\newtheorem{theorem}{Theorem}[section]
\newtheorem{proposition}[theorem]{Proposition}
\newtheorem{lemma}[theorem]{Lemma}
\newtheorem{corollary}[theorem]{Corollary}
\newtheorem{remark}{Remark}[section]
\newtheorem{assumption}[theorem]{Assumption}
\newtheorem{conjecture}[theorem]{Conjecture}
\theoremstyle{definition}
\newtheorem{definition}[theorem]{Definition}
\newcommand{\etal}{\textit{ et al. }}
\newcommand{\Tr}{{\mathrm{Tr}}}
\newcommand{\rank}{\mathrm{Rank}}
\newcommand{\D}{\mathcal D}
\newcommand{\vertiii}[1]{{\left\vert\kern-0.25ex\left\vert\kern-0.25ex\left\vert #1 
    \right\vert\kern-0.25ex\right\vert\kern-0.25ex\right\vert}}
\title{A rigorous justification of the Mittleman's approach to the Dirac--Fock model}
\begin{document}

\author{Long Meng}
\address{Long Meng:
CERMICS, \'Ecole des ponts ParisTech, 6 and 8 av. Pascal, 77455 Marne-la-Vall\'ee, France}
\email{long.meng@enpc.fr}

\keywords{Dirac--Fock model, electron-positron Hatree--Fock model, Mittleman's conjecture, ground state energy.}
\subjclass[2010]{35Q40, 46N50, 81V55}

\maketitle
\begin{abstract}
In this paper, we study the relationship between the Dirac--Fock model and the electron-positron Hartree--Fock model. We justify the Dirac--Fock model as a variational approximation of QED when the vacuum polarization is neglected and when the fine structure constant $\alpha$ is small and the velocity of light $c$ is large. As a byproduct, we also prove, when $\alpha$ is small or $c$ is large, the no-unfilled shells theory in the Dirac--Fock theory for atoms and molecules. The proof is based on some new properties of the Dirac--Fock model.
\end{abstract}

\section{Introduction}

Electrons in heavy atoms experience significant relativistic effects. It is widely believed that QED yields such a description. This paper addresses a conjecture due to Mittleman \cite{mittleman1981theory}: the Dirac--Fock model can be interpreted as a mean-filed approximation of Quantum Electrodynamics (QED) when the vacuum polarization is neglected and when the fine structure constant $\alpha$ is small and the velocity of light $c$ is large. More precisely, we prove that the error bound between the Dirac--Fock ground state energy and a max-min problem coming from the electron-positron Hartree--Fock model is of the size $\mathcal{O}(\frac{\alpha^2}{c^4})$.

In computational chemistry, the DF model firstly introduced in \cite{swirles1935relativistic} is frequently used. It is a variant of the Hartree--Fock model in which the kinetic energy operator $-\frac{1}{2}\Delta$ is replaced by the free Dirac operator $\D$. Even though in principle it is not physically meaningful, this approach gives better results that are in excellent agreement with experience data (see, e.g., \cite{desclaux1973relativistic,gorceix1987multiconfiguration}). Contrary to the models in QED, the DF functional is not bounded from below. The rigorous definition of ground state energy is thus delicate. Based on the critical point theory, rigorous existence results for solutions to the DF equations can be found in \cite{esteban1999solutions,paturel2000solutions}.  Then in \cite{esteban2001nonrelativistic}, a definition of the ground state energy based on the wavefunctions is proposed:
\begin{equation}\label{DF}
    E_{w,q}=\min_{\substack{ \Phi\in G_q(H^{1/2})\\ \Phi\textrm{ solution of } DF \textrm{ equations}}}\mathcal{E}(\gamma_\Phi).
\end{equation}
Here $\mathcal{E}$ is the DF functional defined by \eqref{eq:DFfunctional}, $\gamma_\Phi$ is the density matrix associated with $\Phi:=(u_1,\cdots,u_q)\in G_q$ defined by \eqref{eq:den-w}, and $\D_{\gamma_\Phi}$ is the DF operator defined by \eqref{eq:DF}. The space $G_q$ is the functional space presenting the wavefunctions of $q$ electrons and is a Grassmannian manifold defined by
\[
G_q(H^{1/2}):=\{G\textrm{ subspace of } H^{1/2}(\mathbb{R}^3;\mathbb{C}^4);\,\dim_\mathbb{C}(G)=q\}
\]
where $q$ is the number of electrons, and  the solutions of DF equation take the form
\begin{align*}
    \D_{\gamma_{\Phi}} u_j =\epsilon_j u_j,\qquad \epsilon_j\in (0,c^2).
\end{align*}

In addition, the deficiency that the DF Hamiltonian is not bounded from below also raises questions about its physical derivation: one would like to show that the DF model or its refined variants can be interpreted as an approximation of QED (c.f., \cite{mittleman1981theory,sucher1980foundations,chaix1989quantum} and the references therein). However, this theory leads to divergence problems: it is not easy to give meaning to the quantities (energy of the vacuum, charge density of the vacuum) appearing in QED. Steps in the direction of a rigorous justification are presently undertaken by considering the electron-positron Hartree--Fock model (ep-HF). This model is an approximation of QED with the vacuum polarization being neglected (see e.g., \cite[Section 4.5]{Variationalmethods}). Indeed, the vacuum polarization is of the size $\mathcal{O}(\frac{1}{c^3})$ which is small compared with the relativistic effect (of the size $\mathcal{O}(\frac{1}{c^2})$).

In the spirit of Mittleman \cite{mittleman1981theory}, when the vacuum polarization is neglected, the real physical ground state energy $e_q$ should be obtained by maximizing the ground state energy of the ep-HF model over all allowed one-particle electron subspace (see Formula \eqref{min-QED}). A version of the so-called \textit{Mittleman's conjecture} says that the corresponding ground state energy is equal to the ground state energy of the DF model. From a mathematical viewpoint, Mittleman's conjecture has been investigated. When the atomic shells are filled and the electron-electron interaction is weak \cite{barbaroux2005some,huber2007solutions} or in the case of hydrogen \cite{barbaroux2005remarks}, it works very well. However, all other cases are unknown.

The main result of this work (Theorem \ref{th:error bound}) is to answer this question in any case: when $\alpha$ is small and $c$ is large and when the vacuum polarisation is neglected, the DF ground state energy is an approximation of the physical ground state energy $e_q$ with a difference of the size $\mathcal{O}(\frac{\alpha^2}{c^4})$. In addition, the DF model is also a good approximation of QED even though the vacuum polarization is taken into account (see Remark \ref{rem:QED}).

Our strategy of proof involves some insight into the properties of the DF model. Recently S\'er\'e \cite{sere} redefined the DF ground state energy in the density matrix' framework (see Formula \eqref{eq:DFfunctional} and \eqref{eq:min}). The state of electrons in the DF theory is located in a subset $\Gamma_q^+$ of the density matrix in which any density matrix $\gamma$ satisfies $P^+_{\gamma}\gamma P^+_{\gamma}=\gamma$ with $P^+_{\gamma}=\mathbbm{1}_{(0,+\infty)}(\D_{\gamma})$. S\'er\'e's proof is based on a new retraction technique: he builds a retraction map $\theta$ that maps a subset of the density matrix into $\Gamma_q^+$. Thus, the difficulty of the nonlinear constraint $\gamma=P^+_{\gamma}\gamma P^+_{\gamma}$ is converted into the complexity of the structure of a new functional $\mathcal{E}(\theta(\cdot))$. Before going further, we review in Section \ref{sec:model} the basic definitions and results of the DF model, the ep-HF model, and Mittleman's definition of the ground state energy.

With the retraction map $\theta$ in hand, we show that for any pure electronic state $\gamma$ in ep-HF, the functional $\mathcal{E}(\theta(\gamma))$ is an approximation of the functional $\mathcal{E}(\gamma)$ when $\alpha$ is small or $c$ is large (i.e., Theorem \ref{main theorem}). Thus the DF model is expected to have the same properties as the ep-HF model when $\alpha$ is small or $c$ is large:
\begin{itemize}
    \item The functional $\mathcal{E}(\theta(\cdot))$ has a second-order expansion (i.e., Proposition \ref{approx});
    \item There are no unfilled shells in the DF theory (i.e., Theorem \ref{th:unfill}).
\end{itemize}
Therefore, any ground state energy of the DF model in the framework of density matrix (i.e., \eqref{eq:min}) is equivalent to the one in the framework of the wavefunction (i.e., \eqref{DF}) when $\alpha$ is small or $c$ is large (i.e., Corollary \ref{cor:equiv}).

Thanks to the equivalence of these two definitions of the DF ground state energy, the DF model is justified in Section \ref{sec:DF<ep-HF}: the DF ground state energy is an approximation of the ground state energy in QED when $\alpha$ is small and $c$ is large and when the vacuum polarisation is neglected

Finally, we give the technical proof of the error bound estimate between $\mathcal{E}(\cdot)$ and $\mathcal{E}(\theta(\cdot))$ (i.e., Theorem \ref{main theorem}) in Section \ref{sec:proof2}. In Appendix, we recall some basic inequalities used in \cite{sere} and prove the boundedness of the eigenfunctions of the DF operator and the DF type operator associated with the ep-HF model.

\section{Models, Mittleman's conjecture and main results}\label{sec:model}

For a particle of mass $m=1$, the free Dirac operator is defined by 
\[
\D=-ic\sum_{k=1}^3\alpha_k\partial_k+c^2\beta\]
with the velocity of light $c$ and $4\times4$ complex matrix $\alpha_1,\alpha_2,\alpha_3$ and $\beta$, whose standard forms are:
\[
\beta=\begin{pmatrix} 
\mathbbm{1}_2 & 0 \\
0 & -\mathbbm{1}_2 
\end{pmatrix},\,
\alpha=\begin{pmatrix}
0&\sigma_k\\
\sigma_k&0
\end{pmatrix},
\]
where $\mathbbm{1}_2$ is the $2\times 2$ identity matrix and the $\sigma_k$'s, for $k\in\{1,2,3\}$, are the well-known $2\times2$ Pauli matrix 
\[
\sigma_1=\begin{pmatrix}
0&1\\
1&0
\end{pmatrix},\,
\sigma_2=\begin{pmatrix}
0&-i\\
i&0
\end{pmatrix},\,
\sigma_3=\begin{pmatrix}
1&0\\
0&-1
\end{pmatrix}.
\]
The operator $\D$ acts on $4-$spinors; that is, on functions from $\mathbb{R}^3$ to $\mathbb{C}^4$. It is self-adjoint in $\mathcal{H}:=L^2(\mathbb{R}^3;\mathbb{C}^4)$, with domain $H^1(\mathbb{R}^3;
\mathbb{C}^4)$ and form domain $H^{1/2}(\mathbb{R}^3;\mathbb{C}^4)$ (denoted by $H^1$ and $H^{1/2}$ in the following, when there is no ambiguity). Its spectrum is $\sigma(\D)=(-\infty,-c^2]\cup[+c^2,+\infty)$. Following the notation in \cite{esteban1999solutions,paturel2000solutions}, we denote by $\Lambda^+$ and $\Lambda^-=\mathbbm{1}_{\mathcal{H}}-\Lambda^+$ respectively the two orthogonal projectors on $\mathcal{H}$ corresponding to the positive and negative eigenspaces of $\D$; that is
\[
\begin{cases}
\D\Lambda^+=\Lambda^+\D=\Lambda^+\sqrt{c^4-c^2\Delta}=\sqrt{c^4-c^2\Delta}\,\Lambda^+;\\
\D\Lambda^-=\Lambda^-\D=-\Lambda^-\sqrt{c^4-c^2\Delta}=-\sqrt{c^4-c^2\Delta}\,\Lambda^-.
\end{cases}
\]
Throughout the paper, $\mathcal{B}(W,Y)$ is the space of bounded linear maps from a Banach space $W$ to a Banach space $Y$, equipped with the norm 
\[
\|A\|_{\mathcal{B}(W,Y)}:=\sup_{u\in W,\,\|u\|_W=1}\|Au\|_Y.
\]
We denote $\mathcal{B}(W):=\mathcal{B}(W,W)$. The functional space $\sigma_1:=\sigma_1(\mathcal{H})$ is defined by
\begin{align*}
    \sigma_1:=\{\gamma\in\mathcal{B}(\mathcal{H});\Tr(|\gamma|)<+\infty\},
\end{align*}
endowed with the norm
\begin{align*}
    \|\gamma\|_{\sigma_1}:=\Tr(|\gamma|).
\end{align*}
We also define
\[
X^s:=\{\gamma\in\mathcal{B}({\mathcal{H}}); \gamma=\gamma^*, (1-\Delta)^{s/4}\gamma(1-\Delta)^{s/4}\in\sigma_1\},
\]
endowed  with the norm
\[
 \|\gamma\|_{X^s}:=\|(1-\Delta)^{s/4}\gamma(1-\Delta)^{s/4}\|_{\sigma_1}.
\]
In particular, we denote $X:=X^1$ and $Y:=X^2$.  For any $\gamma\in X$, we also introduce the following $c$-dependent norm:
\[
\|\gamma\|_{X_c}:=\||\D|^{1/2}\gamma|\D|^{1/2}\|_{\sigma_1}=\|(c^4-c^2\Delta)^{1/4}\gamma(c^4-c^2\Delta)^{1/4}\|_{\sigma_1}.
\]

For every density matrix $\gamma\in X$, there exists a complete set of eigenfunctions $(u_n)_{n\geq 1}$ of $\gamma$ in $\mathcal{H}$, corresponding to the non-decreasing sequence of eigenvalues $(\lambda_n)_{n\geq 1}$ (counted with their multiplicity) such that $\gamma$ can be rewritten as
\[
\gamma=\sum_{n\geq 1}\lambda_n |u_n \rangle\,\langle u_n|
\]
where $|u \rangle\,\langle u|$ denotes the projector onto the vector space spanned by the function $u$. The kernel $\gamma(x,y)$ of $\gamma$ reads as
\begin{align*}
    \gamma(x,y)=\sum_{n\geq 1}\lambda_n u_n(x)\otimes  u_n^*(y)
\end{align*}
The one-particle density associated with $\gamma$ is
\[
\rho_{\gamma}(x):=\Tr_{\mathbb{C}^4}[\gamma(x,x)]=\sum_{n\geq 1}\lambda_n |u_n(x)|^2,
\]
where the notation $\Tr_4$ stands for the trace of a $4\times4$ matrix. The density matrix $\gamma_{\Phi}$ corresponding to the wavefunctions of $q$ electrons $\Phi:=(u_1,\cdots,u_q)\in G_q(H^{1/2})$ can be written as
\begin{equation}\label{eq:den-w}
    \gamma_\Phi:=\sum_{n=1}^q|u_n \rangle\,\langle u_n|.
\end{equation}
For any $\gamma\in X$, the self-consistent DF operator is defined by
\begin{align}\label{eq:DF}
    \D_\gamma:=\D- V+\alpha W_\gamma
\end{align}
where for any $\psi\in H^{1/2}$,
\[
W_{\gamma}\psi(x)=(\rho_{\gamma}*W)\psi(x)-\int_{\mathbb{R}^3}W(x-y)\gamma(x,y)\psi(y)dy.
\]
Here $V$ is the attractive potential between the nuclei and the electrons, and $W$ is the repulsive potential between the electrons. We  consider the electrostatic case $W= \frac{1}{|x|}$ and $V=-\mu*\frac{1}{|x|}$ with a nonnegative nuclear charge distribution $\mu\in \mathcal{M}_+(\mathbb{R}^3)$ satisfying $\int_{\mathbb{R}^3} d\mu=Z$. 

The DF functional is 
\begin{align}\label{eq:DFfunctional}
    \mathcal{E}(\gamma):&=\Tr[(\D-V) \gamma]+\frac{\alpha}{2}\iint_{\mathbb{R}^3\times \mathbb{R}^3}\frac{\rho_{\gamma}(x)\rho_{\gamma}(y)-\Tr_{\mathbb{C}^4}(\gamma(x,y)\gamma(y,x))}{|x-y|}\;dxdy-c^2\Tr (\gamma)\notag\\
&=\Tr(\D_{\gamma} \gamma)-\frac{\alpha}{2}\Tr (W_{\gamma}\gamma)-c^2\Tr (\gamma).
\end{align}
\begin{remark}
Note that, for standard DF theory and in the system of units $\hbar=c=1$, the so-called fine-structure constant is $\alpha\approx \frac{1}{137}$, and $Z$ should be replaced by $\alpha Z$. Here we are rather interested in the case where $\alpha$ is small or $c$ is large.
\end{remark}

For future convenience, we denote $\alpha_c:=\frac{\alpha}{c}$ and $Z_c:=\frac{Z}{c}$.

\subsection{The Dirac--Fock ground state energy} 
Let $q$ be the number of electrons and let 
\[
\Gamma:=\{\gamma\in X; 0\leq \gamma\leq  \mathbbm{1}_{\mathcal{H}}\},\quad \Gamma_{q}=:\{\gamma\in \Gamma; \Tr(\gamma)\leq q\}.
\]
Let 
\[
P^+_{\gamma}=\mathbbm{1}_{(0,+\infty)}(\D_{\gamma}),\quad  P^-_{\gamma}=\mathbbm{1}_{(-\infty,0)}(\D_{\gamma}).
\]
In the DF theory, the relevant set of electronic states is defined by
\[
\Gamma_{q}^+:=\{\gamma\in\Gamma_q; P^+_\gamma\gamma P^+_\gamma=\gamma\}.
\]
According to \cite{sere}, the ground state energy of the DF model can be redefined by 
\begin{align}\label{eq:min}
    E_q:=\min_{\gamma\in \Gamma_{q}^+}\mathcal{E}(\gamma).
\end{align}
The existence of a ground state is guaranteed by the following.
\begin{theorem}[Existence of a ground state in the DF theory; Theorem 1.2 in \cite{sere}]\label{th:1} Let $\alpha q\leq Z$. Under Assumption \ref{ass:1} below, the minimum problem \eqref{eq:min} admits a minimizer $\gamma_* \in \Gamma_q^+$. In addition, $\Tr(\gamma_*)=q$, and any such minimizer can be written as
\begin{align}\label{eq:gamma}
    \gamma_*=\mathbbm{1}_{(0,\nu)}(\D_{\gamma_*})+\delta
\end{align}
with $
0< \delta\leq \mathbbm{1}_{\{\nu\}}(\D_{\gamma_*})$ for some $\nu\in (0,c^2]$. When $\alpha q<Z$, $\nu\in (0,c^2)$.
\end{theorem}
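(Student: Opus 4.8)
This is Theorem~1.2 of \cite{sere}; I outline the strategy one follows. The essential difficulty is that $\Gamma_q^+$ is cut out by the \emph{nonlinear} self-consistency constraint $P^+_\gamma\gamma P^+_\gamma=\gamma$, and $\gamma\mapsto P^+_\gamma=\mathbbm{1}_{(0,+\infty)}(\D_\gamma)$ is in general neither continuous nor differentiable, so a naive direct method fails: this constraint need not survive weak-$*$ limits. The plan is to use S\'er\'e's \textbf{retraction map}: one constructs, on an open neighbourhood $\mathcal U\subset X$ of $\Gamma_q^+$, a $C^1$ map $\theta:\mathcal U\to\Gamma_q^+$ with $\theta|_{\Gamma_q^+}=\mathrm{id}$ and $\Tr\theta(\gamma)=\Tr\gamma$, obtained by a contraction/fixed-point argument for an iteration of the form $\gamma\mapsto P^+_{(\cdot)}\gamma P^+_{(\cdot)}$ followed by a trace-restoring rescaling; it converges because Assumption~\ref{ass:1} forces $\D_\gamma$ to have uniform spectral gaps near $0$ and near $\pm c^2$, so that $\gamma\mapsto P^+_\gamma$ is uniformly Lipschitz on $\mathcal U$. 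Since $\theta$ is valued in $\Gamma_q^+$ and restricts to the identity there, $E_q=\inf_{\gamma\in\mathcal U}\mathcal E(\theta(\gamma))$: the hard constraint is traded for the complexity of the regular functional $\mathcal E\circ\theta$ on the simple domain $\mathcal U$.

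Next I would run the direct method on $\mathcal E\circ\theta$. For coercivity, on $\Gamma_q^+$ the positive-energy constraint yields $\Tr(\D_\gamma\gamma)=\Tr(|\D_\gamma|^{1/2}\gamma|\D_\gamma|^{1/2})\gtrsim\|\gamma\|_{X_c}$, while under Assumption~\ref{ass:1} (smallness of $\alpha_c$ and $Z_c$) the potential term $V$, the direct Coulomb term and the exchange term are lower order, controlled by the Hardy/Lieb--Thirring/Cauchy--Schwarz-type estimates recalled in the appendix; hence $\mathcal E(\gamma)+C\gtrsim\|\gamma\|_{X_c}$ and $\mathcal E$ is bounded below on $\Gamma_q^+$. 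A minimizing sequence $(\gamma_n)$ is then bounded in $X$, so up to extraction $\gamma_n\rightharpoonup\gamma_*$ weak-$*$ in $X$, with $0\le\gamma_*\le\mathbbm{1}_{\mathcal H}$ and $\Tr\gamma_*\le q$; one passes to the inferior limit using weak-$*$ lower semicontinuity of the kinetic and direct Coulomb parts of $\mathcal E$ (the exchange term is handled once local compactness is secured, see below).

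The \textbf{main obstacle} is to rule out loss of mass at infinity, i.e.\ to show $\Tr\gamma_*=q$ and $\gamma_*\in\Gamma_q^+$ (equivalently $\gamma_*=\theta(\gamma_*)$). This is where $\alpha q\le Z$ enters, through an HVZ/concentration--compactness argument: one proves the strict binding inequalities $E_q<E_{q-k}+\Sigma_k$ for $1\le k\le q$, where $\Sigma_k$ is the energy cost of sending $k$ electrons to infinity, which holds because the total nuclear charge $Z\ge\alpha q$ can still bind the escaping electrons; strictness follows by exhibiting a trial state that relocates a piece of $\gamma$ near the nuclei and lowers the energy. This excludes vanishing and dichotomy, so $\gamma_n\to\gamma_*$ strongly enough in $X$ that $\Tr\gamma_*=q$, the exchange term converges, and continuity of $\theta$ on $\mathcal U$ gives $\gamma_*\in\Gamma_q^+$ and minimality.

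It remains to establish the structure \eqref{eq:gamma}. Writing the first-order optimality condition for $\mathcal E\circ\theta$ at $\gamma_*$ and unfolding it on $\Gamma_q^+$ amounts, to leading order, to minimizing $\Tr((\D_{\gamma_*}-c^2)\gamma)$ over $0\le\gamma\le\mathbbm{1}_{\mathcal H}$, $\Tr\gamma=q$, $\gamma=P^+_{\gamma_*}\gamma P^+_{\gamma_*}$; a bathtub-principle argument then shows the minimizer fills the lowest positive eigenvalues of $\D_{\gamma_*}$ up to a Fermi level $\nu\in(0,c^2]$ with at most one partially occupied top shell, i.e.\ $\gamma_*=\mathbbm{1}_{(0,\nu)}(\D_{\gamma_*})+\delta$ with $0<\delta\le\mathbbm{1}_{\{\nu\}}(\D_{\gamma_*})$. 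Finally, when $\alpha q<Z$ the problem is strictly subcritical, and the same strict binding inequality prevents the Fermi level from reaching the bottom $c^2$ of the essential spectrum of $\D_{\gamma_*}$, giving $\nu\in(0,c^2)$.
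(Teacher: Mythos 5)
The paper does not prove this statement at all: it is imported verbatim as Theorem~1.2 of \cite{sere}, so the only fair comparison is with S\'er\'e's argument, whose machinery (the retraction $\theta$, Lemma~\ref{lem:retra}, Lemma~\ref{lem:sigma}) the paper reproduces. Your sketch gets the outer scaffolding right — trade the nonlinear constraint $P^+_\gamma\gamma P^+_\gamma=\gamma$ for the smoother functional $\mathcal E\circ\theta$ on $\mathcal U_R$, then extract a minimizer and read off the shell structure from first-order optimality — but two points do not survive scrutiny. First, you misdescribe the retraction: as defined here (and in \cite{sere}), $\theta(\gamma)=\lim_n T^n(\gamma)$ with $T(\gamma)=P^+_\gamma\gamma P^+_\gamma$, there is no trace-restoring rescaling and $\Tr\theta(\gamma)=\Tr\gamma$ is false in general ($T$ only decreases the trace); also $\gamma\mapsto P^+_\gamma$ is Lipschitz on $\Gamma_q$ under $\kappa(\alpha,c)<1$ (estimate \eqref{eq:P-P}), which is exactly what makes the contraction work, so the motivation you give ("neither continuous nor differentiable") is off.

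Second, and more seriously, your compactness step is a genuine gap. You rule out loss of mass by asserting HVZ-type strict binding inequalities $E_q<E_{q-k}+\Sigma_k$ "because $Z\ge\alpha q$ can still bind the escaping electrons"; such inequalities are notoriously delicate even in nonrelativistic Hartree--Fock and are nowhere available here — nothing in the paper or in \cite{sere} provides them, and the one-line trial-state justification does not produce strictness. The actual mechanism is different and lighter: the constraint in $\Gamma_q$ is the relaxed inequality $\Tr\gamma\le q$, so the admissible set is stable under weak limits and no dichotomy/vanishing analysis is needed for existence; the equality $\Tr(\gamma_*)=q$ (and, when $\alpha q<Z$, the bound $\nu<c^2$) is then recovered \emph{a posteriori} from the spectral fact that $\D_{\gamma}$ has at least $q$ eigenvalues in $[0,c^2-\tau]$ for every $\gamma\in\Gamma_q$ (the paper's Lemma~\ref{lem:sigma}, i.e.\ \cite[Lemma 3.4]{sere}): if $\Tr(\gamma_*)<q$ one adds occupation in an unfilled eigenstate with eigenvalue $\nu<c^2$, and the term $-c^2\Tr\gamma$ in \eqref{eq:DFfunctional} makes this strictly lower the energy, contradicting minimality. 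This is also where $\alpha q\le Z$ enters — through the existence of enough bound states of the mean-field operator — not through binding inequalities. Your appeal to "continuity of $\theta$" to pass the constraint $\gamma_*\in\Gamma_q^+$ to the limit has the same circularity: $\theta$ is continuous for the $X_c$-norm, not weakly, and the strong convergence you invoke was to come from the unproven binding inequalities. The final bathtub/Fermi-level step and the role of $\alpha q<Z$ for $\nu<c^2$ are described correctly in spirit.
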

\begin{remark}
In fact $\delta=0$ is possible. But for future convenience, in this paper, we set $\delta>0$. If $\delta=0$, then there is a value $\nu'<\nu$ such that $\gamma_*=\mathbbm{1}_{(0,\nu']}(\D_{\gamma_*})$. 

Otherwise, $\mathbbm{1}_{(0,\nu']}(\D_{\gamma_*})<\gamma_*$ for any $\nu'<\nu$. Then, for any $0<\nu'<\nu$, there is a value $\nu''\in (\nu',\nu)$ such that $\nu''$ is an eigenvalue of the operator $\D_{\gamma_*}$. This implies that $\nu$ is an accumulation point of the spectrum and there are infinitely many positive eigenvalues of $\D_{\gamma_*}$ in $(0,\nu)$. Thus, $\Tr(\gamma_*)\geq \Tr[\mathbbm{1}_{(0,\nu)}(\D_{\gamma_*})]=+\infty$ which contradicts with the fact $\Tr(\gamma_*)\leq q$.
\end{remark}

\begin{assumption}\cite[Theorem 1.2 and Remark 1.3]{sere}\label{ass:1}
Let $\kappa(\alpha,c):=2(\alpha_c q+ Z_c)$. Assume that 
\begin{enumerate}
    \item  $\kappa(\alpha,c)< 1-\frac{\pi}{4}\alpha_c q$;
    \item  there exists $R>0$ such that : 
    \[
    (1-\kappa(\alpha,c)-\frac{\pi}{4}\alpha_cq)^{-1/2}q< R <\frac{2\sqrt{(1-\kappa(\alpha,c))\lambda_0(\alpha,c)}}{\pi\alpha_c}
    \]
\end{enumerate}
where $\lambda_0(\alpha,c):= (1-\max(\alpha_c q,Z_c))$.
\end{assumption}
The proof of Theorem \ref{th:1} is based on a retraction $\theta(\gamma)$ which is defined by 
\[
\theta(\gamma):=\lim_{n\rightarrow +\infty} T^n(\gamma)
\]
with
\[
T^n(\gamma)=T(T^{n-1}(\gamma)),\quad T(\gamma)=P^+_{\gamma}\gamma P^+_{\gamma}.
\]
The existence of the retraction $\theta$ is based on the following.
\begin{lemma}[Existence of the retraction; Proposition 2.1 and Proposition 2.9 in \cite{sere}]\label{lem:retra}
Assume that $\kappa(\alpha,c)<1$ and $a(\alpha,c):=\frac{\pi\alpha_c}{4\sqrt{(1-\kappa(\alpha,c))\lambda_0(\alpha,c)}}$. Given $R<\frac{1}{2a(\alpha,c)}$, let $A(\alpha,c):=\max(\frac{1}{1-2a(\alpha,c)R},\frac{2+a(\alpha,c)q}{2})$, and let
\[
\mathcal{U}_R:=\{\gamma\in\Gamma_q; \frac{1}{c}\|\gamma|\D|^{1/2}\|_{\sigma_1}+\frac{A(\alpha,c)}{c^2}\|T(\gamma)-\gamma\|_{X_c}< R\}.
\]
Then, $T$ maps $\mathcal{U}_R$ into $\mathcal{U}_R$, and for any $\gamma\in\mathcal{U}_R$ the sequence $(T^n(\gamma))_{n\geq 0}$ converges to a limit $\theta(\gamma)\in\Gamma_q^+$. Moreover for any $\gamma\in \mathcal{U}_R$,
\begin{equation}\label{eq:retra}
    \begin{aligned}
    \|T^{n+1}(\gamma)-T^{n}(\gamma)\|_{X_c}&\leq L(\alpha,c)\|T^n(\gamma)-T^{n-1}(\gamma)\|_{X_c},\\
    \|\theta(\gamma)-T^n(\gamma)\|_{X_c}&\leq\frac{L(\alpha,c)^n}{1-L(\alpha,c)}\|T(\gamma)-\gamma\|_{X_c}
\end{aligned}
\end{equation}
with $L(\alpha,c)=2a(\alpha,c)R$.
\end{lemma}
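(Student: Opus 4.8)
\emph{Proof idea.} The plan is to run the Picard/Banach fixed point argument for $T$ in the complete normed space $(X,\|\cdot\|_{X_c})$ --- note that $\mathcal U_R\subset\Gamma_q\subset X$, so every iterate automatically has finite $X_c$-norm --- and then to recognise the limit as an element of $\Gamma_q^+$. Three facts are needed: (a) under $\kappa(\alpha,c)<1$, the operator $\D_\gamma$ has $0$ in its resolvent set for every $\gamma\in\Gamma_q$, so that $P^+_\gamma=\mathbbm{1}_{(0,+\infty)}(\D_\gamma)=\frac{1}{2\pi i}\oint_{\mathcal C}(z-\D_\gamma)^{-1}\,dz$ is a genuine Riesz projection for a contour $\mathcal C$ enclosing $\sigma(\D_\gamma)\cap(0,+\infty)$ and bounded away from $0$, and $T(\gamma)=P^+_\gamma\gamma P^+_\gamma$ satisfies $0\le T(\gamma)\le\mathbbm{1}_{\mathcal{H}}$, $\Tr(T(\gamma))\le\Tr(\gamma)\le q$, and $T(\gamma)\in X$; (b) $T$ maps $\mathcal U_R$ into itself; (c) for every $\gamma\in\mathcal U_R$, $\|T(T(\gamma))-T(\gamma)\|_{X_c}\le L(\alpha,c)\,\|T(\gamma)-\gamma\|_{X_c}$ with $L(\alpha,c)=2a(\alpha,c)R$, and the analogous bound holds between any two consecutive iterates. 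Granting (a)--(c): iterating (c) yields the first line of \eqref{eq:retra}, summing the geometric series $\sum_{k\ge n}L(\alpha,c)^k$ yields the second and shows $(T^n(\gamma))_{n\ge0}$ is Cauchy for $\|\cdot\|_{X_c}$, hence convergent in $X$ to some $\theta(\gamma)$; the limit still satisfies $0\le\theta(\gamma)\le\mathbbm{1}_{\mathcal{H}}$, and $\Tr(\theta(\gamma))\le q$ because $|\Tr(\gamma_1-\gamma_2)|\le c^{-2}\|\gamma_1-\gamma_2\|_{X_c}$; finally the Lipschitz bound in (c) makes $T$ continuous, so $T(\theta(\gamma))=\lim_n T^{n+1}(\gamma)=\theta(\gamma)$, i.e. $P^+_{\theta(\gamma)}\theta(\gamma)P^+_{\theta(\gamma)}=\theta(\gamma)$, which is precisely $\theta(\gamma)\in\Gamma_q^+$.

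For (a), the workhorse is Kato's inequality $\frac{1}{|x|}\le\frac{\pi}{2c}\,|\D|$ as quadratic forms: with $\|\rho_\gamma\|_{L^1}=\Tr(\gamma)\le q$ for the direct part of $W_\gamma$ and a Cauchy--Schwarz/Hardy bound on the exchange kernel, it gives that $-V+\alpha W_\gamma$ is $|\D|$-form-bounded with relative bound at most $\frac{\pi}{4}(\alpha_c q+Z_c)$, hence strictly below $1$ once $\kappa(\alpha,c)=2(\alpha_c q+Z_c)<1$. A Schur-complement (Feshbach) computation in the splitting $\mathcal H=\Lambda^+\mathcal H\oplus\Lambda^-\mathcal H$ then places the imaginary axis, and a contour $\mathcal C$ encircling the positive spectrum at distance of order $c^2\lambda_0(\alpha,c)$ from $0$, in the resolvent set of $\D_\gamma$, with a uniform bound on $\||\D|^{1/2}(z-\D_\gamma)^{-1}|\D|^{1/2}\|$ along $\mathcal C$; the factor $\sqrt{(1-\kappa(\alpha,c))\lambda_0(\alpha,c)}$ in the denominator of $a(\alpha,c)$ is exactly the record of this resolvent bound and of the size of the spectral gap.

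The core of (b) and (c) is the Lipschitz continuity of $\gamma\mapsto P^+_\gamma$. Since $\D_{\gamma_1}-\D_{\gamma_2}=\alpha(W_{\gamma_1}-W_{\gamma_2})$, the second resolvent identity gives
\[
P^+_{\gamma_1}-P^+_{\gamma_2}=\frac{1}{2\pi i}\oint_{\mathcal C}(z-\D_{\gamma_1})^{-1}\,\alpha(W_{\gamma_1}-W_{\gamma_2})\,(z-\D_{\gamma_2})^{-1}\,dz,
\]
and Kato's inequality applied to $\rho_{\gamma_1-\gamma_2}$ and to the kernel of $\gamma_1-\gamma_2$ bounds $\alpha(W_{\gamma_1}-W_{\gamma_2})$ relative to $|\D|$ by a fixed multiple of $a(\alpha,c)\,\|\gamma_1-\gamma_2\|_{X_c}$; with the resolvent control from (a) this gives a Lipschitz estimate for $\gamma\mapsto P^+_\gamma$ (in the relevant $|\D|^{1/2}$-weighted operator norm) with constant a multiple of $a(\alpha,c)$. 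The algebraic key to (c) is that, writing $P:=P^+_\gamma$ and $Q:=P^+_{T(\gamma)}$,
\[
T(T(\gamma))-T(\gamma)=Q(P\gamma P)Q-P(P\gamma P)P=(Q-P)(P\gamma P)Q+P(P\gamma P)(Q-P),
\]
so the difference is first order in $Q-P$; bounding its $X_c$-norm by $\|Q-P\|$ times a weighted norm of $P\gamma P$, replacing $\|Q-P\|$ via the Lipschitz estimate by a multiple of $a(\alpha,c)\,\|T(\gamma)-\gamma\|_{X_c}$, and using the first defining inequality $\frac{1}{c}\|\gamma|\D|^{1/2}\|_{\sigma_1}<R$ of $\mathcal U_R$ to control $P\gamma P$, produces $\|T(T(\gamma))-T(\gamma)\|_{X_c}\le 2a(\alpha,c)R\,\|T(\gamma)-\gamma\|_{X_c}$, a genuine contraction since $R<\frac{1}{2a(\alpha,c)}$. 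The same computation between successive iterates gives the general step in \eqref{eq:retra}, provided the iterates stay in $\mathcal U_R$, which is (b).

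For (b) one must check that $\frac{1}{c}\|T(\gamma)|\D|^{1/2}\|_{\sigma_1}+\frac{A(\alpha,c)}{c^2}\|T(T(\gamma))-T(\gamma)\|_{X_c}<R$ whenever $\gamma\in\mathcal U_R$: the second term is handled by (c) together with the summability of $\sum_k L(\alpha,c)^k$ (this is where the branch $\frac{1}{1-2a(\alpha,c)R}$ of $A(\alpha,c)$ enters), while the first is handled because $P^+_\gamma$ commutes with $|\D|^{1/2}$ up to a $|\D|$-bounded error, so $\|T(\gamma)|\D|^{1/2}\|_{\sigma_1}$ grows only by a controlled factor at each step (this, with $\Tr(\gamma)\le q$, is where the branch $\frac{2+a(\alpha,c)q}{2}$ enters). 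The main obstacle is precisely this bookkeeping of weighted trace-class and operator norms in (b)--(c): every factor of $|\D|^{1/2}$ must be balanced so that all constants are uniform in $c$ --- this is why the $c$-dependent norm $\|\cdot\|_{X_c}$ and the quantity $\frac{1}{c}\|\cdot\,|\D|^{1/2}\|_{\sigma_1}$, rather than the plain $X$-norm, must be used throughout --- and the interplay of these two quantities in the definition of $\mathcal U_R$ must be tracked carefully so that $T$ is stable with the stated constants. Everything else is routine.
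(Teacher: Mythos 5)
This lemma is not proved in the paper itself---it is imported from S\'er\'e's Propositions 2.1 and 2.9---and your sketch reconstructs essentially that same argument: well-definedness of $P^+_\gamma$ from the spectral gap (recorded here as \eqref{eq:5.1-5}), Lipschitz continuity of $\gamma\mapsto P^+_\gamma$ via the resolvent representation and Kato/Hardy bounds (i.e.\ \eqref{eq:P-P} and \eqref{eq:DP}), the first-order decomposition of $T^2(\gamma)-T(\gamma)$ leading to the contraction bound \eqref{eq:T-T}, invariance of $\mathcal{U}_R$ via the two branches of $A(\alpha,c)$, and a Banach/Picard iteration with ratio $L(\alpha,c)=2a(\alpha,c)R$ plus continuity of $T$ to identify the limit as an element of $\Gamma_q^+$. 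Your write-up is correct in outline and follows the same route as the cited source; the quantitative steps you assert without derivation are exactly the inequalities the paper quotes in Appendix~\ref{sec:proof1}.
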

In particular, the property that $T(\mathcal{U}_R)\subset\mathcal{U}_R$ is shown in \cite[Proposition 2.9]{sere}. 

\begin{remark}\label{rem:R}
Let $q,R,Z$ be fixed. If $\alpha$ is small or $c$ is large, it is easy to see that $a(\alpha,c)\ll1$. Thus, the condition $R<\frac{1}{2a(\alpha,c)}$ is automatically fulfilled in this case.
\end{remark}
Therefore, one can define the DF energy:
\begin{definition}[DF energy]
Let $\kappa(\alpha,c),\;a(\alpha,c)$, $R$ and $\mathcal{U}_R$ be given as in Lemma \ref{lem:retra}. For any $\gamma\in\mathcal{U}_{R}$, the DF energy of $\gamma$ is defined by
\begin{align}\label{eq:def}
E(\gamma)=\mathcal{E}(\theta(\gamma)).
\end{align}
\end{definition}
According to \cite[Corollary 2.12]{sere}, any minimizer $\gamma_*$ of \eqref{eq:min} is located in $\mathcal{U}_R$ whenever Assumption \ref{ass:1} is satisfied under Assumption \ref{ass:1} on $\alpha,c$ and $R$. As a result, under Assumption \ref{ass:1},
\[
E_q=\min_{\gamma\in \mathcal{U}_{R}}E(\gamma).
\]

\subsection{Electron--positron Hartree--Fock theory}\label{th:HF}
The so-called electron-positron Hartree--Fock variational problem was introduced in the work of Bach\etal and Barbaroux\etal \cite{bach1998stability,bach1999stability,barbaroux2005hartree}: for any given Dirac sea $P^-_g:=1-P^+_{g}=\mathbbm{1}_{(-\infty,0)}(\D_{g})$ with $g\in X$, the set of electronic states in the ep-HF theory associated with the Dirac sea $P^-_g$ is defined by
\[
\Gamma^{(g)}_q:=\{\gamma\in X;\;-P^-_g\leq \gamma\leq P^+_g, \; P^+_g \gamma P^-_g=0,\; 0\leq \Tr (\gamma)\leq q\}.
\]
According to \cite[Lemma 3.7]{barbaroux2005hartree}, for any $g\in X$, the ep-HF ground state energy associated with the Dirac sea $P^-_g$ can be defined by
\begin{equation}\label{eq:min-HF}
    e^{(g)}_q:=\min_{\gamma\in \Gamma^{(g)}_q}\mathcal{E}(\gamma).
\end{equation}
The existence of the ground state in the ep-HF theory is proved in \cite[Theorem 3.9, Theorem 4.3 and Theorem 4.6]{barbaroux2005hartree}.
\begin{theorem}[Existence of the ground state in the ep-HF theory]\label{th:2}
Assume $g \in X$, $q\in \mathbb{N}$, $\alpha q\leq Z <\frac{\sqrt{3}}{2}c$ and 
\begin{align}\label{eq:ass-th2}
    \pi \alpha_c(1/4+\max\{\Tr(g), q\})+4\alpha_c\Tr(g)< \mu_{Z_c}.
\end{align}
Here $\mu_{Z_c}$ is a non-negative constant for any $Z_c\in [0,\frac{\sqrt{3}}{2})$ defined in Remark \ref{rem:value-v} below. Then the problem \eqref{eq:min-HF} has a minimizer $\gamma^{(g)}\in \Gamma_q^{(g)}$ satisfying $\Tr (\gamma^{(g)})=q$. In addition, there is no unfilled shell, i.e., for some $0< \nu <c^2$, 
\begin{equation}\label{eq:closed-shell}
    \gamma^{(g)}=\mathbbm{1}_{(0,\nu]}(P^+_{g}\D_{\gamma^{(g)}}P^+_{g}).
\end{equation}
\end{theorem}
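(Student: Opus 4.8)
We would prove Theorem~\ref{th:2} by the direct method of the calculus of variations, which here is far more tractable than the Dirac--Fock problem of Theorem~\ref{th:1}: once the Dirac sea $P^-_g$ is frozen, the admissible set $\Gamma^{(g)}_q$ is the intersection of the operator interval $\{-P^-_g\le\gamma\le P^+_g\}$ with the linear constraints $P^+_g\gamma P^-_g=0$ and $0\le\Tr(\gamma)\le q$, hence is convex, and no retraction map is needed. The three points to settle are the coercivity of $\mathcal E$ on $\Gamma^{(g)}_q$ (where the threshold $\mu_{Z_c}$ of Remark~\ref{rem:value-v} and hypothesis~\eqref{eq:ass-th2} enter), the absence of loss of mass at infinity along minimizing sequences, and the closed-shell structure~\eqref{eq:closed-shell}.

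First I would establish a coercivity estimate. Using $P^+_g\gamma P^-_g=0$, split $\gamma=\gamma^+\oplus\gamma^-$ with $0\le\gamma^+\le P^+_g$ and $0\le-\gamma^-\le P^-_g$, and write $\mathcal E(\gamma)=\Tr[(\D-c^2-V)\gamma]+\frac{\alpha}{2}\iint\frac{\rho_\gamma(x)\rho_\gamma(y)}{|x-y|}\,dx\,dy-\frac{\alpha}{2}\iint\frac{|\gamma(x,y)|^2}{|x-y|}\,dx\,dy$. Since $\|P^\pm_g-\Lambda^\pm\|$ is small (a Cauchy-integral/resolvent estimate in terms of $\alpha_c q$ and $Z_c$, as in~\cite{sere}), the free kinetic energy controls $\Tr[|\D|\,|\gamma|]$ on $\Gamma^{(g)}_q$; combined with a relativistic Hardy-type inequality valid in the regime $Z_c<\frac{\sqrt3}{2}$ (and $\alpha_c$ small) to dominate $V$, with the boundedness of the mean-field operator $W_\gamma$ on $H^{1/2}$ controlled by $\|\gamma\|_{X}$, and with a Kato--Seiler--Simon/Cauchy--Schwarz estimate for the exchange term, hypothesis~\eqref{eq:ass-th2} is exactly what makes the net coefficient in front of $\Tr[|\D|\,|\gamma|]$ strictly positive, so that $\mathcal E(\gamma)\ge c_1\|\gamma\|_{X_c}-C(g)$ on $\Gamma^{(g)}_q$. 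In particular $e^{(g)}_q>-\infty$ and every minimizing sequence is bounded in $X$. The same estimate shows that the $\gamma^-$-part of any $\gamma$ costs an energy of order $c^2\Tr|\gamma^-|$, which dominates the $O(\alpha_c)$ mean-field cross terms, so any minimizer has $\gamma^-=0$, i.e.\ $0\le\gamma^{(g)}\le P^+_g$.

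Next, from a minimizing sequence $(\gamma_n)$ extract a subsequence converging in the weak-$\ast$ sense in $X$ to some $\gamma_*$, with $\rho_{\gamma_n}\rightharpoonup\rho_{\gamma_*}$; the constraints defining $\Gamma^{(g)}_q$ pass to the limit, the weak-$\ast$ lower semicontinuity of $\Tr[|\D|\,|\gamma|]$ and the convexity of the direct term handle those terms, and the remaining difficulty is that $-\Tr[V\gamma]$ and the (negative) exchange term are semicontinuous in the right direction only if no mass escapes to infinity (a piece escaping would see no potential and contribute a nonnegative free ep-HF energy). To rule this out I would prove the strict binding inequality $e^{(g)}_q<e^{(g)}_{q'}$ for $1\le q'\le q-1$: because $Z_c<\frac{\sqrt3}{2}$ the one-body operator $P^+_g(\D-V)P^+_g$ has eigenvalues below $c^2$, and because $\alpha q\le Z$ the repulsive mean field generated by the first $q'$ electrons does not push the lowest of them above $c^2$, so adding a well-chosen rank-one projector to a minimizer for $q'$ electrons strictly lowers the energy. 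This excludes the vanishing and dichotomy alternatives of concentration-compactness, yields $\Tr(\gamma_*)=q$ and $\mathcal E(\gamma_*)=e^{(g)}_q$, and so $\gamma_*=:\gamma^{(g)}$ is a minimizer.

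Finally, for~\eqref{eq:closed-shell}: at a minimizer the Euler--Lagrange equation gives $[\gamma^{(g)},H_*]=0$ with $H_*:=P^+_g\D_{\gamma^{(g)}}P^+_g$, and minimality forces the Aufbau form $\gamma^{(g)}=\mathbbm 1_{(0,\nu)}(H_*)+\delta$ with $0\le\delta\le\mathbbm 1_{\{\nu\}}(H_*)$, in the spirit of~\eqref{eq:gamma}. That $\delta$ is in fact a full spectral projector (no \emph{unfilled} shell) follows from a Bach--Lieb--Loss--Solovej-type argument: replacing a partially occupied Fermi level by a genuine projector onto a subspace of the same dimension strictly lowers $\mathcal E$, the strict gain coming from the exchange term and the smallness of $\alpha_c$ in~\eqref{eq:ass-th2} dominating the convex direct contribution. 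Since $H_*$ has essential spectrum $[c^2,+\infty)$ it has only finitely many eigenvalues in $(0,c^2)$, so $\Tr(\gamma^{(g)})=q<+\infty$ gives $\nu<c^2$, while $q\ge1$ gives $\nu>0$. I expect the concentration-compactness step — establishing the strict binding inequality in this relativistic mean-field setting, where the comparison energy involves the free Dirac sea and the non-convex exchange term must be controlled at the same time — to be the main obstacle; the quantitative control of $\|P^\pm_g-\Lambda^\pm\|$ and of $\mu_{Z_c}$ in terms of $(\alpha_c,Z_c)$ needed to close the coercivity step is the secondary technical hurdle.
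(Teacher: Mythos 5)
First, a point of comparison: the paper offers no proof of Theorem~\ref{th:2} at all --- it is imported from \cite{barbaroux2005hartree} (Theorems 3.9, 4.3 and 4.6 there), so your sketch has to be measured against that work rather than against anything internal to this paper. In outline you do reproduce its strategy: direct minimization on the convex set $\Gamma_q^{(g)}$ (no retraction), boundedness from below/coercivity governed by $\mu_{Z_c}$ and hypothesis \eqref{eq:ass-th2}, saturation $\Tr(\gamma^{(g)})=q$ obtained by adding an electron bound by the mean field (this is where $\alpha q\le Z$ enters), and a Bach--Lieb--Loss--Solovej-type argument \cite{PhysRevLett.72.2981} for the closed-shell property \eqref{eq:closed-shell}. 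One structural difference: because the constraint is the relaxed one $0\le\Tr(\gamma)\le q$, the admissible set is weak-$*$ closed, so existence of a minimizer follows from weak lower semicontinuity of the energy on bounded sets alone; no concentration-compactness or strict binding inequalities are needed at that stage. The binding-type argument you propose is only required afterwards, to show that the minimizer actually has trace $q$, and there it takes exactly the rank-one-addition form you describe.

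Two steps of your sketch are misstated. In the no-unfilled-shell argument, no smallness of $\alpha_c$ is needed and nothing has to ``dominate the convex direct contribution'': for $h=|\psi_a\rangle\langle\psi_a|-|\psi_b\rangle\langle\psi_b|$ built from two degenerate Fermi-level states the first-order term vanishes, the ep-HF energy is exactly quadratic along this perturbation, and the second-order term $\frac{\alpha t^2}{2}\Tr[W_h h]$ is identically $-\frac{\alpha t^2}{2}$ times a positive determinant (exchange) integral, direct and exchange combining into a single strictly negative quantity. This exactness is the reason the ep-HF closed-shell theorem needs no smallness assumption, whereas the DF analogue (Theorem~\ref{th:unfill}) does: there an extra $\mathcal{O}(\alpha_c^2)$ error term appears and must be beaten by the same exchange integral. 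Second, your justification of $\nu<c^2$ is incorrect as stated: $P^+_g\D_{\gamma^{(g)}}P^+_g$ generically has \emph{infinitely} many eigenvalues in $(0,c^2)$ accumulating at $c^2$ (the effective potential has an attractive Coulomb tail of charge $Z-\alpha q\ge 0$), so ``finitely many eigenvalues'' is false and, even if true, would not force $\nu<c^2$. The correct argument runs the other way: if $\nu=c^2$, the closed-shell projector $\mathbbm{1}_{(0,\nu]}(P^+_g\D_{\gamma^{(g)}}P^+_g)$ would have infinite trace, contradicting $\Tr(\gamma^{(g)})=q$ --- the same accumulation reasoning used in the remark following Theorem~\ref{th:1}. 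With these two repairs your outline matches the cited proof.
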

\begin{remark}\label{rem:value-v}
In \cite[Lemma A.2]{barbaroux2005hartree} For any $a\in(-\frac{\sqrt{3}}{2},\frac{\sqrt{3}}{2})$, $\mu_{a}$ is the maximal value of $\mu$'s in $[0,C_a^2]$ such that 
\[
\mu+\frac{C^2_{a}a^2}{(C^2_{a}-\mu)}\leq 1
\]
with
\[
C_{a}:=\frac{1}{3}\left(-4|a|+\sqrt{9+4a^2}\right)>0.
\]
\end{remark}

\subsection{Mittleman's ground state energy and relativistic effect}
The set of Dirac seas in the ep-HF theory is defined by
\begin{align*}
    \mathcal{P}=\{P^-_{g}; g\in X\}.
\end{align*}
According to Mittleman \cite{mittleman1981theory}, if the vacuum polarization is neglected, the physical ground state energy is defined by
\begin{equation}\label{min-QED}
    e_q:=\sup_{P^-_g\in \mathcal{P}}e^{(g)}_q.
\end{equation}
One justification of the DF model relies on Mittleman's conjecture. A version is the following.
\begin{conjecture}[Mittleman's conjecture]\label{conj:1} For $\alpha$ small and $c$ large, $E_q=e_q$.
\end{conjecture}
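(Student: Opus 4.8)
The plan is to establish Conjecture~\ref{conj:1} by proving the two inequalities $e_q\ge E_q$ and $e_q\le E_q$ separately. Throughout, $\alpha$ is small or $c$ is large, so that by Theorem~\ref{th:unfill} and Corollary~\ref{cor:equiv} we may fix a DF minimizer $\gamma_*$ which is a rank-$q$ orthogonal projector, $\gamma_*=\mathbbm{1}_{(0,\nu]}(\D_{\gamma_*})$ with $\nu<c^2$ when $\alpha q<Z$, and $E_q=\mathcal E(\gamma_*)$. One checks at once that $\gamma_*\in\Gamma_q^{(\gamma_*)}$, so already $e_q^{(\gamma_*)}\le\mathcal E(\gamma_*)=E_q$; hence the statement reduces to: (i) $\gamma_*$ is a \emph{minimizer} of $\mathcal E$ over $\Gamma_q^{(\gamma_*)}$, which gives $e_q\ge e_q^{(\gamma_*)}=E_q$; and (ii) $e_q^{(g)}\le E_q$ for \emph{every} Dirac sea $P^-_g$, which gives $e_q\le E_q$.

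\emph{Step (i).} For $\gamma\in\Gamma_q^{(\gamma_*)}$ write $\gamma=\gamma^{++}+\gamma^{--}$ for its blocks relative to $P^\pm_{\gamma_*}$, so $0\le\gamma^{++}\le P^+_{\gamma_*}$ and $-P^-_{\gamma_*}\le\gamma^{--}\le 0$ (the cross blocks vanish by definition of $\Gamma_q^{(\gamma_*)}$). Since $\mathcal E$ is the sum of a linear term and the quadratic Coulomb form $\mathcal Q$ appearing in \eqref{eq:DFfunctional}, and $\gamma_*$ commutes with $\D_{\gamma_*}$, one has the exact expansion
\[
\mathcal E(\gamma)-\mathcal E(\gamma_*)=\Tr\big[(\D_{\gamma_*}-c^2)(\gamma-\gamma_*)\big]+\tfrac{\alpha}{2}\,\mathcal Q(\gamma-\gamma_*).
\]
On $\operatorname{ran}P^-_{\gamma_*}$ one has $\D_{\gamma_*}-c^2\le-2c^2$ (under the standing assumptions the negative spectrum of $\D_{\gamma_*}$ is the essential part $(-\infty,-c^2]$, with no bound state below $0$), whence the $\gamma^{--}$-block of the linear term is $\ge 2c^2\|\gamma^{--}\|_{\sigma_1}$; and because $\gamma_*$ fills the $q$ lowest positive eigenvalues of $\D_{\gamma_*}$, a rearrangement (Aufbau) estimate bounds the $\gamma^{++}$-block below by $-(c^2-\epsilon_{q+1})\|\gamma^{--}\|_{\sigma_1}$, with an extra coercive contribution governed by the gap $\epsilon_{q+1}-\nu>0$ when $\gamma^{--}=0$. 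As $c^2-\epsilon_{q+1}$ stays bounded as $c\to\infty$, the linear term is $\ge 0$ and coercive; the correction satisfies $\tfrac{\alpha}{2}|\mathcal Q(\gamma-\gamma_*)|\le C\alpha\|\gamma-\gamma_*\|_X^2$, and, using the $X$-boundedness of the sublevel set $\{\mathcal E\le E_q+1\}$ furnished by the existence theory behind Theorem~\ref{th:2}, the linear term absorbs it for $\alpha$ small or $c$ large. Hence $\mathcal E(\gamma)\ge\mathcal E(\gamma_*)$, proving (i).

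\emph{Step (ii).} Here one must show that the self-consistent sea $P^-_{\gamma_*}$ maximizes $g\mapsto e_q^{(g)}$. If $\operatorname{ran}\gamma_*\subseteq\operatorname{ran}P^+_g$, then $\gamma_*\in\Gamma_q^{(g)}$ and $e_q^{(g)}\le E_q$ directly. Otherwise, for $P^+_g$ close to $P^+_{\gamma_*}$, let $N_g$ be Kato's intertwining unitary ($N_gP^+_{\gamma_*}N_g^*=P^+_g$, $N_g\to\mathbbm{1}$ as $P^+_g\to P^+_{\gamma_*}$) and take $\gamma_g:=N_g\gamma_*N_g^*\in\Gamma_q^{(g)}$ as competitor. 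Writing $N_g=e^{S}$ with $S=-S^*$ off-diagonal and small, and using the first-order stationarity of $\mathcal E$ at the DF solution $\gamma_*$, one gets
\[
\mathcal E(\gamma_g)-\mathcal E(\gamma_*)=-\Tr[\gamma_* A^+ss^*]+\Tr[\gamma_* sA^-s^*]+\tfrac{\alpha}{2}\mathcal Q([S,\gamma_*])+\mathcal O(\|S\|^3),
\]
with $A^\pm=P^\pm_{\gamma_*}(\D_{\gamma_*}-c^2)P^\pm_{\gamma_*}$ and $s=P^+_{\gamma_*}SP^-_{\gamma_*}$: the first trace is $\ge 0$ but bounded above by a $c$-uniform multiple of $\|\gamma_* s\|_{\mathrm{HS}}^2$, the second is $\le-2c^2\|\gamma_* s\|_{\mathrm{HS}}^2$, and the Coulomb term is $\mathcal O(\alpha)\|\gamma_* s\|_{\mathrm{HS}}^2$ (all quantities involve only bounded momenta, being paired with the filled-shell projector $\gamma_*$). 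Hence for $c$ large or $\alpha$ small the right-hand side is $\le 0$, so $e_q^{(g)}\le\mathcal E(\gamma_g)\le E_q$: tilting the Dirac sea away from the self-consistent one cannot raise the ep-HF ground state. For $P^-_g$ far from $P^-_{\gamma_*}$, one argues that the supremum in \eqref{min-QED} is attained, and that at any maximizer $g_{\max}$ the ep-HF minimizer $\gamma^{(g_{\max})}$ (the projector produced by Theorem~\ref{th:2}) must be self-consistent, $P^-_{g_{\max}}=P^-_{\gamma^{(g_{\max})}}$, hence an Aufbau-type solution of the DF equations; a uniqueness statement for such configurations (for $\alpha$ small or $c$ large) then identifies $\gamma^{(g_{\max})}$ with $\gamma_*$ and gives $e_q=\mathcal E(\gamma_*)=E_q$.

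\emph{Main obstacle.} The hard point is the global part of Step (ii): showing that \emph{no} Dirac sea — in particular none of the self-consistent seas $P^-_\gamma$ attached to an \emph{excited} DF solution $\gamma$ — yields an ep-HF ground state exceeding $E_q$, equivalently that the only Aufbau-self-consistent critical point of the outer maximization is $\gamma_*$. The available machinery is not sharp enough for this: the retraction $\theta$ together with Theorem~\ref{main theorem} and Proposition~\ref{approx} only delivers $\mathcal E(\theta(\gamma^{(g)}))=e_q^{(g)}+\mathcal O(\alpha^2/c^4)$ with $\theta(\gamma^{(g)})\in\Gamma_q^+$, hence $e_q^{(g)}\ge E_q-\mathcal O(\alpha^2/c^4)$ — the wrong side for the upper bound — and the transplant estimate above degrades to $\mathcal O(\alpha^2/c^4)$ as soon as the exchange term $\mathcal Q([S,\gamma_*])$ is no longer controlled pointwise in sign. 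This sign-indefiniteness of the exchange form $\mathcal Q$ is precisely what prevents upgrading the bound $|E_q-e_q|=\mathcal O(\alpha^2/c^4)$ of Theorem~\ref{th:error bound} to an equality; closing it would appear to require either a genuine linking/saddle characterization of both $E_q$ and $e_q$ forcing them to coincide, or a monotonicity of $\mathcal E$ along the retraction iteration $(T^n)_{n\ge0}$ that has not yet been identified.
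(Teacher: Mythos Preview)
The paper does \emph{not} prove this statement; it is recorded as a conjecture, and immediately afterwards the authors recall (citing \cite{barbaroux2005some,barbaroux2005remarks}) that it is known to hold only when $q=1$ or $\sigma_q(\D-V)<\sigma_{q+1}(\D-V)$, and ``may be wrong for any other cases''. What the paper actually establishes is the weak version, Theorem~\ref{th:error bound}: $e_q\le E_q\le e_q+C\alpha^2/c^4$. So you are attempting to prove something the paper deliberately leaves open, and your own ``Main obstacle'' paragraph correctly lands on why the available tools only reach the $\mathcal O(\alpha^2/c^4)$ bound.

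There is, however, a genuine gap already in Step~(i), not only in Step~(ii). Your Aufbau estimate for the linear term when $\gamma^{--}=0$ yields coercivity proportional to the spectral gap $\epsilon_{q+1}-\nu=\epsilon_{q+1}-\epsilon_q$ of $\D_{\gamma_*}$, and you then need this to dominate the $\mathcal O(\alpha)$ quadratic correction $\tfrac{\alpha}{2}\mathcal Q(\gamma-\gamma_*)$. But nothing in the hypotheses bounds that gap from below uniformly in $\alpha$ and $c$: in the degenerate situation $\sigma_q(\D-V)=\sigma_{q+1}(\D-V)$, perturbation by the $\mathcal O(\alpha)$ mean-field term $W_{\gamma_*}$ produces a gap that is itself at most $\mathcal O(\alpha)$, so the linear coercivity and the quadratic error are of the same order and the absorption argument fails. (Your sentence ``As $c^2-\epsilon_{q+1}$ stays bounded \dots the linear term is $\ge 0$ and coercive'' conflates the $\gamma^{--}$-estimate with the $\gamma^{++}$-estimate; the quantity $c^2-\epsilon_{q+1}$ governs the former, not the latter.) This is precisely the regime singled out in \cite{barbaroux2005remarks} as the suspected counterexample, and it explains why the paper stops at the weak conjecture.

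Note also that Step~(ii), namely $e_q\le E_q$, is the direction the paper obtains directly from the literature (equation~\eqref{eq:low} via Corollary~\ref{cor:equiv} and \cite[Formula~(13)]{barbaroux2005some}); your perturbative transplant and the appeal to a hypothetical uniqueness of Aufbau-self-consistent points are neither needed for that inequality nor sufficient to upgrade the other one.
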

Let $\sigma_i(D-V)$ be the $i$-th eigenvalue (counted with multiplicity) of the operator $D-V$. It is shown in \cite{barbaroux2005some,barbaroux2005remarks} that, Mittleman's conjecture \ref{conj:1} is true if $q=1$ or $\sigma_q(D-V)<\sigma_{q+1}(D-V)$ while it may be wrong for any other cases. Thus, we instead study the following weaker problem.
\begin{conjecture}[Weak Mittleman's conjecture]\label{conj:3}
The DF model is an approximation of the ep-HF model. More precisely, for $\alpha$ small and $c$ large, $e_q\leq E_q \leq e_q+o_{\alpha_c\to 0}(c^{-2})$.
\end{conjecture}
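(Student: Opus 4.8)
The plan is to establish the two inequalities in the statement of Conjecture~\ref{conj:3} separately, namely $e_q\leq E_q$ and $E_q\leq e_q+o_{\alpha_c\to 0}(c^{-2})$.

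\textbf{Step 1: the lower bound $e_q\leq E_q$.}
The idea here is to exhibit a specific Dirac sea $P^-_g$ for which $e^{(g)}_q\geq E_q$ is nearly attained; the natural candidate is $g=\gamma_*$, a DF minimizer given by Theorem~\ref{th:1}. With this choice $P^+_{\gamma_*}\gamma_*P^+_{\gamma_*}=\gamma_*$, so that $\gamma_*\in\Gamma_q^{(\gamma_*)}$, and hence $e^{(\gamma_*)}_q\leq\mathcal{E}(\gamma_*)=E_q$. The subtle point is that we need the reverse: we want $e_q=\sup_{g}e^{(g)}_q\geq E_q$, so we must show that in fact $e^{(\gamma_*)}_q=\mathcal{E}(\gamma_*)=E_q$, i.e. that $\gamma_*$ is itself the minimizer of $\mathcal{E}$ over $\Gamma_q^{(\gamma_*)}$. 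This is where the structure \eqref{eq:gamma} of $\gamma_*$ enters: since $\gamma_*=\mathbbm{1}_{(0,\nu)}(\D_{\gamma_*})+\delta$ fills exactly the lowest positive spectral subspaces of its own mean-field operator $\D_{\gamma_*}$, a first-order variation argument (the self-consistent DF equations) shows $\gamma_*$ minimizes the linearized energy $\Tr(\D_{\gamma_*}\gamma)$ over $\Gamma_q^{(\gamma_*)}$; combined with the fact that on $\Gamma_q^{(\gamma_*)}$ the exchange term is controlled so the functional is essentially "convex enough", one upgrades this to $\mathcal{E}(\gamma_*)\leq\mathcal{E}(\gamma)$ for all $\gamma\in\Gamma_q^{(\gamma_*)}$. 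The comparison with the closed-shell structure \eqref{eq:closed-shell} for the ep-HF minimizer with Dirac sea $g=\gamma_*$ makes this rigorous.

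\textbf{Step 2: the upper bound $E_q\leq e_q+o_{\alpha_c\to 0}(c^{-2})$.}
Fix any Dirac sea $P^-_g$ and let $\gamma^{(g)}$ be the ep-HF minimizer from Theorem~\ref{th:2}, so $e^{(g)}_q=\mathcal{E}(\gamma^{(g)})$. The point is to feed $\gamma^{(g)}$ into the retraction: assuming $\gamma^{(g)}\in\mathcal{U}_R$ (which holds for $\alpha$ small / $c$ large by Remark~\ref{rem:R} together with the a priori bounds on $\|\gamma^{(g)}|\D|^{1/2}\|_{\sigma_1}$ coming from the closed-shell form \eqref{eq:closed-shell} and boundedness of the ep-HF eigenfunctions recalled in the appendix), we get $\theta(\gamma^{(g)})\in\Gamma_q^+$, hence
\begin{equation*}
E_q\leq\mathcal{E}(\theta(\gamma^{(g)}))=E(\gamma^{(g)}).
\end{equation*}
By Theorem~\ref{main theorem} (the error bound between $\mathcal{E}(\cdot)$ and $\mathcal{E}(\theta(\cdot))$, of size $\mathcal{O}(\alpha^2/c^4)=o_{\alpha_c\to 0}(c^{-2})$ on $\mathcal{U}_R$) we deduce $E_q\leq\mathcal{E}(\gamma^{(g)})+o_{\alpha_c\to 0}(c^{-2})=e^{(g)}_q+o_{\alpha_c\to 0}(c^{-2})$, where crucially the error term is uniform in $g$ over the relevant range (this uniformity must be checked: it follows from the $c$-explicit constants in Lemma~\ref{lem:retra} and Theorem~\ref{main theorem}, which depend on $g$ only through $\Tr(g)$ and the fixed $R$). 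Taking the supremum over $P^-_g\in\mathcal{P}$ on the right-hand side yields $E_q\leq e_q+o_{\alpha_c\to 0}(c^{-2})$.

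\textbf{Main obstacle.}
The delicate part is Step~1, specifically showing that $e^{(\gamma_*)}_q$ actually \emph{equals} $\mathcal{E}(\gamma_*)$ rather than merely being $\leq\mathcal{E}(\gamma_*)$ — i.e. that no state in the ep-HF manifold $\Gamma_q^{(\gamma_*)}$ built on the DF Dirac sea can do strictly better than the DF minimizer itself. This requires knowing that $\D_{\gamma_*}$ has no negative eigenvalues that $\gamma^{(\gamma_*)}$ could exploit, and that the nonlinear (exchange) part of $\mathcal{E}$ does not destroy the variational comparison; both are controlled only because $\alpha_c$ is small, and one must track the error this introduces to confirm it is absorbed into the $o_{\alpha_c\to 0}(c^{-2})$ term. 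A secondary technical nuisance is verifying that every ep-HF minimizer $\gamma^{(g)}$ — for $g$ ranging over all of $X$, not just near a DF solution — lands in $\mathcal{U}_R$ with the a priori constants uniform enough to apply the retraction and the error bound; this is handled by the eigenfunction bounds proved in the appendix, but the bookkeeping of constants in $\alpha_c$ and $c$ is where the real work lies.
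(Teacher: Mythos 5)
Your Step 1 does not prove the lower bound it is labelled with, and the argument it sketches is both in the wrong direction and false in general. Showing $e^{(\gamma_*)}_q=\mathcal{E}(\gamma_*)=E_q$ would give $e_q=\sup_g e^{(g)}_q\geq E_q$, i.e.\ an upper bound on $E_q$, not the required $e_q\leq E_q$; so even if your argument worked, the inequality $e_q\leq E_q$ would remain unproved. Moreover the claimed identity $e^{(\gamma_*)}_q=\mathcal{E}(\gamma_*)$ is essentially the strong Mittleman conjecture for the sea $P^-_{\gamma_*}$, which the paper recalls can fail; the paper itself only establishes $e^{(\gamma_*)}_q\leq E_q\leq e^{(\gamma_*)}_q+C\alpha^2/c^4$ (Proposition \ref{prop:approx}), because the ep-HF minimizer $\gamma^{(\gamma_*)}$ is closed-shell for the \emph{projected} operator $P^+_{\gamma_*}\D_{\gamma^{(\gamma_*)}}P^+_{\gamma_*}$, not for $\D_{\gamma_*}$, and can do strictly better than $\gamma_*$. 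Your "convex enough" step also fails structurally: along exchange directions $h=|\psi_a\rangle\langle\psi_a|-|\psi_b\rangle\langle\psi_b|$ one has $\Tr[W_h h]<0$, which is precisely the non-convexity exploited in the no-unfilled-shells arguments. The paper's actual proof of $e_q\leq E_q$ goes through a completely different chain: the inequality $e_q\leq E_{w,q}$ cited from \cite{barbaroux2005some}, combined with the new no-unfilled-shells Theorem \ref{th:unfill} and the resulting equivalence $E_q=E_{w,q}$ (Corollary \ref{cor:equiv}). This ingredient — a substantial part of the paper, proved via the second-order expansion of Proposition \ref{approx} — is entirely absent from your proposal.

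Your Step 2 contains the right mechanism (retraction plus Theorem \ref{main theorem}) but frames it for an arbitrary Dirac sea $g$, which is both unnecessary and unobtainable. Since $e_q$ is a supremum, it suffices to have $E_q\leq e^{(g)}_q+\text{error}$ for \emph{one} $g$, and the paper takes $g=\gamma_*$, the DF minimizer. For arbitrary $g\in X$ the existence condition \eqref{eq:ass-th2} (which involves $\Tr(g)$) can fail, Theorem \ref{main theorem} requires $g\in\Gamma_q$, and its error constants depend on $\|g-\gamma^{(g)}\|_Y$ and $c\|[W_{g-\gamma^{(g)}},\beta]\|_{\mathcal{B}(\mathcal{H})}$, which you cannot control uniformly in $g$. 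Note also that the crude bound \eqref{E-E} only gives an error of order $\alpha_c^2$, which is \emph{not} $o_{\alpha_c\to0}(c^{-2})$; to get $\alpha^2/c^4$ one needs the refined estimate \eqref{E-E'}, and its commutator term $c\|[W_{\gamma_*-\gamma^{(\gamma_*)}},\beta]\|_{\mathcal{B}(\mathcal{H})}$ is bounded only thanks to the appendix lemmas (\ref{lem:unibound}, \ref{lem:unibound'}), i.e.\ the $H^1$ bounds and the $O(1/c)$ smallness of the lower spinor components of eigenfunctions of $\D_{\gamma_*}$ and $P^+_{\gamma_*}\D_{\gamma^{(\gamma_*)}}P^+_{\gamma_*}$ — structure available precisely because the sea is built from the DF minimizer, not from a generic $g$.
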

In relativistic quantum chemistry, the relativistic effects are of the order of $\frac{1}{c^2}$; that is the error bound between the relativistic models (e.g., Dirac--Fock) and the corresponding non-relativistic model (e.g., Hartree--Fock), see e.g., \cite{Kutzel1995direct1,Kutzel1995direct2}. Mathematically, in \cite{fournais2020scott} the error bound between the reduced Dirac--Fock model and the non-relativistic Thomas-Fermi model has been studied when $q$ is large enough and $\alpha q=\kappa$ is fixed. More precisely,
\begin{align*}
    E_q^{\rm rDF}=e_{\rm TF} \alpha^2 q^{\frac{7}{3}}+\frac{Z^2}{2}+C_{\rm rela}(c)+o_{q\to \infty}(1)
\end{align*}
with
\begin{align*}
   C_{\rm rela}(c):=\sum_{n\geq 1}\{\sigma_n(\D -\frac{Z}{|x|}-c^2)-\sigma_n(-\frac{\Delta}{2}-\frac{Z}{|x|})\}. 
\end{align*}
The term $e_{\rm TF}$ is a Thomas-Fermi ground energy, the term $\frac{Z^2}{2}$ is the Scott correction, and $C_{\rm rela}$ is the corresponding relativistic effect. It is easy to see that $C_{\rm rela}$ is of the order $\frac{1}{c^2}$.

The conjecture \ref{conj:3} expresses that the DF model indeed captures all relativistic effects of the order $\frac{1}{c^2}$ taken into account in the ground state energy $e_q$.

\subsection{Main results}
In this paper, we mainly focus on the non-relativistic regime (i.e., $c\gg1$) and the weak electron-electron interaction regime (i.e., $\alpha\ll 1$). In the non-relativistic limit (i.e., $c\to +\infty$), we have $\kappa(\alpha,c)\to 0$, $\lambda_0(\alpha,c)\to 1$ and, according to Remark \ref{rem:value-v}, $\mu_{Z_c}\to \mu_{0}=1$. Therefore, Assumption \ref{ass:1} and the condition \eqref{eq:ass-th2} ( for any $g\in \Gamma_q$) will be automatically satisfied as long as $c$ is large enough.

Analogously, in the weak electron--electron interaction limit, if $c>2Z$, then we have $\kappa(\alpha,c)\to 2Z_c<1$ and  $\lambda_0(\alpha,c)\to 1-Z_c>0$ when $\alpha\to 0$, and $\mu_{Z_c}>0$. Thus it is not difficult to see that Assumption \ref{ass:1} and the condition \eqref{eq:ass-th2} ( for any $g\in \Gamma_q$) will also be satisfied as long as $\alpha$ is sufficiently small.

As a consequence, we assume the following.
\begin{assumption}\label{ass:abzq}
Let $Z\in\mathbb{R}^+$ and $q\in\mathbb{N}^+$ be fixed. We assume that $\alpha,\,c\in\mathbb{R}^+$ are chosen such that $\alpha <\frac{Z}{q}$ and $c>2Z$ and one of the following conditions are satisfied:
\begin{enumerate}
    \item (Non-relativistic regime) $c$ is large enough;
    \item (Weak electron--electron interaction regime) $\alpha$ is small enough.
\end{enumerate}
\end{assumption}
\begin{remark}
The case $\alpha q=Z$ is out of reach as explained in Remark \ref{rem:unfill}.
\end{remark}
\begin{remark}\label{rem:ass}
Under Assumption \ref{ass:abzq}, it is easy to see that there is a constant $C>0$ independent of $\alpha$ and $c$ such that $C<1-\kappa(\alpha,c)\leq \lambda_0(\alpha,c)\leq 1$. Furthermore, when $R$ is fixed, then $R<\frac{1}{2a(\alpha,c)}$ and $L(\alpha,c)<1-C$ where $a(\alpha,c)$ and $L(\alpha,c)$ are given in Lemma \ref{lem:retra}.
\end{remark}
Recall that $E_{w,q}$ is the DF ground state energy based on the wavefunctions given in \eqref{DF} and $E_q$ is the DF ground state energy in the framework of the density matrix given in \eqref{eq:min}. Our first result concerns the relationship between these two definitions of the DF ground state energy.
 \begin{theorem}[There are no unfilled shells in the DF theory]\label{th:unfill}
Let $q\in \mathbb{N}^+$. Then under Assumption \ref{ass:abzq}, the no-unfilled shells property holds: any minimizer $\gamma_*$ of $E_q$ satisfies $\gamma_*=\mathbbm{1}_{(0,\nu]}(\D_{\gamma_*})$.
\end{theorem}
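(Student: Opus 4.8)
The plan is to show that any minimizer $\gamma_*$ of $E_q$ has $\delta=0$ in the decomposition \eqref{eq:gamma} of Theorem \ref{th:1}, i.e.\ $\gamma_*=\mathbbm{1}_{(0,\nu]}(\D_{\gamma_*})$ with $\nu$ not an accumulation point of the spectrum (so the shell is genuinely filled). By Theorem \ref{th:1} we already know $\gamma_*=\mathbbm{1}_{(0,\nu)}(\D_{\gamma_*})+\delta$ with $0<\delta\le\mathbbm{1}_{\{\nu\}}(\D_{\gamma_*})$, and by \cite[Corollary 2.12]{sere} we know $\gamma_*\in\mathcal{U}_R$, so the DF energy $E(\gamma)=\mathcal{E}(\theta(\gamma))$ is well-defined in a neighborhood of $\gamma_*$. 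The key tool will be Theorem \ref{main theorem}, which says $\mathcal{E}(\theta(\gamma))=\mathcal{E}(\gamma)+\mathcal{O}(\frac{\alpha^2}{c^4})$ for pure electronic states in the ep-HF sense, together with the second-order expansion of $\mathcal{E}(\theta(\cdot))$ from Proposition \ref{approx}, and the no-unfilled-shells result for the ep-HF model (Theorem \ref{th:2}, eq.\ \eqref{eq:closed-shell}).

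First I would argue by contradiction: suppose $\gamma_*$ has $\dim\ker(\D_{\gamma_*}-\nu)\ge 1$ and $\delta\ne 0$ is a nontrivial, possibly fractional, occupation of the $\nu$-eigenspace. The strategy is to compare $\gamma_*$ with a competitor obtained by emptying or filling that top shell. Concretely, fix an orthonormal eigenbasis of $\ker(\D_{\gamma_*}-\nu)$; since $\Tr(\gamma_*)=q$ is an integer and the lower shells are fully occupied projections, the fractional part of $\delta$ forces at least a two-dimensional slack in the $\nu$-eigenspace. Take a direction that moves occupation from one $\nu$-eigenvector to another (a ``rotation'' within the degenerate shell) — this keeps $\Tr=q$ and keeps the competitor in $\Gamma_q$, and after applying the retraction $\theta$ it stays in $\Gamma_q^+$ by Lemma \ref{lem:retra}. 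Then I would compute the directional derivative of $\gamma\mapsto E(\gamma)=\mathcal{E}(\theta(\gamma))$ along such a rotation. Because $\nu$ is the common DF eigenvalue of both endpoints, the first-order term in the ep-HF functional $\mathcal{E}$ vanishes (the Euler--Lagrange condition at the minimizer), so the sign of the energy change is governed by the second-order term; by Proposition \ref{approx} and Theorem \ref{main theorem} this second-order term agrees with the ep-HF second-order term up to $\mathcal{O}(\frac{\alpha^2}{c^4})$, which under Assumption \ref{ass:abzq} is strictly dominated. In the ep-HF model Theorem \ref{th:2} tells us this second-order form is (strictly) positive in such ``intra-shell'' directions, hence one can strictly decrease $E_q$ — contradicting minimality. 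The only way out is that the top shell is not partially occupied, i.e.\ $\delta$ is itself a spectral projection, which (by the Remark following Theorem \ref{th:1}, to avoid an accumulation point with infinite trace) forces $\gamma_*=\mathbbm{1}_{(0,\nu]}(\D_{\gamma_*})$.

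More carefully, I expect the argument to be organized as follows: (i) reduce to showing the $\nu$-eigenspace cannot be partially filled; (ii) exhibit an explicit finite-dimensional family of admissible competitors inside $\mathcal{U}_R$ supported on the top shell, check admissibility using the norm bounds in Lemma \ref{lem:retra} and Remark \ref{rem:ass}; (iii) Taylor-expand $E(\cdot)=\mathcal{E}(\theta(\cdot))$ to second order along this family using Proposition \ref{approx}, identify the first-order term with $(\nu-c^2)$ times the trace variation (which is zero since trace is preserved and $\nu$ is the top eigenvalue for both states) plus a term that vanishes by the self-consistent equation; (iv) control the discrepancy between the Hessian of $\mathcal{E}(\theta(\cdot))$ and the Hessian of $\mathcal{E}$ by $\mathcal{O}(\frac{\alpha^2}{c^4})$ via Theorem \ref{main theorem}; (v) invoke the ep-HF no-unfilled-shells mechanism of \cite{barbaroux2005hartree} (the positivity that underlies \eqref{eq:closed-shell}) to conclude the Hessian is positive on the relevant subspace, and deduce the strict decrease.

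The main obstacle I anticipate is step (iv)–(v): transferring the ep-HF positivity of the second variation to the DF functional $\mathcal{E}(\theta(\cdot))$. The retraction $\theta$ is only a contraction-fixed-point limit, not given in closed form, so its second derivative must be estimated purely from the quantitative bounds \eqref{eq:retra}; one must ensure that the $\mathcal{O}(\frac{\alpha^2}{c^4})$ error from Theorem \ref{main theorem} is genuinely smaller than the (order-one, or at least order $\frac{1}{c^2}$) gap provided by the ep-HF positivity, and that this positivity survives restriction to the finite-dimensional intra-shell directions — for which one needs that the relevant ep-HF operator $P^+_g\D_{\gamma^{(g)}}P^+_g$ has a spectral gap above $\nu$, exactly the content of \eqref{eq:closed-shell}. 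A secondary technical point is bookkeeping the fractional occupation $\delta$: if $\dim\ker(\D_{\gamma_*}-\nu)=1$ then no intra-shell rotation exists, and one must instead compare with the two competitors $\mathbbm{1}_{(0,\nu)}$ and $\mathbbm{1}_{(0,\nu]}$ (changing the particle number by one and re-optimizing), using that $E_q$ as a function of $q$ is strictly convex/strictly monotone in the relevant range — again a consequence transported from ep-HF via Theorem \ref{main theorem}.
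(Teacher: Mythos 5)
Your overall skeleton (argue by contradiction, perturb the minimizer by transferring occupation between two orbitals of the degenerate shell, and expand $E(\cdot)=\mathcal{E}(\theta(\cdot))$ to second order via Proposition \ref{approx}) is the same as the paper's. But the engine of your argument is wrong. You assert that the ep-HF second-order form is \emph{strictly positive} in intra-shell directions (invoking Theorem \ref{th:2} and \eqref{eq:closed-shell}) and then conclude that one can \emph{strictly decrease} the energy -- these two statements contradict each other: with a vanishing first-order term, a positive Hessian would make the competitor's energy larger, not smaller, and Theorem \ref{th:2} plays no role in the paper's proof. The correct mechanism (as in Bach--Lieb--Loss--Solovej \cite{PhysRevLett.72.2981} and in the paper) is that for $h=\left|\psi_a\right>\left<\psi_a\right|-\left|\psi_b\right>\left<\psi_b\right|$ with $\psi_a,\psi_b\in\ker(\D_{\gamma_*}-\nu)$ and occupations $\mu_a<1$, $\mu_b>0$, the first-order term is $(\nu-\nu)t=0$ and the second-order term $\frac{\alpha t^2}{2}\Tr[W_h h]$ is \emph{strictly negative}: it equals minus the Coulomb energy of the $2\times 2$ determinants built from $\psi_a,\psi_b$ (direct minus exchange collapses to an exchange-type term with a minus sign). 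The contradiction then comes from the fact that this negative term of order $\alpha$ dominates the retraction error $\mathcal{O}(\alpha_c^2)$ from Proposition \ref{approx} when $c$ is large or $\alpha$ is small. There is no need to ``transfer positivity'' from ep-HF through Theorem \ref{main theorem}; that transfer is not how the proof works, and as stated your step (v) cannot produce a decrease of the energy.

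A second genuine gap is uniformity in $c$ (or $\alpha$), which is where most of the actual work lies. Since the claim is ``for all $c$ large enough,'' one must produce: a rank bound $M$ on $\gamma_*^{c}$ independent of $c$ (Lemma \ref{lem:sigma}), which is what guarantees the existence of two shell orbitals with occupations $\mu_a\le\frac{M-1}{M}$ and $\mu_b\ge\frac{1}{M}$ and a uniform step size $t_0=\frac{1}{M}$; uniform $H^1$ bounds on the eigenfunctions (Lemma \ref{lem:unibound}) so that a single radius $R$ works and $\gamma_*+th\in\mathcal{U}_R$ can be checked via Lemma \ref{lem:T-I}; and, crucially, a lower bound as in \eqref{eq:3.8}, uniform in $c$, on the exchange integral, which the paper gets from the nonrelativistic limit ($H^1$-compactness of the eigenfunctions along $c_n\to\infty$, following \cite{esteban2001nonrelativistic}). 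Your proposal does not address any of these, yet without them the inequality $0\le C\alpha^2 t^2/c^2 - C'\alpha t^2/2<0$ cannot be closed. Finally, your fallback for a one-dimensional $\nu$-eigenspace (comparing different particle numbers and invoking strict convexity of $q\mapsto E_q$) is both unsupported and unnecessary: since $\Tr(\gamma_*)=q$ and the lower shells are projections, $\Tr(\delta)$ is a positive integer, and $0<\delta<\mathbbm{1}_{\{\nu\}}(\D_{\gamma_*})$ forces $\rank\,\mathbbm{1}_{\{\nu\}}(\D_{\gamma_*})\ge 2$, so the intra-shell direction always exists.
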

As a result,
\begin{corollary}\label{cor:equiv}
Under Assumption \ref{ass:abzq}, $E_q=E_{w,q}$.
\end{corollary}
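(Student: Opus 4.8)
The plan is to prove the two inequalities $E_{w,q}\ge E_q$ and $E_{w,q}\le E_q$ separately. The first is elementary: it only uses that a wavefunction solution of the DF equations produces a density matrix lying in $\Gamma_q^+$. The second is where Theorem~\ref{th:unfill} does all the work, turning a density-matrix minimizer into an admissible competitor for \eqref{DF}.

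For $E_{w,q}\ge E_q$, I would take an arbitrary $\Phi=(u_1,\dots,u_q)\in G_q(H^{1/2})$ solving the DF equations, so that $\D_{\gamma_\Phi}u_j=\epsilon_j u_j$ with $\epsilon_j\in(0,c^2)$ and $\gamma_\Phi=\sum_{n=1}^q|u_n\rangle\langle u_n|$, and check that $\gamma_\Phi\in\Gamma_q^+$. Indeed $\gamma_\Phi$ is an orthogonal projector of rank $q$, hence $0\le\gamma_\Phi\le\mathbbm{1}_{\mathcal H}$ and $\Tr(\gamma_\Phi)=q$; since the $u_j$ lie in $H^{1/2}$ (in fact in $H^1$, by the regularity of eigenfunctions of $\D_{\gamma_\Phi}$ recalled in the Appendix), $\gamma_\Phi\in X$; and because $\mathrm{Ran}(\gamma_\Phi)$ is spanned by eigenfunctions of $\D_{\gamma_\Phi}$ with positive eigenvalues, $\mathrm{Ran}(\gamma_\Phi)\subset\mathrm{Ran}(P^+_{\gamma_\Phi})$, whence $P^+_{\gamma_\Phi}\gamma_\Phi=\gamma_\Phi=\gamma_\Phi P^+_{\gamma_\Phi}$ and so $P^+_{\gamma_\Phi}\gamma_\Phi P^+_{\gamma_\Phi}=\gamma_\Phi$. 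Thus $\gamma_\Phi\in\Gamma_q^+$ and $\mathcal E(\gamma_\Phi)\ge E_q$ by \eqref{eq:min}; taking the minimum over all such $\Phi$ gives $E_{w,q}\ge E_q$.

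For $E_{w,q}\le E_q$, Theorem~\ref{th:1} provides a minimizer $\gamma_*\in\Gamma_q^+$ of \eqref{eq:min} with $\Tr(\gamma_*)=q$, and Theorem~\ref{th:unfill} (valid under Assumption~\ref{ass:abzq}) gives $\gamma_*=\mathbbm{1}_{(0,\nu]}(\D_{\gamma_*})$ for some $\nu\in(0,c^2)$ (here $\nu<c^2$ uses $\alpha q<Z$). Since $\Tr(\gamma_*)=q<+\infty$, the operator $\D_{\gamma_*}$ has exactly $q$ eigenvalues $\epsilon_1,\dots,\epsilon_q$ (with multiplicity) in $(0,\nu]\subset(0,c^2)$; choosing an orthonormal family $u_1,\dots,u_q$ of associated eigenfunctions (which lie in $H^{1/2}$, indeed $H^1$), one gets $\Phi:=(u_1,\dots,u_q)\in G_q(H^{1/2})$ with $\gamma_\Phi=\gamma_*$, so $\D_{\gamma_\Phi}u_j=\D_{\gamma_*}u_j=\epsilon_j u_j$ and $\Phi$ solves the DF equations. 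Hence $E_{w,q}\le\mathcal E(\gamma_\Phi)=\mathcal E(\gamma_*)=E_q$, and combining with the previous step yields $E_q=E_{w,q}$.

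The only real difficulty is Theorem~\ref{th:unfill} itself, which I may assume; given it, the corollary is merely a dictionary between the two feasible sets. The points deserving a line of care are: that the spectral projector $\mathbbm{1}_{(0,\nu]}(\D_{\gamma_*})$ genuinely decomposes into $q$ honest $H^{1/2}$-eigenfunctions (this combines $\Tr(\gamma_*)=q$ with the a priori boundedness/regularity of eigenfunctions of the DF operator proved in the Appendix), and that ``solution of the DF equations'' in \eqref{DF} imposes nothing beyond $\D_{\gamma_\Phi}u_j=\epsilon_j u_j$ with $\epsilon_j\in(0,c^2)$, so that the inclusions between the admissible sets of \eqref{DF} and \eqref{eq:min} match exactly in both directions.
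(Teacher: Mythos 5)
Your proposal is correct and follows essentially the same route as the paper: one inequality comes from the elementary observation that any wavefunction solution of the DF equations yields a density matrix in $\Gamma_q^+$, and the other from Theorem~\ref{th:unfill}, which turns the density-matrix minimizer into a rank-$q$ projector onto eigenfunctions of $\D_{\gamma_*}$ with eigenvalues in $(0,c^2)$, hence a competitor for \eqref{DF}. Your write-up is in fact slightly more careful than the paper's (e.g.\ checking $\gamma_\Phi\in X$ and that $\nu<c^2$ uses $\alpha q<Z$), but the argument is the same.
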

The proof of Theorem \ref{th:unfill} and Corollary \ref{cor:equiv} are postponed to the end of Section \ref{sec:unfill}.
According to \cite[Formula (13)]{barbaroux2005some}, $e_q\leq E_{w,q}$ holds true for $\alpha$ sufficiently small and $c$ sufficiently large. Hence, 
\begin{align}\label{eq:low}
    e_q\leq E_q.
\end{align}
By definition, for any $g\in \mathcal{P}$, $e_q^{(g)}\leq e_q$. To prove Mittleman's conjecture, we investigate the relationship between $E_q$ and $e^{(g)}_q$ in Section \ref{sec:DF<ep-HF}.

The other main result  of this paper is the following.
\begin{theorem}[Weak Mittleman's conjecture]\label{th:error bound}
Let $Z\in\mathbb{R}^+$ and $q\in \mathbb{N}^+$. For $\alpha$ sufficiently small and $c$ sufficiently large, there exists a constant $C>0$ such that
\begin{equation}\label{eq:error bound}
    e_q\leq E_q\leq  e_q+C\frac{\alpha^2}{c^4},
\end{equation}
where $\gamma_*$ minimizes $E_q$ and $\gamma^{(\gamma_*)}$ minimizes $e^{(\gamma_*)}$.
\end{theorem}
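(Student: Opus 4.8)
The lower bound $e_q\le E_q$ in \eqref{eq:error bound} is already recorded in \eqref{eq:low} (it follows from \cite[Formula~(13)]{barbaroux2005some}), so the plan is entirely devoted to the upper bound $E_q\le e_q+C\alpha^2c^{-4}$. The conceptual point is that, among all admissible Dirac seas $P^-_g\in\mathcal P$, the one best adapted to the Dirac--Fock minimiser is $P^-_{\gamma_*}$ itself, and the ep-HF ground state for that particular sea should then lie within $O(\alpha^2c^{-4})$ of $\gamma_*$. Concretely, I would first fix a minimiser $\gamma_*\in\Gamma_q^+$ of \eqref{eq:min}, which exists by Theorem \ref{th:1} since Assumption \ref{ass:1} holds under Assumption \ref{ass:abzq}; since $\gamma_*\in X$ we have $P^-_{\gamma_*}\in\mathcal P$, and I take its ep-HF ground state $\gamma^{(\gamma_*)}\in\Gamma_q^{(\gamma_*)}$, which exists and is a rank-$q$ orthogonal projection (no unfilled shell) by Theorem \ref{th:2} --- its hypothesis \eqref{eq:ass-th2} holds because $\Tr(\gamma_*)=q$ and $\alpha_c\to0$ while $\mu_{Z_c}$ stays bounded away from $0$ under Assumption \ref{ass:abzq}. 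By \eqref{eq:min-HF} and \eqref{min-QED} one then has at once
\[
\mathcal E(\gamma^{(\gamma_*)})=e^{(\gamma_*)}_q\le e_q .
\]

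The next step is to transport $\gamma^{(\gamma_*)}$ back into the Dirac--Fock set $\Gamma_q^+$ by means of the retraction $\theta$ of Lemma \ref{lem:retra}, which requires checking that $\gamma^{(\gamma_*)}\in\mathcal U_R$. Here I would use the a priori bounds of the Appendix on the eigenfunctions of the mean-field operator $P^+_{\gamma_*}\D_{\gamma^{(\gamma_*)}}P^+_{\gamma_*}$: since $\Tr(\gamma^{(\gamma_*)})=q$ and these eigenfunctions are bounded uniformly in $\alpha,c$, with $|\D|^{1/2}$ acting on them essentially like multiplication by $c$, the quantity $\tfrac1c\|\gamma^{(\gamma_*)}|\D|^{1/2}\|_{\sigma_1}$ is bounded by a constant multiple of $q$ that is $<R$ under Assumption \ref{ass:abzq} (this is the same mechanism that forces the lower bound on $R$ in Assumption \ref{ass:1}, and mirrors \cite[Corollary~2.12]{sere}). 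For the second term, note $P^-_{\gamma_*}\gamma^{(\gamma_*)}=0$, so $T(\gamma^{(\gamma_*)})-\gamma^{(\gamma_*)}$ is controlled by $\|P^+_{\gamma^{(\gamma_*)}}-P^+_{\gamma_*}\|$, which in turn is $O(\alpha_c)$ by standard resolvent perturbation (the spectral gap of $\D_\gamma$ around $0$ is of order $c^2$ and $\D_{\gamma^{(\gamma_*)}}-\D_{\gamma_*}=\alpha\,(W_{\gamma^{(\gamma_*)}}-W_{\gamma_*})$); hence $\tfrac{A(\alpha,c)}{c^2}\|T(\gamma^{(\gamma_*)})-\gamma^{(\gamma_*)}\|_{X_c}=o(1)$. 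Thus $\gamma^{(\gamma_*)}\in\mathcal U_R$ for $\alpha$ small or $c$ large, and Lemma \ref{lem:retra} gives $\theta(\gamma^{(\gamma_*)})\in\Gamma_q^+$; in particular $\mathcal E(\theta(\gamma^{(\gamma_*)}))\ge E_q$ by \eqref{eq:min}.

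Finally I would invoke Theorem \ref{main theorem} on the pure ep-HF state $\gamma^{(\gamma_*)}$, which yields $|\mathcal E(\theta(\gamma^{(\gamma_*)}))-\mathcal E(\gamma^{(\gamma_*)})|\le C\alpha^2c^{-4}$ with $C$ depending only on $q$ and $Z$. Chaining the three facts,
\[
E_q\ \le\ \mathcal E(\theta(\gamma^{(\gamma_*)}))\ \le\ \mathcal E(\gamma^{(\gamma_*)})+C\frac{\alpha^2}{c^4}\ =\ e^{(\gamma_*)}_q+C\frac{\alpha^2}{c^4}\ \le\ e_q+C\frac{\alpha^2}{c^4},
\]
which, together with \eqref{eq:low}, is exactly \eqref{eq:error bound}.

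The main obstacle lies not in this short assembly but in the two inputs it rests on. The decisive one is Theorem \ref{main theorem}: obtaining the rate $\alpha^2c^{-4}$ rather than the naive $\alpha c^{-2}$ forces one to show that $\theta(\gamma)-\gamma$ is \emph{quadratically} small and that the first-order contribution to $\mathcal E$ along the retraction cancels because $\gamma$ sits on the positive spectral subspace of the relevant mean-field operator; this is the technical heart carried out in Section \ref{sec:proof2}, which I would use as a black box here. Within the present argument the only genuine verification is $\gamma^{(\gamma_*)}\in\mathcal U_R$, i.e.\ the uniform-in-$(\alpha,c)$ eigenfunction bounds of the Appendix together with the $O(\alpha_c)$ closeness of $P^+_{\gamma_*}$ and $P^+_{\gamma^{(\gamma_*)}}$ --- both of which are of the type already established in \cite{sere,barbaroux2005hartree}.
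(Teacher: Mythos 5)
Your overall architecture is the same as the paper's: fix a minimizer $\gamma_*$ of $E_q$, take the sea $P^-_{\gamma_*}$ and the ep-HF minimizer $\gamma^{(\gamma_*)}$, check $\gamma^{(\gamma_*)}\in\mathcal{U}_R$ (via the Appendix bounds and the $O(\alpha_c)$ estimate on $T(\gamma^{(\gamma_*)})-\gamma^{(\gamma_*)}$, exactly as in Lemma \ref{lem:T-I}), retract with $\theta$ to land in $\Gamma_q^+$, apply Theorem \ref{main theorem}, and chain $E_q\le \mathcal{E}(\theta(\gamma^{(\gamma_*)}))\le e_q^{(\gamma_*)}+C\alpha^2c^{-4}\le e_q+C\alpha^2c^{-4}$ together with \eqref{eq:low}. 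This is precisely the content of Proposition \ref{prop:approx} and its use in the paper.

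However, there is a genuine gap at the step where you "invoke Theorem \ref{main theorem}, which yields $|\mathcal{E}(\theta(\gamma^{(\gamma_*)}))-\mathcal{E}(\gamma^{(\gamma_*)})|\le C\alpha^2c^{-4}$ with $C$ depending only on $q$ and $Z$." The theorem does not deliver this by itself: the $c^{-4}$ estimate \eqref{E-E'} carries the factor $\bigl(32\|\gamma_*-\gamma^{(\gamma_*)}\|_Y+c\|[W_{\gamma_*-\gamma^{(\gamma_*)}},\beta]\|_{\mathcal{B}(\mathcal{H})}\bigr)^2$, and one must prove this is bounded uniformly in $\alpha$ and $c$. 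The naive bound $\|[W_{\gamma_*-\gamma^{(\gamma_*)}},\beta]\|_{\mathcal{B}(\mathcal{H})}\le \pi\|\gamma_*-\gamma^{(\gamma_*)}\|_X$ only gives $c\cdot O(1)=O(c)$ for the commutator term, which degrades the final rate to $O(\alpha^2/c^2)$ and so does not prove the stated $O(\alpha^2/c^4)$ bound. The paper closes this by splitting the kernels of $\gamma_*$ and $\gamma^{(\gamma_*)}$ into upper/lower spinor blocks, noting that $[\gamma(x,y),\beta]$ only involves the off-diagonal blocks $\psi_j^{(1)}\otimes\psi_j^{(2),*}$, and using the lower-component smallness $\|\psi_j^{(2)}\|_{\mathcal{H}}\le K'/c$ from \eqref{eq:prop:eigen1} and \eqref{eq:prop:eigen2} (Lemmas \ref{lem:unibound} and \ref{lem:unibound'}) to get $c\|[W_{\gamma_*},\beta]\|_{\mathcal{B}(\mathcal{H})}+c\|[W_{\gamma^{(\gamma_*)}},\beta]\|_{\mathcal{B}(\mathcal{H})}\le 16KK'q$; the same lemmas give $\|\gamma_*-\gamma^{(\gamma_*)}\|_Y\le 2K^2q$ and in particular that both states lie in $Y$, which \eqref{E-E'} requires. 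Your proposal never addresses the $\beta$-commutator term (your "technical heart" comment attributes the full $c^{-4}$ rate to Section \ref{sec:proof2} as a black box), so as written the assembly only yields the weaker $\alpha^2/c^2$ bound; the missing ingredient is this relativistic-smallness argument for the lower spinor components of the two minimizers.
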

The proof is provided in Section \ref{sec:DF<ep-HF}. As explained in the introduction, this error bound is much smaller than the relativistic effects and thus provides a justification of the DF model.

\begin{remark}
Actually, according to \cite[Definition 2]{barbaroux2005some}, any orthogonal projector in $\mathcal{H}$ is $\epsilon$-close to $\Lambda^+$ when $c$ is large enough. Thus, \eqref{eq:low} and Theorem \ref{th:error bound} remain true if we only assume that $c$ is large enough. 
\end{remark}

\begin{remark}[Justification of the DF model with vacuum polarization]\label{rem:QED}
In the full QED theory, typical QED effects such as vacuum polarization is of the size $\mathcal{O}(\frac{1}{c^3})$ (see, e.g., \cite[Ch. 5.5]{dyall2007introduction}). Then the ground state energy $e_q$ due to Mittleman and the DF ground state energy describe the full QED model up to an error bound of the size $\mathcal{O}(\frac{1}{c^3})$. 
\end{remark}

\section{Properties of the DF model}\label{sec:str}
Before studying Mittleman's conjecture, we need a better understanding of the properties of the DF model. The key ingredient in the proof of our weak Mittleman's conjecture is the following.
\begin{theorem}[Error bound between the DF functional and the DF energy]\label{main theorem}
Let $R,Z\in\mathbb{R}^+$ and $q\in\mathbb{N}^+$ be fixed. Let $\kappa(\alpha,z)<1$ and $L(\alpha,c)=2a(\alpha,c)R<1$ be given as in Lemma \ref{lem:retra}. Then for any $\gamma\in\mathcal{U}_R$ satisfying $P^+_{g}\gamma P^+_{g}=\gamma$ with $g\in \Gamma_q$, 
\begin{align}\label{E-E}
    |E(\gamma)-\mathcal{E}(\gamma)|\leq \frac{5\pi^2(6+\pi)( R+q)}{4(1-\kappa(\alpha,c))^4\lambda_0^{5/2}(\alpha,c)(1-L(\alpha,c))^2}\frac{\alpha^2}{c^2}\|g-\gamma\|_X^2.
\end{align}
If moreover $g,\gamma\in Y$,
\begin{align}\label{E-E'}
    |E(\gamma)-\mathcal{E}(\gamma)|\leq \frac{5(6+\pi)( R+q)}{(1-\kappa(\alpha,c))^4\lambda_0^{9/2}(\alpha,c)(1-L(\alpha,c))^2}\frac{\alpha^2}{c^4}\left(32\|g-\gamma\|_Y +c\|[W_{g-\gamma},\beta]\|_{\mathcal{B}(\mathcal{H})}\right)^2.
\end{align}
\end{theorem}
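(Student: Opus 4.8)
The plan is to estimate $E(\gamma)-\mathcal E(\gamma)=\mathcal E(\theta(\gamma))-\mathcal E(\gamma)$ by tracking the telescoping sum $\mathcal E(\theta(\gamma))-\mathcal E(\gamma)=\sum_{n\ge 0}\big(\mathcal E(T^{n+1}(\gamma))-\mathcal E(T^n(\gamma))\big)$, controlling each increment via the contraction estimates \eqref{eq:retra} from Lemma \ref{lem:retra}. The starting observation is that $\gamma=P^+_g\gamma P^+_g$ means $\gamma$ is ``$g$-positive'' but not necessarily $\gamma$-positive, so the first iterate $T(\gamma)-\gamma=P^+_\gamma\gamma P^+_\gamma-\gamma$ measures exactly the defect $P^-_\gamma\gamma+\gamma P^-_\gamma-P^-_\gamma\gamma P^-_\gamma$, and since $P^+_\gamma-P^+_g$ is small when $\|g-\gamma\|_X$ is small (the map $\gamma\mapsto \D_\gamma\mapsto P^+_\gamma$ being Lipschitz in the relevant norms, a standard resolvent/Cauchy-integral argument bounding $P^+_\gamma-P^+_g$ by a constant times $\alpha_c\|g-\gamma\|_X$), one gets $\|T(\gamma)-\gamma\|_{X_c}\lesssim \alpha_c \|g-\gamma\|_X$ up to the structural constants. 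I would first isolate this bound on $\|T(\gamma)-\gamma\|_{X_c}$ cleanly, since everything downstream is expressed in terms of it through \eqref{eq:retra}.

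Next I would expand a single increment $\mathcal E(\gamma')-\mathcal E(\gamma)$ for $\gamma'=\gamma+h$ with $h$ small in $X_c$. Writing $\mathcal E$ via \eqref{eq:DFfunctional} as $\Tr[(\D-V)\gamma]-c^2\Tr\gamma$ plus the quadratic exchange-direct term, the linear-in-$h$ part is $\Tr[(\D_\gamma-c^2)h]$ and the quadratic part is $\tfrac{\alpha}{2}Q(h)$ where $Q$ is the (sign-indefinite but $X$-bounded) direct-minus-exchange form. For the iterates $h=T^{n+1}(\gamma)-T^n(\gamma)$ one has $\Tr(T^n(\gamma))\le q$ uniformly, so the linear term is controlled by $\|\D_\gamma-c^2\|$ against $\|h\|_{\sigma_1}$-type quantities; crucially the ``$-c^2$'' shift and the projection structure should make the linear term also $O(\|h\|_{X_c}^2/c)$ rather than $O(\|h\|_{X_c})$ — this is the cancellation that produces the {\it square} $\|g-\gamma\|_X^2$ in \eqref{E-E} rather than a first power. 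Concretely, because $T^{n+1}(\gamma)=P^+_{\gamma_n}\gamma_n P^+_{\gamma_n}$ with $\gamma_n=T^n(\gamma)$, the increment $h_n$ lives in the range where $\D_{\gamma_n}$ is being projected, and $\Tr[(\D_{\gamma_n}-c^2)h_n]$ can be rewritten using $\D_{\gamma_n}=\D_{\gamma_{n-1}}+\alpha W_{\gamma_n-\gamma_{n-1}}$ so that the leading piece telescopes against the previous step and only the $O(\|h_{n-1}\|\,\|h_n\|)$ remainder survives. Summing the geometric series $\sum_n L^n\cdot(\text{stuff})\lesssim \|T(\gamma)-\gamma\|_{X_c}^2/(1-L)^2$ and converting $X_c$-norms to $X$-norms (a factor $c$ on each $|\D|^{1/2}$, i.e. $\|\cdot\|_{X_c}\le c\|\cdot\|_X$ up to $\lambda_0$-powers from $(c^4-c^2\Delta)^{1/2}\sim c^2(1-\Delta)^{1/2}$ on the relevant range) gives the stated bound with the displayed powers of $(1-\kappa)$, $\lambda_0$ and $(1-L)$.

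For the refined bound \eqref{E-E'} under the extra regularity $g,\gamma\in Y=X^2$, the point is to gain an extra factor $1/c^2$. The mechanism is that the difference $P^+_\gamma-P^+_g$, and hence $T(\gamma)-\gamma$, can be split into a ``free'' part governed by $\Lambda^+-\Lambda^-$ and a lower-order part; the free projector $\Lambda^+=\tfrac12(1+\D/|\D|)$ satisfies $[\Lambda^+,\,\cdot\,]$-type commutator estimates that, on $Y$, produce an extra $1/c$, while the genuinely non-commuting obstruction is exactly the term $[W_{g-\gamma},\beta]$ — the commutator of the mean-field potential with the mass matrix $\beta$ — which is why it appears explicitly in \eqref{E-E'}. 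So I would redo the increment expansion keeping $Y$-norms, use $\|(c^4-c^2\Delta)^{1/4}\,\cdot\,(c^4-c^2\Delta)^{1/4}\|_{\sigma_1}\sim c\|(1-\Delta)^{1/2}\,\cdot\,(1-\Delta)^{1/2}\|_{\sigma_1}=c\|\cdot\|_Y$ on the parts where two derivatives are available, and isolate the one commutator that does not improve, getting the combination $32\|g-\gamma\|_Y+c\|[W_{g-\gamma},\beta]\|_{\mathcal B(\mathcal H)}$ inside the square. The main obstacle, and where I expect the real work to be, is the second one: making the $O(\|h\|^2)$ cancellation in the linear term of $\mathcal E$ rigorous and $c$-uniform — i.e., showing that $\Tr[(\D_{\gamma_n}-c^2)(T(\gamma_n)-\gamma_n)]$ is genuinely quadratic in the defect and not merely linear — since this requires exploiting the variational/projection identity $P^+_{\gamma_n}\D_{\gamma_n}P^-_{\gamma_n}=0$ together with careful trace-class bookkeeping (the operators are only in $\sigma_1$ after conjugation by $(c^4-c^2\Delta)^{1/4}$), and then carrying all the $\alpha,c$ dependence through the geometric summation without losing the claimed powers. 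Tracking the commutator $[W_{g-\gamma},\beta]$ cleanly through every step of \eqref{E-E'} is the secondary bookkeeping hazard.
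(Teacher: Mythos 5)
Your overall architecture coincides with the paper's: everything is reduced to $\|T(\gamma)-\gamma\|_{X_c}$ and $\|P^-_\gamma\gamma P^-_\gamma\|_{X_c}$, these are bounded using $\gamma=P^+_g\gamma P^+_g$ together with a resolvent estimate for $\gamma\mapsto P^+_\gamma$, the iterates are summed via the contraction \eqref{eq:retra}, and for \eqref{E-E'} you correctly identify the commutator mechanism with $[W_{g-\gamma},\beta]$ as the non-improving obstruction. However, there is a genuine gap exactly at the step you yourself flag as ``the main obstacle'': the proof that the linear part of the energy increment is \emph{quadratic} in the defect. The device you propose --- writing $\D_{\gamma_n}=\D_{\gamma_{n-1}}+\alpha W_{\gamma_n-\gamma_{n-1}}$ so that ``the leading piece telescopes against the previous step'' --- does not do the job: after this substitution you still face $\Tr[(\D_{\gamma_{n-1}}-c^2)h_n]$, which is a priori only linear in the defect, and summing linear increments over $n$ yields a bound of order $\alpha\|g-\gamma\|_X$, not $\alpha_c^2\|g-\gamma\|_X^2$. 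Likewise, the identity $P^+_{\gamma_n}\D_{\gamma_n}P^-_{\gamma_n}=0$ only explains why the off-diagonal blocks of the increment drop out of the trace; it says nothing about the size of the diagonal blocks. What closes the argument in the paper is a statement about the projector map itself: (i) the Gateaux derivative of $\gamma\mapsto P^+_\gamma$ is purely off-diagonal, $P^\pm_\gamma\,(dP^+_\gamma h)\,P^\pm_\gamma=0$, see \eqref{eq:dP}, and (ii) the second-order resolvent bound \eqref{eq:P-P-dP}, $\||\D|^{1/2}(P^+_{\gamma}-P^+_{\gamma'}-dP^+_{\gamma'}(\gamma-\gamma'))\|_{\mathcal{B}(\mathcal{H})}\lesssim \alpha_c^2 c^{-3}\|\gamma-\gamma'\|_{X_c}^2$. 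These two facts are what make the diagonal blocks $P^+_\gamma(\theta(\gamma)-T(\gamma))P^+_\gamma$ and $P^-_\gamma\theta(\gamma)P^-_\gamma$ of size $\alpha_c^2c^{-2}\|T(\gamma)-\gamma\|_{X_c}^2$ (Lemma \ref{lem:t-T}), which is the quantitative heart of \eqref{E-E}; without them your telescoping scheme stalls at a linear bound.

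A secondary but non-negligible issue is the $(\alpha,c)$-bookkeeping, which is the entire content of the theorem. Your intermediate claim $\|T(\gamma)-\gamma\|_{X_c}\lesssim\alpha_c\|g-\gamma\|_X$ is off in the powers of $c$: by \eqref{eq:P-P} and $\|\gamma|\D|^{1/2}\|_{\sigma_1}\le cR$ on $\mathcal{U}_R$, the correct scaling is $\|T(\gamma)-\gamma\|_{X_c}\lesssim R\,\alpha\,\|g-\gamma\|_X$ as in \eqref{eq:6.7}, and recovering the stated prefactor in \eqref{E-E} additionally uses $R\alpha_c\le 2/\pi$ (from $R<\frac{1}{2a(\alpha,c)}$) to trade one factor of $R$ for $\alpha_c^{-1}$; factors of $c$ and $R$ cannot be absorbed into ``structural constants'' here. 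Note also that the paper does not telescope the energy at all: since $\mathcal{E}$ is quadratic, it uses the single exact expansion $E(\gamma)-\mathcal{E}(\gamma)=\Tr[\D_\gamma(\theta(\gamma)-\gamma)]+\frac{\alpha}{2}\Tr[W_{\theta(\gamma)-\gamma}(\theta(\gamma)-\gamma)]-c^2\Tr(\theta(\gamma)-\gamma)$ around $\gamma$ itself, and telescopes only inside the operator estimate for $P^+_\gamma(\theta(\gamma)-T(\gamma))P^+_\gamma$ --- a cleaner organization, but the projector-derivative input above is indispensable either way, and it is precisely what your proposal leaves unproved.
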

The proof is postponed until Section \ref{sec:proof2}. 

Before going further, let us roughly explain the implications of  this theorem~:
\begin{enumerate}
    \item For any pure electronic quantum state in the ep-HF model (i.e., any $\gamma\in\Gamma_q^{(g)}$ with $P^-_{g}\gamma P^-_g=0$), the DF energy of $\gamma$ is an approximation of the corresponding ep-HF energy. More precisely, if $\gamma\in\mathcal{U}_R$ for some $R>0$, then
    \[
    |E(\gamma)-\mathcal{E}(\gamma)|\leq C(\alpha,c)\alpha_c^2.
    \]
    \item The DF model is an approximation of the ep-HF model associated with the Dirac sea $P^-_{\gamma_*}$. This result will be proved in Section \ref{sec:DF<ep-HF}.
\end{enumerate}
Consequently, one can deduce that some properties of the ep-HF model can translate to the DF model as mentioned in the introduction.

\subsection{Second-order expansion for the DF energy}
It is easy to see that for any $\gamma$ and $h$ in $\Gamma_{q}^{(g)}$ and $t\in\mathbb{R}$ such that $\gamma+th\in \Gamma_q^{(g)}$, we have the following expansion for the ep-HF energy
\begin{align*}
    \mathcal{E}^{(g)}(\gamma+th)=\mathcal{E}^{(g)}(\gamma)+t\Tr [(\D_{\gamma}-c^2)h]+\frac{\alpha t^2}{2}\Tr[W_h h].
\end{align*}
Here $\mathcal{E}^{(g)}=\mathcal{E}|_{\Gamma_{q}^{(a)}}$ represents the energy of the electronic state in ep-HF theory. As a result of Theorem \ref{th:error bound}, we have the following expansion for the DF energy.
\begin{proposition}[Second-order expansion for the DF energy] \label{approx}
Let $R,Z\in\mathbb{R}^+$ and $q\in\mathbb{N}^+$ be fixed. Let $\kappa(\alpha,z)=2(\alpha_cq+Z_c)<1$ and $R<\frac{1}{2a(\alpha,c)}$. Given any $\gamma\in \mathcal{U}_R\cap \Gamma_q^+$ and any $h\in X$ such that $P^+_{\gamma}h P^+_{\gamma}=h$, then for any $t\in\mathbb{R}$ satisfying $\gamma+th\in \mathcal{U}_R$, we have
\begin{equation}\label{eq:sed-diff}
    \begin{aligned}
\MoveEqLeft    E(\gamma+th)=E(\gamma)+t\Tr [(\D_{\gamma}-c^2)h]+\frac{\alpha t^2}{2}\Tr[W_h h]+t^2\alpha_c^2  Error_{\gamma}(h,t),
\end{aligned}
\end{equation}
where 
\[
 |Error_{\gamma}(h,t)|\leq \frac{5\pi^2(6+\pi)( R+q)}{4(1-\kappa(\alpha,c))^4\lambda_0^{5/2}(\alpha,c)(1-L(\alpha,c))^2}\|h\|_X^2.
\]
\end{proposition}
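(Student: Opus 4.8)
The plan is to reduce the proposition to Theorem~\ref{main theorem} applied with the Dirac sea $g=\gamma$, combined with the fact (recalled just above the proposition) that $\mathcal{E}$ is affine-linear plus a quadratic form in the density matrix, so that its Taylor expansion around any point is \emph{exact}. Note first that the assumptions $\kappa(\alpha,c)<1$ and $R<\tfrac{1}{2a(\alpha,c)}$ force $L(\alpha,c)=2a(\alpha,c)R<1$, so that Lemma~\ref{lem:retra} and Theorem~\ref{main theorem} are at our disposal. The first step is to observe that $E(\gamma)=\mathcal{E}(\gamma)$: since $\gamma\in\Gamma_q^+$ we have $T(\gamma)=P^+_\gamma\gamma P^+_\gamma=\gamma$, so the iteration $(T^n(\gamma))_n$ is stationary, $\theta(\gamma)=\gamma$, and hence $E(\gamma)=\mathcal{E}(\theta(\gamma))=\mathcal{E}(\gamma)$ (equivalently, this is Theorem~\ref{main theorem} with $g=\gamma$, whose right-hand side then vanishes).

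Next I would apply Theorem~\ref{main theorem} to the perturbed state $\gamma+th$, still with $g=\gamma$. This is legitimate: $\Gamma_q^+\subset\Gamma_q$, so $g=\gamma$ is an admissible Dirac sea; combining $P^+_\gamma\gamma P^+_\gamma=\gamma$ with the hypothesis $P^+_\gamma h P^+_\gamma=h$ and linearity gives $P^+_\gamma(\gamma+th)P^+_\gamma=\gamma+th$; and $\gamma+th\in\mathcal{U}_R\subset\Gamma_q$ by assumption. Using $\|\gamma-(\gamma+th)\|_X=|t|\,\|h\|_X$, Theorem~\ref{main theorem} then gives
\[
|E(\gamma+th)-\mathcal{E}(\gamma+th)|\le C\,\frac{\alpha^2}{c^2}\,t^2\,\|h\|_X^2,\qquad C:=\frac{5\pi^2(6+\pi)(R+q)}{4(1-\kappa(\alpha,c))^4\lambda_0^{5/2}(\alpha,c)(1-L(\alpha,c))^2}.
\]

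Finally, I would combine this estimate with the exact quadratic expansion $\mathcal{E}(\gamma+th)=\mathcal{E}(\gamma)+t\,\Tr[(\D_\gamma-c^2)h]+\tfrac{\alpha t^2}{2}\Tr[W_h h]$ and with $\mathcal{E}(\gamma)=E(\gamma)$ from the first step. Setting $Error_{\gamma}(h,t):=\big(E(\gamma+th)-\mathcal{E}(\gamma+th)\big)/(\alpha_c^2 t^2)$ for $t\neq0$ (the case $t=0$ being trivial, since then both sides of \eqref{eq:sed-diff} reduce to $E(\gamma)$), the previous display produces \eqref{eq:sed-diff} together with the bound $|Error_{\gamma}(h,t)|\le C\,\|h\|_X^2$, which is exactly the claim.

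I do not expect a genuine obstacle here: the proposition is essentially a specialization of Theorem~\ref{main theorem}, and all the analytic work sits there. The only points requiring care are the two membership verifications in the middle step — that $g=\gamma$ is a permissible Dirac sea and that the perturbed state $\gamma+th$ remains supported on the range of $P^+_\gamma$ — which are precisely what let one take $\|g-\gamma\|_X=|t|\,\|h\|_X$ when invoking Theorem~\ref{main theorem}.
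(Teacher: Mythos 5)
Your proposal is correct and follows essentially the same route as the paper: note that $\gamma\in\mathcal{U}_R\cap\Gamma_q^+$ gives $\theta(\gamma)=\gamma$ hence $E(\gamma)=\mathcal{E}(\gamma)$, use the exact quadratic expansion of $\mathcal{E}$, and control $E(\gamma+th)-\mathcal{E}(\gamma+th)$ by Theorem \ref{main theorem} with $g=\gamma$, which applies because $P^+_{\gamma}(\gamma+th)P^+_{\gamma}=\gamma+th$ and yields the stated constant after dividing by $\alpha_c^2t^2$. The membership checks you flag are exactly the points the paper relies on, so there is nothing to add.
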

\begin{proof}
Let $Error_{\gamma}(h,t):=\frac{1}{\alpha_c^2 t^2}[E(\gamma+th)-\mathcal{E}(\gamma+th)]$. Notice that $\gamma\in \mathcal{U}_R\cap \Gamma_q^+$ implies that $\theta(\gamma)=\gamma$ and $E(\gamma)=\mathcal{E}(\gamma)$. Then
\[
E(\gamma+th)=E(\gamma)+\mathcal{E}(\gamma+th)-\mathcal{E}(\gamma)+t^2\alpha_c^2 Error_\gamma(h,t).
\]
On the other hand, 
\[
\mathcal{E}(\gamma+th)-\mathcal{E}(\gamma)=t\Tr [(\D_{\gamma}-c^2)h]+\frac{\alpha t^2}{2}\Tr[W_h h].
\]
Hence \eqref{eq:sed-diff}. The boundedness of $Error_{\gamma}(h,t)$ follows from Theorem \ref{main theorem} directly as $\gamma+th=P^+_{\gamma}(\gamma+th)P^+_\gamma$.
\end{proof}

\begin{remark}
It is shown in \cite[Theorem 2.10]{sere} that the retraction $\theta$ is differentiable and its differential is bounded and uniformly continuous on $\mathcal{U}_R$. Moreover,
\begin{align}\label{eq:derivtheta}
    d\theta(\gamma)h=P^+_{\gamma}hP^+_{\gamma}+b_\gamma(h)+b_\gamma(h)^*,
\end{align}
where $b_\gamma(h)=P^+_{\gamma}b_\gamma(h)P^-_{\gamma}$. As a result, if $h=P^+_{\gamma}h P^+_{\gamma}$,
\[
E(\gamma+th)-E(\gamma)=t\Tr [(D_{\gamma}-c^2)h]+ t e_{\gamma}(h,t)
\]
where $e_{\gamma}(h,t)$ is equicontinuous with respect to $t\in [0,t_0]$ for some $t_0$ small enough. Compared to \cite{sere}, we can get the second differentiability of the DF energy $E(\cdot)$ w.r.t. $t$. 

Actually, according to \eqref{eq:derivtheta}, provided $h\in X$ such that $h=P^+_{\gamma}hP^+_{\gamma}$, one can prove 
\[
\|P^+_{\gamma}d^2\theta(\gamma)[h,h]P^+_{\gamma}\|_{X}+\|P^-_{\gamma}d^2\theta(\gamma)[h,h]P^-_{\gamma}\|_{X}<C(\alpha,c)\alpha_c^2\|h\|_X^2,
\]
from which we can also deduce Proposition \ref{approx} for  $t\in [0,t_0]$ for some $t_0$ small enough.
\end{remark}

\subsection{There are no unfilled shells in the Dirac--Fock theory}\label{sec:unfill}
In \cite[Theorem 4.6]{barbaroux2005hartree}, it has been proved that there are no unfilled shells in the ep-HF model. The proof is based on the second-order expansion of the ep-HF functional. Here we are going to use Proposition \ref{approx} to study the same property for the DF theory for $\alpha$ is small enough or $c$ is large enough: we turn to the
\begin{proof}[Proof of Theorem \ref{th:unfill}]
To prove the no-unfilled shell property, we only consider the non-relativistic regime: $\alpha q<Z$, and $c>2Z$ is large enough. The weak electron--electron interaction regime can be treated in the same manner and we only need to replace \cite[Theorem 3]{esteban2001nonrelativistic} by \cite[Lemma 2.1]{esteban1999solutions} in the proof.

For clarity, we add the superscript $c$ to the quantities depending on $c$. We argue by contradiction and assume that there is a sequence $c_n\to+\infty$ such that $\gamma_*^{c_n}\neq \mathbbm{1}_{(0,\nu^{c_n}]}(\D_{\gamma_*^{c_n}}^{c_n})$.

The idea of the proof is essentially the same as in \cite{PhysRevLett.72.2981, barbaroux2005hartree}: we shall find a sequence of density matrix $(h^{c_n})_n$ such that up to subsequences and for $t\in (0,t_0]$ with $t_0>0$ small enough,
\[
0\leq E^{c_n}(\gamma_*^{c_n}+th^{c_n})-E_q^{c_n}<0.
\]
However, compared to \cite{PhysRevLett.72.2981, barbaroux2005hartree}, the dependence on $c_n$ complicates the proof. As the velocity of light $c_n$ varies, the minimizers $\gamma_*^{c_n}$ will change as well. To reach the contradiction, we first need to check the existence of the sequences $(h^{c_n})_n$ and $(\gamma_*^{c_n}+th^{c_n})_n$ for any $t\in (0,t_0]$. To do so, we need the spectral analysis of $\D^{c_n}_{\gamma_*^{c_n}}$. Thus the proof is separated into 3 steps. In Step 1, we summarize the spectral properties of the DF operator $\D_{\gamma_*^{c_n}}^{c_n}$. In Step 2, we construct the sequence $(h^{c_n})_n$. To use Proposition \ref{approx}, we will find a constant $R>0$ such that under Assumption \ref{ass:abzq}, $\gamma_*^{c_n}\in \mathcal{U}_R$ and $\gamma_*^{c_n}+th^{c_n}\in \mathcal{U}_R$. Finally in Step 3, we reach the contradiction.

\medskip

\textbf{Step 1. Spectral analysis of $\D_{\gamma_*^{c_n}}^{c_n}$.} Here we use the following.
\begin{lemma}\cite[Lemma 3.4]{sere}\label{lem:sigma}
   Under Assumption \ref{ass:abzq}, there are constant $\tau<0$ independent of $c_n$ and integer $M>q$ independent of $c_n$ such that, for any $\gamma\in \Gamma_{q}$, the mean-filed operator $\D_{\gamma}^{c_n}$ has at least $q$ eigenvalues (counted with multiplicity) in the interval $[0,c^2-\tau]$ and has at most $M$ eigenvalues in $[0,1-\frac{\tau}{2}]$.
\end{lemma}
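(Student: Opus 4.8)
\medskip
\noindent\emph{Proof plan.} The plan is to remove the $c$-dependence by a dilation and then combine a uniform relative-boundedness estimate for the mean-field potential with a variational construction of bound states. Conjugating by the $L^2$-unitary dilation $(U_c\phi)(x):=c^{3/2}\phi(cx)$, a direct computation gives $U_{c_n}^{*}\D_{\gamma}^{c_n}U_{c_n}=c_n^{2}\,\widetilde\D_{\widetilde\gamma}$ with $\widetilde\D_{\widetilde\gamma}=\D_1-\widetilde V+\alpha_{c_n}W_{\widetilde\gamma}$ built from the $c$-independent matrices, with nuclear coupling $Z_{c_n}$, electron--electron coupling $\alpha_{c_n}$, and $\widetilde\gamma:=U_{c_n}^{*}\gamma U_{c_n}\in\Gamma_q$ (the dilation preserves $0\le\gamma\le\mathbbm{1}$, $\Tr\gamma\le q$, and $X$). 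Hence $\sigma(\D_{\gamma}^{c_n})=c_n^{2}\,\sigma(\widetilde\D_{\widetilde\gamma})$; fixing $\tau=-1$, the interval $[0,c_n^{2}-\tau]$ contains $c_n^{2}[0,1]$ while $[0,1-\tfrac{\tau}{2}]=[0,\tfrac32]$ is a \emph{fixed} window, so it suffices to show, uniformly in $n$ and in $\widetilde\gamma\in\Gamma_q$, that $\widetilde\D_{\widetilde\gamma}$ has at least $q$ eigenvalues in $(0,1)$ and that at most $M$ eigenvalues of $\D_\gamma^{c_n}$ lie in $[0,\tfrac32]$.

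For the uniform spectral structure I would invoke the Kato inequality $|x|^{-1}\le\tfrac{\pi}{2}|p|$ recalled in the Appendix, the operator inequality $0\le W_{\widetilde\gamma}\le\rho_{\widetilde\gamma}*|x|^{-1}$ (valid for $\widetilde\gamma\ge0$), and $\Tr\widetilde\gamma\le q$ to see that $-\widetilde V+\alpha_{c_n}W_{\widetilde\gamma}$ is $|\D_1|$-form bounded with relative bound $\le\kappa(\alpha,c_n)<1$, uniformly in $\widetilde\gamma\in\Gamma_q$; by Remark \ref{rem:ass}, $1-\kappa(\alpha,c_n)$ stays bounded below independently of $n$. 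Hence $\widetilde\D_{\widetilde\gamma}$ is self-adjoint; since $\widetilde V$ and $W_{\widetilde\gamma}$ are $|\D_1|$-compact (they vanish at infinity, the exchange part being compact), $\sigma_{\mathrm{ess}}(\widetilde\D_{\widetilde\gamma})=(-\infty,-1]\cup[1,+\infty)$ and the spectrum in $(-1,1)$ is discrete; moreover the same estimate confines its nonnegative discrete part to $[\lambda_0(\alpha,c_n),1)$ with $\lambda_0(\alpha,c_n)\ge C>0$ uniformly (the constant of Lemma \ref{lem:retra}). Unrescaling, every gap eigenvalue of $\D_{\gamma}^{c_n}$ lies in $[c_n^{2}\lambda_0(\alpha,c_n),c_n^{2})$, hence is nonnegative; in particular, as soon as $c_n^{2}\lambda_0(\alpha,c_n)>\tfrac32$ — that is, for all but finitely many $n$, since $c_n\to+\infty$ — there is no eigenvalue of $\D_\gamma^{c_n}$ in $[0,\tfrac32]$, and for the remaining finitely many $n$ a Birman--Schwinger bound (using the uniform relative bound once more) controls the number of such eigenvalues by a finite quantity depending only on $c_n$ and the couplings; taking $M$ to be $q+1$ plus the maximum of these numbers settles the ``at most $M$'' assertion.

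It remains to exhibit $\ge q$ eigenvalues of $\widetilde\D_{\widetilde\gamma}$ in $(0,1)$, which is the main point. I would use the Esteban--S\'er\'e min--max characterization of the eigenvalues of a gapped Dirac operator (legitimate since the relative bound is $<1$): it is enough to exhibit a $q$-dimensional subspace on which the associated functional stays below $1$. Take $q$ \emph{radial}, orthonormal, smooth functions $\varphi_1,\dots,\varphi_q$ with pairwise disjoint supports, each supported in an annulus far from $\mathrm{supp}\,\widetilde\mu$. For $\psi$ in their span, $|\psi|^{2}$ is radial, so Newton's theorem gives $(|\psi|^{2}*|x|^{-1})(y)\le(|\psi|^{2}*|x|^{-1})(0)=\int|x|^{-1}|\psi|^{2}$ for every $y$; dropping the nonnegative exchange term,
\[
\an{\psi,(-\widetilde V+\alpha_{c_n}W_{\widetilde\gamma})\psi}\le\an{\psi,(-\widetilde V+\alpha_{c_n}\rho_{\widetilde\gamma}*|x|^{-1})\psi}\le-\delta\int\frac{|\psi|^{2}}{|x|},
\]
for some $\delta>0$ independent of $n$ and of $\widetilde\gamma$ — one may take $\delta$ as close to $Z_{c_n}-\alpha_{c_n}q>0$ (positive by Assumption \ref{ass:abzq}) as one wishes by pushing the supports far from $\mathrm{supp}\,\widetilde\mu$. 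Thus on this subspace $\widetilde\D_{\widetilde\gamma}$ sees a purely attractive Coulomb potential of $c$-independent strength and the mean-field term costs \emph{nothing}; placing the annuli at radii $\gg\delta^{-1}$ makes the kinetic cost and the relativistic and positive-energy-projection corrections (small because such trial states have low momentum) strictly dominated, so that the $q$-th min--max value of $\widetilde\D_{\widetilde\gamma}$ is $<1$. With the confinement above, $\widetilde\D_{\widetilde\gamma}$ then has $\ge q$ eigenvalues in $[\lambda_0(\alpha,c_n),1)\subset(0,1)$, and rescaling gives $\ge q$ eigenvalues of $\D_{\gamma}^{c_n}$ in $[0,c_n^{2})\subset[0,c_n^{2}-\tau]$.

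The hard step is precisely the uniformity in $\gamma\in\Gamma_q$: a general element of $\Gamma_q$ need not resemble a physical state, so one has to control the exchange operator $R_{\widetilde\gamma}$ and the tail of $\rho_{\widetilde\gamma}*|x|^{-1}$ tightly enough both to preserve the net Coulomb attraction of strength $Z-\alpha q>0$ (the radial trial functions together with Newton's theorem are what make the mean-field term lossless here) and to keep the eigenvalue count finite, all while — the Dirac operator being unbounded below — running the whole argument through the Esteban--S\'er\'e min--max rather than a naive Rayleigh quotient; checking that the dilation is compatible with the $X$- and $X_c$-norms entering the hypotheses and Lemma \ref{lem:retra} is minor additional bookkeeping.
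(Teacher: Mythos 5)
There is a genuine gap, and it starts with how you read the statement. The signs/scalings in the lemma as printed are typos inherited from \cite[Lemma 3.4]{sere}: the intended (and needed) assertion is that there exist a \emph{positive} $\tau$ and an integer $M>q$, both independent of $c_n$ and of $\gamma\in\Gamma_q$, such that $\D^{c_n}_\gamma$ has at least $q$ eigenvalues in $[0,c_n^2-\tau]$ and at most $M$ eigenvalues in $[0,c_n^2-\tfrac{\tau}{2}]$ (the ``$1$'' in the second window is $c^2$ in S\'er\'e's units). This is exactly what is used in Step 1 of the proof of Theorem \ref{th:unfill}: the Fermi level $\nu^{c_n}$ of a minimizer lies below $c_n^2-\tau$, so $\rank(\gamma_*^{c_n})$ is bounded by the number of eigenvalues below $c_n^2-\tfrac{\tau}{2}$, i.e.\ by $M$, as in \eqref{eq:ran}. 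Your literal reading ($\tau=-1$, fixed window $[0,\tfrac32]$) trivializes both halves: $[0,c_n^2+1]$ contains the whole gap, and your own observation that the gap estimate \eqref{eq:5.1-5} empties $[0,\tfrac32]$ for large $c_n$ shows the second assertion then carries no information — in particular it could not yield the rank bound \eqref{eq:ran}, since the spectral projector onto $(0,\tfrac32]$ would have rank $0$ while $\rank(\gamma_*^{c_n})\geq q$. So what you prove is not the statement the paper needs.

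Under the intended reading your argument does not close, and the rescaling actually works against you. After dilation the couplings become $Z_{c_n}$ and $\alpha_{c_n}$, which tend to $0$; your claim that the trial subspace sees ``a purely attractive Coulomb potential of $c$-independent strength'' is therefore incorrect — the net attraction is $\delta\approx(Z-\alpha q)/c_n\to 0$, the binding in the rescaled frame is only $O(c_n^{-2})$, and concluding merely that the $q$-th min--max level is $<1$ gives eigenvalues $<c_n^2$, not $\leq c_n^2-\tau$ with $\tau>0$ uniform. To get the uniform $\tau$ one must either track the constants and multiply back by $c_n^2$ (which you do not do), or argue as in \cite[Lemma 3.4]{sere} (the route the paper follows): take $c$-independent low-momentum trial states projected by $\Lambda^+$ and use the uniform nonrelativistic bound $\sqrt{c^4-c^2\Delta}-c^2\leq-\tfrac12\Delta$, so the kinetic cost is $O(\lambda^{-2})$ and the net Coulomb gain of strength $Z-\alpha q>0$ is of order $\lambda^{-1}$, uniformly in $c_n$ and in $\gamma\in\Gamma_q$. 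Likewise, the genuine content of the ``at most $M$'' part — a $c$-independent bound on the number of eigenvalues below $c_n^2-\tfrac{\tau}{2}$ — is not addressed by your remark that the window is eventually empty, nor by an unspecified Birman--Schwinger bound invoked only for finitely many $n$ and ``depending on $c_n$''. The paper obtains it from the operator inequality $\D^{c_n}-\tfrac{Z}{|x|}\leq\D^{c_n}_\gamma$ (valid since $W_\gamma\geq0$ and $\mu*\tfrac1{|x|}\leq\tfrac{Z}{|x|}$), which reduces the count to the explicitly known Dirac--Coulomb spectrum, whose eigenvalues below $c^2-\tfrac{\tau}{2}$ behave like $c^2-\tfrac{Z^2}{2n^2}+O(c^{-2})$ and hence are bounded in number uniformly in $c$. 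These two quantitative, $\gamma$- and $c$-uniform steps are the substance of the lemma and are missing from your proposal.
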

As a result of this lemma,
\begin{align}\label{eq:ran}
    \rank(\gamma_*^{c_n})\leq \rank(\mathbbm{1}_{(0,1-\frac{\tau}{2}]}(\D_{\gamma_*^{c_n}}^{c_n}))\leq M.
\end{align}
Then $\gamma_*^{c_n}$ can be written as
\[
\gamma_*^{c_n}=\sum_{k=1}^{M}\mu_k^{c}\left|\psi_k^{c}\right>\left<\psi_k^{c}\right|,\quad 0\leq \mu_k^{c_n}\leq 1,
\]
where $\psi_1^{c_n},\cdots, \psi_M^{c_n}$ are the first $M$ eigenfunctions of the operator $\D^{c_n}_{\gamma_*^{c_n}}$ with the eigenvalues $\nu_k^{c_n}$ such that $\nu_1^{c_n}\leq \nu_2^{c_n}\cdots \leq \nu_M^{c_n}$.

\medskip

\textbf{Step 2. Construction of  $h^{c_n}$.} Recall that $\gamma_*^{c_n}=\mathbbm{1}_{(0,\nu^{c_n})}(\D_{\gamma_*^{c_n}}^{c_n})+\delta^{c_n}$ with $0<\delta^{c_n}\leq \mathbbm{1}_{\{\nu^{c_n}\}}(\D_{\gamma_*^{c_n}}^{c_n})$. As $\gamma_*^{c_n}\neq \mathbbm{1}_{(0,\nu^{c_n}]}(\D_{\gamma_*^{c_n}}^{c_n})$, then $0< \delta^{c_n}<\mathbbm{1}_{\{\nu^{c_n}\}}(\D_{\gamma_*^{c_n}}^{c_n})$. Thus, it follows from \eqref{eq:ran} and the fact $\Tr (\delta^{c_n})\in\mathbb{N}$ that
\[
2\leq \rank(\mathbbm{1}_{\{\nu^{c_n}\}}(\D_{\gamma_*^{c_n}}^{c_n}))\leq M,
\]
and 
\[
1\leq \Tr (\delta^{c_n})\leq \Tr(\mathbbm{1}_{\{\nu^{c_n}\}}(\D_{\gamma_*^{c_n}}^{c_n}))-1=\rank(\mathbbm{1}_{\{\nu^{c_n}\}}(\D_{\gamma_*^{c_n}}^{c_n}))-1.
\]
Therefore, for some index $0\leq a^{c_n},b^{c_n}\leq M$ and $a^{c_n}\neq b^{c_n}$, there are two eigenvalues $\mu_{a^{c_n}}^{c_n}$ and $\mu_{b^{c_n}}^{c_n}$ of $\delta^{c_n}$ associated with the eigenfunctions $\psi_{a^{c_n}}^{c_n},\psi_{b^{c_n}}^{c_n}\in \ker(\D_{\gamma_*^{c_n}}^{c_n}-\nu^{c_n})$ such that
\begin{align}\label{eq:3.6}
    0\leq \mu_{a^{c_n}}^{c_n}\leq \frac{\Tr (\delta^{c_n})}{\rank(\mathbbm{1}_{\{\nu^{c_n}\}}(\D_{\gamma_*^{c_n}}^{c_n}))} \leq \frac{M-1}{M}\quad \textrm{and} \quad \frac{1}{M}\leq \frac{\Tr (\delta^{c_n})}{\rank(\mathbbm{1}_{\{\nu^{c_n}\}}(\D_{\gamma_*^{c_n}}^{c_n}))} \leq  \mu_{b^{c_n}}^{c_n}\leq 1.
\end{align}
Here we use the fact that, for any non-negative number $f_{1},\cdots,f_{J}$, there is always a constant $f_{j_1}$ and $f_{j_2}$ with $j_1,j_2\in \{1,\cdots,J\}$ such that
\begin{align*}
    f_{j_1}\leq \frac{1}{J}\sum_{j=1}^J f_j\leq f_{j_2}.
\end{align*}

Let $h^{c_n}=\left|\psi_{a^{c_n}}^{c_n}\right>\left<\psi^{c_n}_{a^{c_n}}\right|-\left|\psi_{b^{c_n}}^{c_n}\right>\left<\psi^{c_n}_{b^{c_n}}\right|$, and we take $t_0=\frac{1}{M}$. Then for $t\in (0,\frac{1}{M}]$, we have $\gamma_*^{c_n}+th^{c_n}\in \Gamma_q$. We are going to fix a constant $R>0$ independent of $c_n$ such that $\gamma_*^{c_n}\in \mathcal{U}_R^{c_n}$ and $\gamma_*^{c_n}+th^{c_n}\in\mathcal{U}_R^{c_n}$.

According to Lemma \ref{lem:unibound}, 
\[
\|\gamma_*^{c_n}\|_X\leq K^2q.
\]
It is easy to see that when $R>Kq$, we have $\gamma_*^{c_n}\in\mathcal{U}_R^{c_n}$ since $T_{c_n}(\gamma_*^{c_n})=\gamma_*^{c_n}$ and
\[
\frac{1}{c_n}\|\gamma_*^{c_n}|\D^{c_n}|^{1/2}\|_{\sigma_1}\leq \|\gamma_*^{c_n}(1-\Delta)^{1/4}\|_{\sigma_1}\leq q^{1/2}\|\gamma_*^{c_n}\|_X^{1/2}\leq qK<R.
\]

We are going to find the constant $R$ such that $\gamma_*^{c_n}+th^{c_n}\in \mathcal{U}_R^{c_n}$. For simplicity, let $\gamma_t^{c_n}:=\gamma_*^{c_n}+th^{c_n}$. As $0\leq \gamma_t^{c_n}\leq \mathbbm{1}_{(0,c_n^2)}(\D_{\gamma_*^{c_n}}^{c_n})$ and $\gamma_t^{c_n}\in \Gamma_q$, by Lemma \ref{lem:unibound} again,
\[
\frac{1}{c_n}\|\gamma_t^{c_n} |\D^{c_n}|^{1/2}\|_{\sigma_1}\leq K q.
\]
Then by Lemma \ref{lem:T-I} below and under Assumption \ref{ass:abzq}, there exists a constant $C$ such that
\begin{align*}
  \frac{A(\alpha,c_n)}{c^2}\|T_{c_n}(\gamma_t^{c_n})-\gamma_t^{c_n}\|_{X_{c_n}}
    &\leq C A(\alpha,c_n)R\frac{\alpha_{c_n}}{c_n^2} t\|h^{c_n}\|_X\\
    &\leq CqK^2 A(\alpha,c_n)R\frac{\alpha_{c_n}}{c_n^2}.
\end{align*}
We choose $R=2(1+K^2)q$. Then for $c_n$ large enough, according to Remark \ref{rem:ass}, $\frac{\pi}{4}\alpha_{c_n}\leq a(\alpha,c_n)\leq C\alpha_{c_n}$, and $A(\alpha,c_n)\leq 2$. Thus for $c_n$ sufficiently large, we infer
\[
\frac{A(\alpha,c_n)}{c^2}\|T_{c_n}(\gamma_t^{c_n})-\gamma_t^{c_n}\|_{X_{c_n}}\leq \frac{Cq \alpha}{c_n^3}R\leq \frac{R}{2}.
\]
Therefore,
\begin{align*}
    \MoveEqLeft \frac{1}{c_n}\|\gamma^{c_n}_t |\D^{c_n}|^{1/2}\|_{\sigma_1}+\frac{A(\alpha,c_n)}{c_n^2}\|T_{c_n}(\gamma^{c_n}_t)-\gamma_t^{c_n}\|_{X_{c_n}}\leq \frac{R}{2}+\frac{(1+K^2)q}{2}<R.
\end{align*}
Thus there is a constant $R>0$ independent of $c_n$ such that under Assumption \ref{ass:abzq}, $\gamma^{(\gamma_*)}\in \mathcal{U}_R^{c_n}$.

\medskip

\textbf{Step 3. Contradiction.} Denoting $\psi^{c_n}_{j}:=(f^n_{j,\alpha})_{q\leq \alpha\leq 4}$ for any $1\leq j\leq M$. Thanks to Proposition \ref{approx}, we have
    \begin{align}\label{eq:3.7}
    \MoveEqLeft 0\leq E^{c_n}(\gamma^{c_n}_*+th^{c_n})-E^{c_n}_q=E^{c_n}(\gamma^{c_n}_*+th^{c_n})-E^{c_n}(\gamma_*^{c_n})\notag\\
    &=(\nu^{c_n}-\nu^{c_n})t+\frac{\alpha t^2}{2}\Tr[W_{h^{c_n}} h^{c_n}]+t^2\alpha_{c_n}^2  Error^{c_n}_{\gamma_*^{c_n}}(h^{c_n})\notag\\
    &\leq C(\alpha,c) t^2(R+q)\alpha_{c_n}^2-\frac{\alpha t^2}{2}\int\limits_{\mathbb{R}^3\times \mathbb{R}^3}\sum_{\alpha,\beta} \frac{\left|\det\begin{pmatrix}f^n_{a^{c_n},\alpha}(x) & f^n_{b^{c_n},\beta}(x)\\ f^n_{a^{c_n},\beta}(y) & f^n_{b^{c_n},\beta}(y)\end{pmatrix}\right|^2}{|x-y|}dxdy\notag\\
    &\leq C\frac{\alpha^2t^2}{{c_n}^2}-\frac{\alpha t^2}{2}\min_{1\leq j<k\leq M}\int_{\mathbb{R}^3\times \mathbb{R}^3}\sum_{\alpha,\beta} \frac{\left|\det\begin{pmatrix}f^n_{j,\alpha}(x) & f^n_{k,\beta}(x)\\ f^n_{j,\beta}(y) & f^n_{k,\beta}(y)\end{pmatrix}\right|^2}{|x-y|}dxdy.
\end{align}

According to the properties of $\sigma_k^{c_n}$, we also have $\lim_{n\to+\infty}\nu^{c_n}-{c_n}^2\leq \lim_{n\to +\infty}\sigma_M^{c_n}-{c_n}^2<0$.  At this point, arguing as for \cite[Theorem 3]{esteban2001nonrelativistic}, up to subsequences, there is a sequence of functions $(\psi_k)_{1\leq k\leq M}$ such that $((\psi_k^{c_n})_{1\leq k\leq M})_n\to (\psi_k)_{1\leq k\leq M}$ in $H^{1}(\mathbb{R}^3;\mathbb{C}^4)$. Thanks to the positive definiteness of $\frac{1}{|x|}$, up to subsequences, for $c'$ large enough, there is a constant $C'$ such that
\begin{align}\label{eq:3.8}
    \inf_{{c_n}>c'}\min_{1\leq i<j\leq M}\int_{\mathbb{R}^3\times \mathbb{R}^3}\sum_{\alpha,\beta} \frac{\left|\det\begin{pmatrix}f^n_{j,\alpha}(x) & f^n_{k,\beta}(x)\\ f^n_{j,\beta}(y) & f^n_{k,\beta}(y)\end{pmatrix}\right|^2}{|x-y|}dxdy\geq C'>0.
\end{align}
Inserting \eqref{eq:3.8} into \eqref{eq:3.7}, we get that up to subsequences, for $c_n$ large enough,
\[
0\leq C\frac{\alpha^2t^2}{c_n^2}-C'\frac{\alpha t^2}{2}<0,
\]
reaching a contradiction. As a consequence, for any $c$ large enough (i.e., $c>c'$), the minimizer can be rewritten as 
    \[
    \gamma_*^{c}=\mathbbm{1}_{(0,\nu^{c}]}(\D^{c}_{\gamma_*^{c}}).
\]
This ends the proof.
\end{proof}
\begin{remark}\label{rem:unfill}
Unfortunately, due to Lemma \ref{lem:sigma}, we can not reach the case $\alpha q=Z$. According to Theorem \ref{th:1}, when $\alpha q=Z$, the case $\nu=c^2$ may occur. In this case, if Theorem \ref{th:unfill} still holds (i.e., $\gamma_*^c=\mathbbm{1}_{(0,c^2]}(\D_{\gamma_*^c}^c)$), then $\Tr (\gamma_*^c)=+\infty$ which is impossible. However, once we have shown that $\nu<c^2$ strictly when $\alpha q=Z$, the no-unfilled shell property can be reached.
\end{remark}

We now use the no-unfilled shells property to prove the following.
\begin{proof}[Proof of Corollary \ref{cor:equiv}]
Let $\gamma_*$ be a minimizer of $E_{q}$. Actually, $\gamma_*=\mathbbm{1}_{(0,\nu]}(\D_{\gamma_*})$ is a projector and $\Tr(\gamma_*)=q$. Then the minimizer $\gamma_*$ can be rewritten as 
\[
\gamma_*:=\sum_{n=1}^q\left|u_n\right>\left<u_n\right|.
\]
Then $\Phi_*=(u_1,\cdots,u_n)\in G_q(H^{1/2})$ and $\Phi_*$ solves the DF equations. Consequently, $E_q=\mathcal{E}(\gamma_*)\leq E_{w,q}$. 

On the other hand, let $\Phi_*:=(u_1,\cdots,u_q)$ be a minimizer of $E_{w,q}$. Then $\Phi_*$ is a solution of the DF equation. Thus $P^+_{\gamma_{\Phi_*}}\gamma_{\Phi_*}P^+_{\gamma_{\Phi_*}}=\gamma_{\Phi_*}$. Hence $\gamma_{\Phi_*}\in \Gamma_q^+$, and $E_{w,q}=\mathcal{E}(\gamma_{\Phi_*})\leq E_q$. As a result, we know that $E_q=E_{w,q}$ under Assumption \ref{ass:abzq}.
\end{proof}

\section{DF model is an approximation of the ep-HF model}\label{sec:DF<ep-HF}
We are in the position to prove Theorem \ref{th:error bound}. It is an immediate consequence of \eqref{eq:low} and the following proposition.

\begin{proposition}[DF model is an approximation of the ep-HF model]\label{prop:approx}
Let $\gamma_*\in \Gamma_q^+$ be a minimizer of $E_q$ and $\gamma^{(\gamma_*)}$ be a minimizer of $e^{(\gamma_*)}$. Under Assumption \ref{ass:abzq}, there exists a constant $C>0$ such that
\begin{align}\label{eq:4.1}
    e_q^{(\gamma_*)}\leq E_q\leq e^{(\gamma_*)}+C\frac{\alpha^2}{c^4}.
\end{align}
\end{proposition}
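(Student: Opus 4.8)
The plan is to establish the two inequalities in \eqref{eq:4.1} separately. The left inequality $e_q^{(\gamma_*)} \leq E_q$ is almost immediate: since $\gamma_*$ is a minimizer of $E_q$, Theorem \ref{th:unfill} gives $\gamma_* = \mathbbm{1}_{(0,\nu]}(\D_{\gamma_*})$, which is a projector with $\Tr(\gamma_*) = q$ satisfying $P^+_{\gamma_*}\gamma_* P^+_{\gamma_*} = \gamma_*$ and $P^-_{\gamma_*}\gamma_* = 0$. Taking $g = \gamma_*$ in the definition of $\Gamma_q^{(g)}$, one checks directly that $\gamma_* \in \Gamma_q^{(\gamma_*)}$ (indeed $-P^-_{\gamma_*} \leq \gamma_* \leq P^+_{\gamma_*}$ and $P^+_{\gamma_*}\gamma_* P^-_{\gamma_*} = 0$, with $\Tr(\gamma_*) = q$), so $e_q^{(\gamma_*)} = \min_{\gamma \in \Gamma_q^{(\gamma_*)}} \mathcal{E}(\gamma) \leq \mathcal{E}(\gamma_*) = E_q$.

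For the right inequality, I would compare $E_q$ against the ep-HF minimizer $\gamma^{(\gamma_*)} \in \Gamma_q^{(\gamma_*)}$ provided by Theorem \ref{th:2}, which by \eqref{eq:closed-shell} satisfies $\gamma^{(\gamma_*)} = \mathbbm{1}_{(0,\nu']}(P^+_{\gamma_*}\D_{\gamma^{(\gamma_*)}}P^+_{\gamma_*})$, hence $P^+_{\gamma_*}\gamma^{(\gamma_*)}P^+_{\gamma_*} = \gamma^{(\gamma_*)}$ and $\Tr(\gamma^{(\gamma_*)}) = q$. The key point is that $\gamma^{(\gamma_*)}$ is of the form $g = \gamma_*$, $\gamma = \gamma^{(\gamma_*)}$ with $P^+_g \gamma P^+_g = \gamma$, exactly the hypothesis of Theorem \ref{main theorem}. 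First I would verify that $\gamma^{(\gamma_*)} \in \mathcal{U}_R$ for a suitable fixed $R$ independent of $\alpha, c$: this follows from the uniform bound $\|\gamma^{(\gamma_*)}\|_X \leq K^2 q$ (Lemma \ref{lem:unibound}, applied to the ep-HF operator as in the appendix) together with the estimate on $\|T(\gamma^{(\gamma_*)}) - \gamma^{(\gamma_*)}\|_{X_c}$ via Lemma \ref{lem:T-I}, mimicking Step 2 of the proof of Theorem \ref{th:unfill}. Once $\gamma^{(\gamma_*)} \in \mathcal{U}_R$, the DF energy $E(\gamma^{(\gamma_*)}) = \mathcal{E}(\theta(\gamma^{(\gamma_*)}))$ is defined, and since $\theta(\gamma^{(\gamma_*)}) \in \Gamma_q^+$ we have $E_q \leq \mathcal{E}(\theta(\gamma^{(\gamma_*)})) = E(\gamma^{(\gamma_*)})$. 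Then Theorem \ref{main theorem} bounds $|E(\gamma^{(\gamma_*)}) - \mathcal{E}(\gamma^{(\gamma_*)})|$; since $\mathcal{E}(\gamma^{(\gamma_*)}) = e^{(\gamma_*)}$, chaining gives $E_q \leq e^{(\gamma_*)} + |E(\gamma^{(\gamma_*)}) - \mathcal{E}(\gamma^{(\gamma_*)})|$.

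It remains to show this error term is $O(\alpha^2/c^4)$ rather than merely $O(\alpha^2/c^2)$, which is where the refined bound \eqref{E-E'} of Theorem \ref{main theorem} is essential. I would apply \eqref{E-E'} with $g = \gamma_*$, $\gamma = \gamma^{(\gamma_*)}$, which requires $g, \gamma \in Y = X^2$ and control of $\|g - \gamma\|_Y$ and $c\|[W_{g-\gamma}, \beta]\|_{\mathcal{B}(\mathcal{H})}$. The $Y$-regularity of both density matrices should follow from the boundedness of the eigenfunctions of $\D_{\gamma_*}$ and of $P^+_{\gamma_*}\D_{\gamma^{(\gamma_*)}}P^+_{\gamma_*}$ established in the appendix, giving a uniform bound $\|g - \gamma\|_Y \leq C$. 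The main obstacle — and the crux of the whole estimate — is showing $\|[W_{\gamma_* - \gamma^{(\gamma_*)}}, \beta]\|_{\mathcal{B}(\mathcal{H})} = O(1/c)$: since $\beta$ is the only part of $\D$ that does not commute with the off-diagonal (upper/lower spinor) structure, the commutator with the mean-field potential $W_{\gamma_* - \gamma^{(\gamma_*)}}$ should be small because both $\gamma_*$ and $\gamma^{(\gamma_*)}$ are, to leading order as $c \to \infty$, concentrated in the upper-spinor components (their lower components being $O(1/c)$ in the non-relativistic limit); I would extract this from the structure of eigenfunctions of $\D_{\gamma}$-type operators, writing each as $(\varphi, \chi)$ with $\|\chi\| = O(\|\varphi\|/c)$, so that the off-diagonal blocks of $\gamma_* - \gamma^{(\gamma_*)}$ in the Dirac representation, which are precisely what the commutator with $\beta$ sees, are $O(1/c)$ in operator norm. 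Feeding $c \cdot O(1/c) = O(1)$ and $\|g-\gamma\|_Y = O(1)$ into \eqref{E-E'}, together with $\alpha^2/c^4$ prefactor and the uniform bounds on $\kappa, \lambda_0, L$ from Remark \ref{rem:ass}, yields $E_q \leq e^{(\gamma_*)} + C\alpha^2/c^4$, completing the proof.
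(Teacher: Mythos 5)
Your proposal is correct and follows essentially the same route as the paper: the left inequality via $\gamma_*\in\Gamma_q^{(\gamma_*)}$, then membership $\gamma^{(\gamma_*)}\in\mathcal{U}_R$ checked with the appendix bounds and Lemma \ref{lem:T-I} (as in Step 2 of Theorem \ref{th:unfill}), the chain $E_q\leq E(\gamma^{(\gamma_*)})\leq e_q^{(\gamma_*)}+|E(\gamma^{(\gamma_*)})-\mathcal{E}(\gamma^{(\gamma_*)})|$, and the refined bound \eqref{E-E'} with $\|\gamma_*-\gamma^{(\gamma_*)}\|_Y=O(1)$ and $c\|[W_{\gamma_*-\gamma^{(\gamma_*)}},\beta]\|_{\mathcal{B}(\mathcal{H})}=O(1)$ obtained exactly as the paper does, from the upper/lower spinor splitting and the $O(1/c)$ bound on lower components in Lemmas \ref{lem:unibound} and \ref{lem:unibound'}.
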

\begin{proof}
By the definition of $E_q$ and $e_q^{(\gamma_*)}$ and as $\gamma_*\in \Gamma_q^{(\gamma_*)}$, it is easy to see that for any $\gamma^{(\gamma_*)}\in \mathcal{S}(\gamma_*)$,
\[
e_q^{(\gamma_*)}=\mathcal{E}(\gamma^{(\gamma_*)})\leq \mathcal{E}(\gamma_*)=E_q.
\]
Thus we just need to prove the second inequality in \eqref{eq:4.1}. 

We are going to find a constant $R>0$ independent of $\alpha$ and $c$ such that for $\alpha$ sufficiently small and $c$ sufficiently large, $\gamma^{(\gamma_*)}\in\mathcal{U}_R$. Once $R>0$ is chosen, the second inequality follows from Theorem \ref{main theorem}: as $P^+_{\gamma_*}\gamma^{(\gamma_*)}P^+_{\gamma_*}=\gamma^{(\gamma_*)}$,
\[
|E(\gamma^{(\gamma_*)})-\mathcal{E}(\gamma^{(g)}) |\leq C\frac{\alpha^2}{c^4}\left(\|\gamma_*-\gamma^{(\gamma_*)}\|_Y + c\|[W_{\gamma_*-\gamma^{(\gamma_*)}},\beta]\|_{\mathcal{B}(\mathcal{H})}\right)^2.
\]
According to Lemma \ref{lem:unibound} and Lemma \ref{lem:unibound'}, we have
\begin{align*}
    \|\gamma^{(\gamma_*)}-\gamma_*\|_Y\leq \|\gamma^{(\gamma_*)}\|_Y+\|\gamma_*\|_Y\leq 2K^2q.
\end{align*}
On the other hand, notice that for any function $u\in\mathcal{H}$,
\begin{align}
    [W_{\gamma_*-\gamma^{(\gamma_*)}},\beta]u =-\int_{\mathbb{R}^3}\frac{[(\gamma_*-\gamma^{(\gamma_*)})(x,y),\beta]u(y)}{|x-y|}dy
\end{align}
We rewrite $\gamma_*$ as
\[
\gamma_*=\sum_{j=1}^{+\infty} \mu_j\left|\psi_j\right>\left<\psi_j\right|
\]
where $\mu_j\geq 0$, $\sum_{j=1}^{+\infty}\mu_j=q$ and $\psi_j$ is the normalized eigenfunctions of $\D_{\gamma_*}$ with eigenvalues $0\leq\lambda_j\leq c^2$. We split $\psi_j$ in  blocks of upper and lower components:
\begin{align}
    \psi_j=\begin{pmatrix}
        \psi_j^{(1)}\\\psi_j^{(2)}
    \end{pmatrix},\quad\textrm{with}\quad \psi_j^{(1)},\;\psi_j^{(2)}:\;\mathbb{R}^3\to \mathbb{C}^2.
\end{align}
Then,
\begin{align*}
    \gamma_*(x,y)=\sum_{j=1}^{+\infty} \mu_j \begin{pmatrix}
        \psi_j^{(1)}(x)\otimes \psi_j^{(1),*}(y)& \psi_j^{(1)}(x)\otimes \psi_j^{(2),*}(y)\\ \psi_j^{(2)}(x)\otimes \psi_j^{(1),*}(y) & \psi_j^{(2)}(x)\otimes \psi_j^{(2),*}(y)
    \end{pmatrix}.
\end{align*}
Notice that
\begin{align*}
    [\gamma_*(x,y),\beta]=2\sum_{j=1}^{+\infty} \mu_j \begin{pmatrix}
        0& -\psi_j^{(1)}(x)\otimes \psi_j^{(2),*}(y)\\ \psi_j^{(2)}(x)\otimes \psi_j^{(1),*}(y) & 0
    \end{pmatrix}.
\end{align*}
According to \eqref{eq:prop:eigen1}, it is easy to see that
\begin{align*}
    c\|[W_{\gamma_*},\beta]\|_{\mathcal{B}(\mathcal{H})}\leq 8c\sum_{j=1}^\infty \mu_j \|\psi_j^{(1)}\|_{H^1}\|\psi^{(2)}_j\|_{\mathcal{H}}\leq 8KK'q.
\end{align*}
Analogously, according to \eqref{eq:prop:eigen2}, we also have
\begin{align*}
    c\|[W_{\gamma^{(\gamma_*)}},\beta]\|_{\mathcal{B}(\mathcal{H})}\leq 8KK'q.
\end{align*}
Consequently,
\begin{align*}
\MoveEqLeft E_q\leq E(\gamma^{(\gamma_*)})-\mathcal{E}(\gamma^{(\gamma_*)}+\mathcal{E}(\gamma^{(\gamma_*)})\leq  e_q^{(\gamma_*)}+|E(\gamma^{(\gamma_*)})-\mathcal{E}(\gamma^{(\gamma_*)})|\leq e_q^{(\gamma_*)} +C\frac{\alpha^2}{c^4}.
\end{align*}
Hence \eqref{eq:4.1}.

The method to find the constant $R>0$ is the same as in the proof of Theorem \ref{th:unfill}. By using Lemma \ref{lem:unibound'}, it is not difficult to see that
\[
\frac{1}{c}\|\gamma^{(\gamma_*)} |\D|^{1/2}\|_{\sigma_1}\leq \|\gamma(1-\Delta)^{1/4}\|_{\sigma_1}\leq q^{1/2}\|\gamma\|_X^{1/2}\leq K q.
\]
By Lemma \ref{lem:T-I} below, we have
\begin{align*}
    \MoveEqLeft\frac{A(\alpha,c)}{c^2}\|T(\gamma^{(\gamma_*)})-\gamma^{(\gamma_*)}\|_{X_c}\leq CK^2 R\frac{\alpha}{c^3}q.
\end{align*}
We choose $R=2(1+K^2)q$. Thus under Assumption \ref{ass:abzq}, we have
\[
\frac{A(\alpha,c)}{c^2}\|T(\gamma^{(\gamma_*)})-\gamma^{(\gamma_*)}\|_{X_c}\leq \frac{Cq \alpha}{c^3}R\leq \frac{R}{2}.
\]
Therefore,
\begin{align*}
    \MoveEqLeft \frac{1}{c}\|\gamma |\D|^{1/2}\|_{\sigma_1}+\frac{A(\alpha,c)}{c^2}\|T(\gamma^{(\gamma_*)})-\gamma^{(\gamma_*)}\|_{X_c}\leq \frac{R}{2}+\frac{(1+K^2)q}{2}<R,
\end{align*}
which means there is a constant $R>0$ such that for $\alpha$ sufficiently small and $c$ sufficiently large, $\gamma^{(g)}\in \mathcal{U}_R$.
\end{proof}

We turn now to the
\begin{proof}[Proof of Theorem \ref{th:error bound}]
Notice that $e_q^{(\gamma_*)}\leq e_q$. Then thanks to Proposition \ref{prop:approx}, we conclude that
\begin{align*}
    E_q\leq e_q+C\frac{\alpha^2}{c^4}.
\end{align*}
This gives the second inequality of \eqref{eq:error bound}. Combining with \eqref{eq:low}, we get the theorem.
\end{proof}

\section{Error bound of the DF functional and energy}\label{sec:proof2}
 This section is devoted to the proof of Theorem \ref{main theorem} which will be separated into two parts: estimate on \eqref{E-E} and estimate on \eqref{E-E'}.  For future convenience, we denote $L:=L(\alpha,c)$, $\kappa:=\kappa(\alpha,c)$ and $\lambda_0:=\lambda_0(\alpha,c)$ when there is no ambiguity.

 We first consider the error bound for any $\gamma\in \mathcal{U}_R$.
 \begin{lemma}\label{lem:error-bound}
 Let $R,Z\in\mathbb{R}^+$ and $q\in\mathbb{N}^+$ be fixed. Assume that $\kappa<1$ and $L<1$ as in Lemma \ref{lem:retra}. Let $C_{\kappa,L}:=\frac{5\pi^2}{4(1-\kappa)^2\lambda_0^{3/2}(1-L)^2}$. Then for any $\gamma\in\mathcal{U}_R$, 
\begin{align}
    |E(\gamma)-\mathcal{E}(\gamma)|\leq C_{\kappa,L}(3R\alpha_c+3q\alpha_c+1)\frac{\alpha_c}{c^2}\|T(\gamma)-\gamma\|_{X_c}^2+3\|P^-_{\gamma}\gamma P^-_{\gamma}\|_{X_c}
\end{align}
 \end{lemma}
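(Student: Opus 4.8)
\textbf{Proof strategy for Lemma \ref{lem:error-bound}.} The plan is to control the difference $E(\gamma)-\mathcal{E}(\gamma)=\mathcal{E}(\theta(\gamma))-\mathcal{E}(\gamma)$ by writing it as a telescoping sum along the iteration $\gamma, T(\gamma), T^2(\gamma),\dots$ converging to $\theta(\gamma)$, and estimating each increment $\mathcal{E}(T^{n+1}(\gamma))-\mathcal{E}(T^n(\gamma))$. First I would set $\gamma_n := T^n(\gamma)$, so $\gamma_0=\gamma$, $\gamma_n\to\theta(\gamma)$ in $X_c$ by Lemma \ref{lem:retra}, and
\[
E(\gamma)-\mathcal{E}(\gamma) = \bigl(\mathcal{E}(\gamma_1)-\mathcal{E}(\gamma_0)\bigr) + \sum_{n\ge 1}\bigl(\mathcal{E}(\gamma_{n+1})-\mathcal{E}(\gamma_n)\bigr).
\]
The very first step $\mathcal{E}(\gamma_1)-\mathcal{E}(\gamma_0)$ must be separated out because $\gamma_0=\gamma$ need not satisfy $P^-_\gamma\gamma P^-_\gamma=0$; this is where the extra term $3\|P^-_\gamma\gamma P^-_\gamma\|_{X_c}$ comes from. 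For $n\ge 1$, $\gamma_n=P^+_{\gamma_{n-1}}\gamma_{n-1}P^+_{\gamma_{n-1}}$ lies in the ``positive'' part, and one should get a quadratic bound of the form $|\mathcal{E}(\gamma_{n+1})-\mathcal{E}(\gamma_n)|\lesssim (\text{const})\,\alpha_c c^{-2}\|\gamma_{n+1}-\gamma_n\|_{X_c}\cdot(\text{something bounded})$, or better yet purely quadratic in $\|T(\gamma_n)-\gamma_n\|_{X_c}$.

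The key algebraic input is a Taylor-type expansion of $\mathcal{E}$ around $\gamma_n$. Since $\mathcal{E}(\mu)=\Tr((\D-V)\mu)+\tfrac{\alpha}{2}D(\mu,\mu)-c^2\Tr(\mu)$ is \emph{quadratic} in $\mu$ (the exchange-direct term is a bounded quadratic form, by Kato's inequality / Lemma-type bounds recalled in the appendix), one has the exact identity
\[
\mathcal{E}(\gamma_{n+1})-\mathcal{E}(\gamma_n) = \Tr\bigl((\D_{\gamma_n}-c^2)(\gamma_{n+1}-\gamma_n)\bigr) + \tfrac{\alpha}{2}\,Q(\gamma_{n+1}-\gamma_n),
\]
where $Q(h)=D(\rho_h,\rho_h)-\iint \Tr_{\mathbb{C}^4}(h(x,y)h(y,x))/|x-y|\,dxdy$ is the quadratic part. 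The second term is bounded by $C\alpha_c c^{-1}\|\gamma_{n+1}-\gamma_n\|_{X_c}^2$ using the appendix inequalities (the Hardy/Kato-type bound $\|W_h\|\lesssim$ norm of $h$, converted to $X_c$ via the factors of $c$). The linear term requires more care: because $\gamma_n\in\Gamma_q^+$-like (it satisfies $P^+_{\gamma_{n-1}}\gamma_n P^+_{\gamma_{n-1}}=\gamma_n$, not quite $P^+_{\gamma_n}\gamma_n P^+_{\gamma_n}=\gamma_n$), one must exploit that $\gamma_{n+1}-\gamma_n = T(\gamma_n)-\gamma_n$ is ``small'' and that $\D_{\gamma_n}$ is nonnegative on the relevant subspace up to an error controlled by $\|T(\gamma_{n-1})-\gamma_{n-1}\|$; concretely I would write $\Tr((\D_{\gamma_n}-c^2)(T(\gamma_n)-\gamma_n))$, use $T(\gamma_n)-\gamma_n = P^+_{\gamma_n}\gamma_n P^+_{\gamma_n}-\gamma_n = -P^-_{\gamma_n}\gamma_n P^+_{\gamma_n}-P^+_{\gamma_n}\gamma_n P^-_{\gamma_n}-P^-_{\gamma_n}\gamma_n P^-_{\gamma_n}$, and note $\Tr(\D_{\gamma_n}P^\pm_{\gamma_n}\gamma_n P^\mp_{\gamma_n})=0$ since $\D_{\gamma_n}$ commutes with $P^\pm_{\gamma_n}$, leaving only $\Tr((\D_{\gamma_n}-c^2)P^-_{\gamma_n}\gamma_n P^-_{\gamma_n})$ plus a $-c^2\Tr$ piece; each surviving piece is then $\lesssim \|P^-_{\gamma_n}\gamma_n P^-_{\gamma_n}\|_{X_c}$, which in turn (for $n\ge 1$) is quadratically small, bounded by $C\|T(\gamma_{n-1})-\gamma_{n-1}\|_{X_c}^2$ up to the $c$-dependent constant, because $P^-_{\gamma_n}$ nearly annihilates $\gamma_n=P^+_{\gamma_{n-1}}\gamma_{n-1}P^+_{\gamma_{n-1}}$ when $P^+_{\gamma_n}$ is close to $P^+_{\gamma_{n-1}}$ (and the gap/continuity estimates from \cite{sere} quantify this closeness in terms of $\|T(\gamma_{n-1})-\gamma_{n-1}\|$).

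Having a per-step bound $|\mathcal{E}(\gamma_{n+1})-\mathcal{E}(\gamma_n)|\le C_{\kappa,L}\,(\cdots)\,\alpha_c c^{-2}\|T(\gamma_{n-1})-\gamma_{n-1}\|_{X_c}^2$ for $n\ge 1$, I would sum the geometric series using \eqref{eq:retra}: $\|T(\gamma_n)-\gamma_n\|_{X_c}\le L^n\|T(\gamma)-\gamma\|_{X_c}$, so $\sum_{n\ge 1}\|T(\gamma_{n-1})-\gamma_{n-1}\|_{X_c}^2\le \sum_{n\ge 0}L^{2n}\|T(\gamma)-\gamma\|_{X_c}^2 = (1-L^2)^{-1}\|T(\gamma)-\gamma\|_{X_c}^2\le (1-L)^{-2}\|T(\gamma)-\gamma\|_{X_c}^2$. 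Combined with the $n=0$ term (which contributes the direct/exchange quadratic piece $C\alpha_c c^{-1}\|T(\gamma)-\gamma\|_{X_c}^2$ plus the $3\|P^-_\gamma\gamma P^-_\gamma\|_{X_c}$ boundary term) and tracking the explicit constants $(1-\kappa)^{-2}$, $\lambda_0^{-3/2}$, $\pi^2$, this yields the stated inequality. \textbf{The main obstacle} I expect is the careful bookkeeping of the linear term $\Tr((\D_{\gamma_n}-c^2)(T(\gamma_n)-\gamma_n))$: showing it is genuinely quadratically small in $\|T(\gamma_{n-1})-\gamma_{n-1}\|$ (for $n\ge1$) rather than merely linear requires combining the non-negativity of $\D_{\gamma_n}$ on $\mathrm{Ran}\,P^+_{\gamma_n}$ with the operator-norm continuity of $\gamma\mapsto P^\pm_\gamma$ from \cite{sere}, and keeping every $c$-power and every $(1-\kappa)$, $\lambda_0$ factor consistent with the constants in \eqref{eq:retra} and in the appendix inequalities.
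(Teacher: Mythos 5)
Your proposal is correct in substance but organizes the argument differently from the paper, and the difference is worth recording. The paper expands $\mathcal{E}(\theta(\gamma))-\mathcal{E}(\gamma)$ \emph{once}, around $\gamma$, and then must show that the diagonal blocks $P^+_{\gamma}(\theta(\gamma)-T(\gamma))P^+_{\gamma}$ and $P^-_{\gamma}\theta(\gamma)P^-_{\gamma}$ are quadratically small in $\|T(\gamma)-\gamma\|_{X_c}$; this is Lemma \ref{lem:t-T}, whose proof telescopes at the operator level and crucially needs the \emph{second-order} information on $\gamma\mapsto P^+_{\gamma}$, namely the cancellation \eqref{eq:dP} ($P^{\pm}_{\gamma}(dP^+_{\gamma}h)P^{\pm}_{\gamma}=0$) together with the quadratic remainder bound \eqref{eq:P-P-dP}. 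You instead telescope the energy itself along $\gamma_n=T^n(\gamma)$ and use the exact quadratic expansion of $\mathcal{E}$ around $\gamma_n$ at each step; since $\D_{\gamma_n}$ commutes with $P^{\pm}_{\gamma_n}$, the off-diagonal blocks of $T(\gamma_n)-\gamma_n$ drop out of the trace and only $P^-_{\gamma_n}\gamma_n P^-_{\gamma_n}$ survives, which for $n\geq1$ is quadratically small by exactly the computation \eqref{eq:P-gamma-P} applied with $g=\gamma_{n-1}$ (using $P^+_{\gamma_{n-1}}\gamma_nP^+_{\gamma_{n-1}}=\gamma_n$ and only the first-order Lipschitz bound \eqref{eq:P-P}); the $n=0$ step produces the boundary term $3\|P^-_{\gamma}\gamma P^-_{\gamma}\|_{X_c}$, and the geometric decay \eqref{eq:retra} sums the series. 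So your route bypasses \eqref{eq:dP}--\eqref{eq:P-P-dP} and Lemma \ref{lem:t-T} entirely, and the constant it naturally produces has no $R\alpha_c$ term and is smaller than the stated $C_{\kappa,L}(3R\alpha_c+3q\alpha_c+1)$, so the lemma follows a fortiori; what the paper's route buys in exchange is the block estimates of Lemma \ref{lem:t-T} themselves (a quantitative comparison of $\theta(\gamma)$ with $T(\gamma)$), which are of independent interest. Two bookkeeping points you should fix when writing this up: the per-step quadratic term is $\tfrac{\alpha}{2}|\Tr[W_{h_n}h_n]|\leq\tfrac{\pi\alpha_c}{4c^2}\|h_n\|_{X_c}^2$ (via \eqref{eq:5.1-1} and $\|h_n\|_{\sigma_1}\leq c^{-2}\|h_n\|_{X_c}$), not $\alpha_c c^{-1}\|h_n\|_{X_c}^2$ as written, and the identification of the telescoping limit requires the (routine) continuity of $\mathcal{E}$ along the $X_c$-convergent sequence $\gamma_n\to\theta(\gamma)$ in $\Gamma_q$, which should be stated explicitly.
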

 This is an immediate result of the following.
 \begin{lemma}\label{lem:t-T}
Let $C_{\kappa,L}$ be given in Lemma \ref{lem:error-bound}. With the same assumptions as in Lemma \ref{lem:error-bound}, for any $\gamma\in \mathcal{U}_R$ we have
\begin{align}\label{theta-T+}
    \|P^{+}_{\gamma}(\theta(\gamma)-T(\gamma))P^{+}_{\gamma}\|_{X_c}\leq C_{\kappa,L}\frac{R\alpha_c^2}{c^2}\|T(\gamma)-\gamma\|_{X_c}^2
\end{align}
and
\begin{align}\label{theta-T-} \|P^-_{\gamma}\theta(\gamma)P^-_{\gamma}\|_{X_c}\leq C_{\kappa,L}\frac{q\alpha_c^2}{c^2}\|T(\gamma)-\gamma\|_{X_c}^2.
\end{align}
\end{lemma}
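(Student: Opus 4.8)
The plan is to exploit that the first Picard iterate $\gamma_1:=T(\gamma)=P^+_\gamma\gamma P^+_\gamma$ is already block–diagonal for the splitting $\mathcal H=\operatorname{ran}P^+_\gamma\oplus\operatorname{ran}P^-_\gamma$, while each subsequent iterate $\gamma_n:=T^n(\gamma)$ differs from the previous one only because the spectral projections $P^\pm_{\gamma_n}$ drift; since $\D_{\gamma_n}-\D_{\gamma_{n-1}}=\alpha W_{\gamma_n-\gamma_{n-1}}$ this drift is small, and sandwiching the telescoping series between $P^\pm_\gamma$ will make every term carry \emph{two} such drifts, producing a bound quadratic in $\|T(\gamma)-\gamma\|_{X_c}$. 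Concretely, set $\gamma_0:=\gamma$, $P^\pm_n:=P^\pm_{\gamma_n}$, $\Delta_n:=P^-_n-P^-_{n-1}$. By Lemma~\ref{lem:retra}, $\gamma_n\in\mathcal U_R$ for all $n$, $\gamma_n\to\theta(\gamma)$ in $X_c$, $\|\gamma_{n+1}-\gamma_n\|_{X_c}\le L^{n-1}\|\gamma_2-\gamma_1\|_{X_c}$ and $\|\gamma_n-\gamma_0\|_{X_c}\le(1-L)^{-1}\|T(\gamma)-\gamma\|_{X_c}$, and membership in $\mathcal U_R$ furnishes the a priori control of $\|\gamma_n\|_{X_c}$ (resp.\ $\Tr(\gamma_n)\le q$) used below. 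Since $\gamma_1=P^+_0\gamma_0P^+_0$ we have $P^-_0\gamma_1=\gamma_1P^-_0=0$, so
\[
P^+_0\bigl(\theta(\gamma)-T(\gamma)\bigr)P^+_0=\sum_{n\ge1}P^+_0(\gamma_{n+1}-\gamma_n)P^+_0,\qquad
P^-_0\,\theta(\gamma)\,P^-_0=\sum_{n\ge1}P^-_0(\gamma_{n+1}-\gamma_n)P^-_0 .
\]

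Next I would extract the quadratic structure algebraically. From $\gamma_n=(P^+_n+P^-_n)\gamma_n(P^+_n+P^-_n)$ one has $\gamma_n-\gamma_{n+1}=P^-_n\gamma_n+\gamma_nP^-_n-P^-_n\gamma_nP^-_n$, and because $\gamma_n=P^+_{n-1}\gamma_{n-1}P^+_{n-1}$ annihilates $P^-_{n-1}$ on either side, a short computation gives, for $n\ge1$,
\[
P^-_n\gamma_n=\Delta_n\gamma_n,\quad \gamma_nP^-_n=\gamma_n\Delta_n,\quad P^-_n\gamma_nP^-_n=\Delta_n\gamma_n\Delta_n,\quad \gamma_nP^-_0=-\gamma_n\bigl(P^-_{n-1}-P^-_0\bigr)P^-_0,
\]
together with the adjoint identities. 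Substituting these into the two series above, every summand becomes a product of $P^\pm_0$, $\gamma_n$ and two factors from $\{\Delta_k\}_{k\le n}$, \emph{except} the lone term $-P^+_0\Delta_n\gamma_0P^+_0$ (and its adjoint) coming from the $P^+_0$–series after writing $\gamma_n=\gamma_0+(\gamma_n-\gamma_0)$; summing it over $n$ telescopes to $P^+_0\bigl(P^-_{\theta(\gamma)}-P^-_0\bigr)\gamma_0P^+_0=P^+_0P^-_{\theta(\gamma)}\gamma_0P^+_0$, and the fixed–point relation $P^-_{\theta(\gamma)}\theta(\gamma)=0$ yields $P^-_{\theta(\gamma)}\gamma_0=-P^-_{\theta(\gamma)}\bigl(\theta(\gamma)-\gamma_0\bigr)$, so this term too is a product of $\bigl(P^-_{\theta(\gamma)}-P^-_0\bigr)$ and $\bigl(\theta(\gamma)-\gamma_0\bigr)$ — again two small factors.

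The remaining quantitative input is the drift estimate. By the Cauchy integral $P^-_{\gamma_j}=\frac1{2\pi}\int_{\mathbb R}(\D_{\gamma_j}-i\omega)^{-1}d\omega$ and $\D_{\gamma_n}-\D_{\gamma_{n-1}}=\alpha W_{\gamma_n-\gamma_{n-1}}$,
\[
\Delta_n=-\frac{\alpha}{2\pi}\int_{\mathbb R}(\D_{\gamma_n}-i\omega)^{-1}\,W_{\gamma_n-\gamma_{n-1}}\,(\D_{\gamma_{n-1}}-i\omega)^{-1}\,d\omega ,
\]
and combining the resolvent bounds provided by $\kappa<1$ (which keep $\operatorname{dist}(i\omega,\sigma(\D_{\gamma_j}))\gtrsim\sqrt{\lambda_0}\,c^2$ near $\omega=0$ and make the $\omega$–integral converge), the Kato–Hardy control $\tfrac1{|x|}\le\tfrac{\pi}{2c}|\D|$ of $W_\cdot$ relative to $|\D|$, and $\int_{\mathbb R}\tfrac{d\omega}{1+\omega^2}=\pi$ — this is where the factors $\tfrac{\pi}{2}$, $(1-\kappa)^{-1}$, $\lambda_0^{-1/2}$ enter — one gets, in the weighted operator norms $\bigl\||\D|^{\pm1/2}\Delta_n|\D|^{\mp1/2}\bigr\|_{\mathcal B(\mathcal H)}$ needed to absorb the $|\D|^{1/2}$–weights of $\|\cdot\|_{X_c}$, a bound of order $\tfrac{\alpha_c}{c^{2}\sqrt{(1-\kappa)\lambda_0}}\,\|\gamma_n-\gamma_{n-1}\|_{X_c}$, and similarly for $P^-_{\theta(\gamma)}-P^-_0$ with $\theta(\gamma)-\gamma_0$. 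Inserting this into the terms of the previous step — writing each three–factor product $A\gamma_mB$ as $(|\D|^{1/2}A|\D|^{-1/2})(|\D|^{1/2}\gamma_m|\D|^{1/2})(|\D|^{-1/2}B|\D|^{1/2})$, summing the geometric series $\sum_k\|\Delta_k\|$ and $\sum_k L^{k-1}$ (each contributing a $(1-L)^{-1}$), and estimating the middle factor by $\|\gamma_m\|_{X_c}$ — gives \eqref{theta-T+}. For \eqref{theta-T-} one instead keeps $\gamma_m$ between two Hilbert–Schmidt factors, absorbing the extra $P^-_0$'s there, and estimates through $\Tr(\gamma_m)\le q$, so that the coefficient $R$ is replaced by $q$; collecting all constants yields $C_{\kappa,L}=\frac{5\pi^2}{4(1-\kappa)^2\lambda_0^{3/2}(1-L)^2}$.

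The main obstacle is the drift estimate together with the correct distribution of the $|\D|^{1/2}$–weights among the two resolvents, $W_\cdot$ and $\gamma_m$, so that the $\omega$–integral converges \emph{and} the numerical constant comes out exactly as recorded in $C_{\kappa,L}$; these are precisely the weighted resolvent/potential estimates recalled in the appendix from \cite{sere}. The rest is bookkeeping: verifying that every summand of both sandwiched series carries two projection drifts — the one exceptional term being rescued by $P^-_{\theta(\gamma)}\theta(\gamma)=0$ — which is what makes the bound quadratic rather than linear in $\|T(\gamma)-\gamma\|_{X_c}$, and tracking whether the a priori size of $\gamma_m$ enters through $\|\gamma_m\|_{X_c}$ (giving $R$) or through $\Tr(\gamma_m)$ (giving $q$).
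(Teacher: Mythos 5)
Your telescoping scheme is sound as algebra, and the $P^-_\gamma$-estimate \eqref{theta-T-} would indeed come out of it (there every surviving summand really does carry two projector drifts, exactly as in the paper, which instead uses the one-line identity $P^-_\gamma\theta(\gamma)P^-_\gamma=P^-_\gamma(P^+_{\theta(\gamma)}-P^+_\gamma)\theta(\gamma)(P^+_{\theta(\gamma)}-P^+_\gamma)P^-_\gamma$). But for \eqref{theta-T+} there is a genuine gap: your claim that ``every summand becomes a product of \dots two factors from $\{\Delta_k\}$'' is false for the mixed terms. After writing $\gamma_n=\gamma_0+(\gamma_n-\gamma_0)$, the summands $P^+_0\Delta_n(\gamma_n-\gamma_0)P^+_0$ (and the telescoped boundary term $P^+_0(P^-_{\theta(\gamma)}-P^-_0)(\theta(\gamma)-\gamma_0)P^+_0$) contain only \emph{one} projector drift; the second small factor, $\gamma_n-\gamma_0$ or $\theta(\gamma)-\gamma_0$, is controlled by $(1-L)^{-1}\|T(\gamma)-\gamma\|_{X_c}$ but carries no power of $\alpha_c$. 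Since each drift contributes one factor $a(\alpha,c)/c\sim\alpha_c/c$ via \eqref{eq:P-P}, these terms are only of size $\frac{\alpha_c}{c^2}\|T(\gamma)-\gamma\|_{X_c}^2$, with no factor $R\alpha_c$. In the regime of interest ($R$ fixed, $\alpha_c\to0$, so $R\alpha_c\ll1$) this is strictly weaker than the stated bound $C_{\kappa,L}\frac{R\alpha_c^2}{c^2}\|T(\gamma)-\gamma\|_{X_c}^2$, so you cannot ``collect all constants'' to arrive at $C_{\kappa,L}$; the claimed final constant is not delivered by the outlined argument.

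The missing ingredient is precisely the mechanism the paper uses to make the single-drift (diagonal-block) terms second order: the Gateaux derivative of the projector has vanishing diagonal blocks, $P^\pm_\gamma(dP^+_\gamma h)P^\pm_\gamma=0$ (\eqref{eq:dP}), so in the term sandwiched by $P^+_{\gamma_{n-2}}$ the first-order part of $P^+_{\gamma_{n-1}}-P^+_{\gamma_{n-2}}$ drops out and only the second-order Taylor remainder survives, bounded by \eqref{eq:P-P-dP} by $\frac{\alpha_c^2}{c^3}\|\gamma_{n-1}-\gamma_{n-2}\|_{X_c}^2$; the factor $R$ then enters through $\|\gamma_{n-1}|\D|^{1/2}\|_{\sigma_1}\le cR$ (not through $\|\gamma_n-\gamma_0\|$), while the off-diagonal correction $(P^+_\gamma-P^+_{\gamma_{n-2}})(P^+_{\gamma_{n-1}}-P^+_{\gamma_{n-2}})$ genuinely has two drifts. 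Without this second-order input your purely algebraic bookkeeping cannot see the extra power of $\alpha_c$. To be fair, the weaker estimate you would obtain ($\frac{\alpha_c}{c^2}\|T(\gamma)-\gamma\|_{X_c}^2$ up to $(\kappa,L)$-constants) would still suffice, after Lemma \ref{lem:T-I} and $R\alpha_c\le 2/\pi$, to recover Theorem \ref{main theorem} with modified constants; but it does not prove Lemma \ref{lem:t-T} as stated, and the step where you assert the exact constant $C_{\kappa,L}$ is where the argument breaks.
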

We first use it to prove  Lemma \ref{lem:error-bound}.
\begin{proof}[Proof of Lemma \ref{lem:error-bound}]
Notice that
\begin{equation}\label{eq:E-Ecal}
    \begin{aligned}
   \MoveEqLeft E(\gamma)-\mathcal{E}(\gamma)=\Tr [\D_{\gamma}(\theta(\gamma)-\gamma)]+\frac{\alpha}{2} \Tr[W_{\theta(\gamma)-\gamma}(\theta(\gamma)-\gamma)]-c^2\Tr(\theta(\gamma)- \gamma).
\end{aligned}
\end{equation}
To end the proof, it suffices to calculate each term on the right-hand side separately.

\textbf{Estimate on $\Tr [\D_{\gamma}(\theta(\gamma)-\gamma)]$.} We consider the first term on the right-hand side of \eqref{eq:E-Ecal}. As $T(\gamma)=P^+_{\gamma} \gamma P^+_{\gamma}$, we have
\begin{align*}
    \Tr [\D_{\gamma}(\theta(\gamma)-\gamma)]&= \Tr [(P^+_{\gamma}+P^-_{\gamma})\D_{\gamma}(\theta(\gamma)-\gamma)(P^+_{\gamma}+P^-_{\gamma})]\\
   &=\Tr [|\D_{\gamma}|P^+_{\gamma}(\theta(\gamma)-T(\gamma))P^+_{\gamma}]-\Tr [|\D_{\gamma}|P^-_{\gamma}(\theta(\gamma)-\gamma)P^-_{\gamma}].
\end{align*}
Then by \eqref{eq:D-D} and the fact that $0\leq \kappa\leq 1$, we have
\begin{align*}
   \left| \Tr [|\D_{\gamma}|P^+_{\gamma}(\theta(\gamma)-\gamma)P^+_{\gamma}]\right|
   &\leq \||\D_\gamma|^{1/2}P^+_{\gamma}(\theta(\gamma)-T(\gamma))P^+_{\gamma}\D_\gamma|^{1/2}\|_{\sigma_1}\\
   &\leq 2\|P^+_{\gamma}(\theta(\gamma)-\gamma) P^+_{\gamma}\|_{X_c}\leq 2C_{\kappa,L}\frac{R\alpha_c^2}{c^2}\|T(\gamma)-\gamma\|_{X_c}^2.
\end{align*}
On the other hand, by \eqref{eq:D-D}, \eqref{eq:P-gamma-P}, we have
\begin{align*}
   \left| \Tr[|\D_{\gamma}|P^-_{\gamma}(\theta(\gamma)-\gamma)P^-_{\gamma}]\right|&\leq \||\D_\gamma|^{1/2}P^-_{\gamma}\theta(\gamma)P^-_{\gamma}\D_\gamma|^{1/2}\|_{\sigma_1}+\||\D_\gamma|^{1/2}P^-_{\gamma}\gamma P^-_{\gamma}\D_\gamma|^{1/2}\|_{\sigma_1}\\
   &\leq 2\left(\|P^-_{\gamma}\theta(\gamma)P^-_{\gamma}\|_{X_c}+\|P^-_{\gamma}\gamma P^-_{\gamma}\|_{X_c}\right)\\
   &\leq 2\left(C_{\kappa,L}\frac{q\alpha_c^2}{c^2}\|T(\gamma)-\gamma\|_{X_c}^2+\|P^-_{\gamma}\gamma P^-_{\gamma}\|_{X_c}\right).
\end{align*}
Then we conclude that
\begin{align}\label{eq:E-Ecal1}
    \left|\Tr [\D_{\gamma}(\theta(\gamma)-\gamma)]\right|\leq 2C_{\kappa,L}(R+q)\frac{\alpha_c^2}{c^2}\|T(\gamma)-\gamma\|_{X_c}^2+2\|P^-_{\gamma}\gamma P^-_{\gamma}\|_{X_c}.
\end{align}

\textbf{Estimate on $c^2\Tr(\theta(\gamma)- \gamma)$.} The term $c^2\Tr(\theta(\gamma)- \gamma)$ can be treated analogously. Actually,
\begin{align*}
c^2\left|\Tr [(\theta(\gamma)-\gamma)\right|&\leq  c^2\left|\Tr [|P^+_{\gamma}(\theta(\gamma)-\gamma)P^+_{\gamma}]+\Tr [P^-_{\gamma}(\theta(\gamma)-\gamma)P^-_{\gamma}]\right|\\
    &\leq \|P^+_{\gamma}(\theta(\gamma)-\gamma)P^+_{\gamma}\|_{X_c}+\|P^-_{\gamma}\theta(\gamma)P^-_{\gamma}\|_{X_c}+\|P^-_{\gamma}\gamma P^-_{\gamma}\|_{X_c}.
\end{align*}
Then proceeding as for the term $\Tr [\D_{\gamma}(\theta(\gamma)-\gamma)]$, we obtain
\begin{align}\label{eq:E-Ecal2}
    c^2\left|\Tr [(\theta(\gamma)-\gamma)\right|\leq C_{\kappa,L}(R+q)\frac{\alpha_c^2}{c^2}\|T(\gamma)-\gamma\|_{X_c}^2+\|P^-_{\gamma}\gamma P^-_{\gamma}\|_{X_c}.
\end{align}

\textbf{Estimate on $\alpha\Tr[W_{\theta(\gamma)-\gamma}(\theta(\gamma)-\gamma)]$.}  Using \eqref{eq:retra} and \eqref{eq:5.1-1}, we deduce
\begin{align}\label{eq:E-Ecal3}
  \alpha\left|\Tr[W_{\theta(\gamma)-\gamma}(\theta(\gamma)-\gamma)]\right|&\leq\frac{\pi}{2c^2}\alpha_c\|\theta(\gamma)-\gamma\|_{X_c}^2\leq \frac{\pi}{2(1-L)^2}\frac{\alpha_c}{c^2}\|T(\gamma)-\gamma\|_{X_c}^2\leq C_{\kappa,L}\frac{\alpha_c}{c^2}\|T(\gamma)-\gamma\|_{X_c}^2.
\end{align}

\textbf{Conclusion.} Gathering together \eqref{eq:E-Ecal}-\eqref{eq:E-Ecal3}, we conclude that
\begin{align*}
    \left|E(\gamma)-\mathcal{E}(\gamma)\right|\leq C_{\kappa,L}(3R\alpha_c+3q\alpha_c+1)\frac{\alpha_c}{c^2}\|T(\gamma)-\gamma\|_{X_c}^2+3\|P^-_{\gamma}\gamma P^-_{\gamma}\|_{X_c}.
\end{align*}
This gives \eqref{E-E}.
\end{proof}
 
\medskip

It remains to prove Lemma \ref{lem:t-T}. Before going further, we need the following.

\begin{proposition}
Let $\gamma,\gamma'\in \Gamma_q$ and $h\in X$. Then
\begin{align}\label{eq:dP}
    P_{\gamma}^+ (dP^+_{\gamma}h) P^+_{\gamma}=0,\quad P_{\gamma}^- (dP^+_{\gamma}h) P^-_{\gamma}=0
\end{align}
where $dP^+_{\gamma}h$ is the Gateaux derivative which is defined by 
\[
dP^+_{\gamma}h:=\lim_{t\to 0}\frac{P^+_{\gamma+th}-P^+_{\gamma}}{t}.
\]
Besides, we have
\begin{align}\label{eq:P-P-dP}
    \||\D|^{1/2}[P^+_{\gamma}-P^+_{\gamma'}-dP^+_{\gamma'}(\gamma-\gamma')]\|_{\mathcal{B}(\mathcal{H})}\leq \frac{\pi^2\alpha_c^2}{8c^3}(1-\kappa)^{-1/2}\lambda_0^{-3/2}\|\gamma-\gamma'\|_{X_c}^2.
\end{align}
\end{proposition}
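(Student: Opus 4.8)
The plan is to represent the spectral projector $P^+_\gamma=\mathbbm1_{(0,+\infty)}(\D_\gamma)$ by a resolvent integral and to differentiate and expand under the integral sign. Since $\kappa(\alpha,c)<1$, the Kato inequality $\tfrac1{|x|}\le\tfrac{\pi}{2c}|\D|$ together with Newton's theorem give $\bigl\||\D|^{-1/2}(-V+\alpha W_\gamma)|\D|^{-1/2}\bigr\|_{\mathcal B(\mathcal H)}<1$, uniformly for $\gamma\in\Gamma_q$; hence $\D_\gamma=\D-V+\alpha W_\gamma$ is self-adjoint, has $0$ in its resolvent set, and has a spectral gap around $0$ of width bounded below by a fixed multiple of $c^2\lambda_0$. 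Consequently
\[
P^+_\gamma=\frac12+\frac{1}{2\pi}\int_{\mathbb R}(\D_\gamma-i\omega)^{-1}\,d\omega
\]
(a standard principal-value representation), and since $\gamma\mapsto\D_\gamma$ is affine with $d\D_\gamma\,h=\alpha W_h$, and $\bigl\||\D|^{-1/2}W_h|\D|^{-1/2}\bigr\|_{\mathcal B(\mathcal H)}$ is controlled by $\|h\|_{X_c}$ through the inequalities recalled in the appendix and in \cite{sere}, one may differentiate under the integral, obtaining
\[
dP^+_\gamma\,h=\frac{\alpha}{2\pi}\int_{\mathbb R}(\D_\gamma-i\omega)^{-1}W_h(\D_\gamma-i\omega)^{-1}\,d\omega ,
\]
an absolutely convergent integral (the integrand decays like $|\omega|^{-2}$).

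Granting this differentiability, \eqref{eq:dP} is purely algebraic: differentiating $(P^+_\gamma)^2=P^+_\gamma$ gives $(dP^+_\gamma\,h)P^+_\gamma+P^+_\gamma(dP^+_\gamma\,h)=dP^+_\gamma\,h$, and multiplying this on both the left and the right by $P^+_\gamma$ yields $2\,P^+_\gamma(dP^+_\gamma\,h)P^+_\gamma=P^+_\gamma(dP^+_\gamma\,h)P^+_\gamma$, hence $P^+_\gamma(dP^+_\gamma\,h)P^+_\gamma=0$; the same computation applied to $P^-_\gamma=\mathbbm1-P^+_\gamma$ (with $dP^-_\gamma h=-dP^+_\gamma h$) gives $P^-_\gamma(dP^+_\gamma\,h)P^-_\gamma=0$. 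For \eqref{eq:P-P-dP} I would write $\D_\gamma=\D_{\gamma'}+\alpha W_{\gamma-\gamma'}$ and use the exact second-order resolvent identity: with $R(\omega):=(\D_{\gamma'}-i\omega)^{-1}$ and $R_\gamma(\omega):=(\D_\gamma-i\omega)^{-1}$,
\[
R_\gamma=R+\alpha\,R\,W_{\gamma-\gamma'}R+\alpha^2\,R\,W_{\gamma-\gamma'}R\,W_{\gamma-\gamma'}R_\gamma .
\]
Integrating over $\omega$ and dividing by $2\pi$, the zeroth-order term produces $P^+_{\gamma'}$, the first-order term produces $dP^+_{\gamma'}(\gamma-\gamma')$ (by the formula above and the linearity of $h\mapsto W_h$), and the last term is exactly $P^+_\gamma-P^+_{\gamma'}-dP^+_{\gamma'}(\gamma-\gamma')$. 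It therefore remains to bound, in $\mathcal B(\mathcal H)$,
\[
\frac{\alpha^2}{2\pi}\;|\D|^{1/2}\!\int_{\mathbb R}R(\omega)\,W_{\gamma-\gamma'}\,R(\omega)\,W_{\gamma-\gamma'}\,R_\gamma(\omega)\,d\omega .
\]

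The plan for this estimate is to insert the weights $|\D|^{\pm1/2}$ (and one factor $|\D|^{-1}$, whose norm is $c^{-2}$) between the five operators, keeping enough negative weight next to the resolvents that the $\omega$-integral stays absolutely convergent — indeed a block $R(\omega)\,W_{\gamma-\gamma'}\,R(\omega)$ decays like $|\omega|^{-2}$ once the $|\D|$-form bound on $W_{\gamma-\gamma'}$ is spent against only part of the weights. One then estimates: the two copies of $W_{\gamma-\gamma'}$, by $\tfrac{\pi}{2c}\,\lambda_0^{-1/2}\|\gamma-\gamma'\|_{X_c}$ (up to the sharp constant), using $\tfrac1{|x|}\le\tfrac{\pi}{2c}|\D|$ and the trace-class bounds of the appendix; the weighted resolvents, by $(1-\kappa)^{-1/2}\lambda_0^{-1/2}$ times an $\omega$-integrable profile, via the factorisation $|\D|^{-1/2}(\D_{\gamma'}-i\omega)|\D|^{-1/2}=(\Lambda^+-\Lambda^-)-i\omega|\D|^{-1}+|\D|^{-1/2}(-V+\alpha W_{\gamma'})|\D|^{-1/2}$ and a Neumann series; and the remaining scalar integral over the gap of width $\sim c^2\lambda_0$. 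Tracking the powers of $c$ — $\alpha^2=\alpha_c^2c^2$, two factors $c^{-1}$ from the Kato inequalities, the factor $c^{-2}=\||\D|^{-1}\|$, and one further factor $c^{-1}$ from the $\omega$-integral — and the constants (two factors $\pi/2$ from Kato and a factor $1/2$ from the $\omega$-integral, combining to $\pi^2/8$) then yields \eqref{eq:P-P-dP}.

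The routine ingredients are the algebraic identity, the resolvent expansion, and the differentiation under the integral. The main obstacle is the last estimate: one must distribute the weights $|\D|^{1/2}$ through the non-commuting resolvent and interaction factors so that the $\omega$-integral converges while the dependence on $1-\kappa$, $\lambda_0$ and $c$ comes out exactly as stated, and one must evaluate the resulting scalar integral sharply enough to recover the constant $\pi^2/8$. Establishing the uniform-in-$\omega$ weighted resolvent bound with loss only $(1-\kappa)^{-1/2}$ from the Neumann series, and verifying that the $W_{\gamma-\gamma'}$ factors can be conjugated by $|\D|^{-1/2}$ with the stated constant, are the two points that require care; everything else follows from the material recalled earlier and in the appendix.
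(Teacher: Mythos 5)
Your treatment of \eqref{eq:dP} and of the structure of the remainder is the same as the paper's: differentiating $(P^+_{\gamma})^2=P^+_{\gamma}$ gives $dP^+_{\gamma}h=P^+_{\gamma}(dP^+_{\gamma}h)+(dP^+_{\gamma}h)P^+_{\gamma}$, whence both sandwiched blocks vanish, and the second-order resolvent identity along the imaginary axis expresses $P^+_{\gamma}-P^+_{\gamma'}-dP^+_{\gamma'}(\gamma-\gamma')$ as $\frac{\alpha^2}{2\pi}\int_{\mathbb{R}}R\,W_{\gamma-\gamma'}\,R\,W_{\gamma-\gamma'}\,R_{\gamma}\,dz$ up to which resolvent carries the perturbation (your first-order term should carry a minus sign, $R_\gamma=R-\alpha RW_{\gamma-\gamma'}R_\gamma$, but that is harmless). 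Up to this point you are aligned with the paper.

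The gap is in the estimate itself, which is the whole content of \eqref{eq:P-P-dP}: you leave it as a plan, and the bookkeeping you sketch does not produce the stated bound. The paper does not distribute $|\D|^{\pm1/2}$ weights through the integrand and bound it pointwise in $z$; instead it tests $|\D|^{1/2}[\cdots]$ against $\phi,\psi\in\mathcal{H}$, puts the two interaction factors and the middle resolvent in operator norm using $\|W_{\gamma-\gamma'}\|_{\mathcal{B}(\mathcal{H})}\le\frac{\pi}{2c}\|\gamma-\gamma'\|_{X_c}$ (note: no $\lambda_0^{-1/2}$ there, contrary to your claim) and $\|(\D_{\gamma'}-iz)^{-1}\|_{\mathcal{B}(\mathcal{H})}\le\frac{1}{c^2\lambda_0}$, and then applies Cauchy--Schwarz in the $z$-integral to the two outer resolvents together with the exact spectral identity
\begin{equation*}
\int_{\mathbb{R}}\|(\D_{\gamma}-iz)^{-1}\chi\|_{\mathcal{H}}^{2}\,dz=\pi\,\||\D_{\gamma}|^{-1/2}\chi\|_{\mathcal{H}}^{2},
\end{equation*}
followed by $\||\D_{\gamma}|^{-1/2}|\D|^{1/2}\|_{\mathcal{B}(\mathcal{H})}\le(1-\kappa)^{-1/2}$ and $\||\D_{\gamma}|^{-1/2}\|_{\mathcal{B}(\mathcal{H})}\le c^{-1}\lambda_0^{-1/2}$. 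This yields the constant exactly: $\frac{1}{2\pi}\cdot\frac{\pi^2}{4}\cdot\pi=\frac{\pi^2}{8}$, with the powers $c^{-3}(1-\kappa)^{-1/2}\lambda_0^{-3/2}$ falling out automatically. Your arithmetic, by contrast, omits the prefactor $\frac{\alpha^2}{2\pi}$ of the remainder: ``two factors $\pi/2$ from Kato and a factor $1/2$ from the $\omega$-integral'' would give $\frac{1}{2\pi}\cdot\frac{\pi^2}{4}\cdot\frac12=\frac{\pi}{16}$, not $\frac{\pi^2}{8}$, and no justification is given for the claimed value of the scalar $\omega$-integral or for how the weights can be inserted without spoiling either integrability or the $(1-\kappa)$ and $\lambda_0$ powers. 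A pointwise-in-$\omega$ scheme can in principle be made to work, but it requires exactly the careful computation you defer; the Cauchy--Schwarz-in-$z$ identity above is the missing ingredient that makes the estimate a three-line computation with the sharp constant.
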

\begin{proof}
As $P^+_{\gamma+th}=(P^+_{\gamma+th})^2$, for any $h\in X$ we have
\[
dP^+_{\gamma}h=P^+_{\gamma}(dP^+_{\gamma}h)+(dP^+_{\gamma}h) P^+_{\gamma}.
\]
Thus,
\[
P^-_{\gamma}(dP^+_{\gamma}h)P^-_{\gamma}=0,
\]
and
\[
P^+_{\gamma}(dP^+_{\gamma}h)P^+_{\gamma}=2P^+_{\gamma}(dP^+_{\gamma}h)P^+_{\gamma},
\]
hence \eqref{eq:dP}. 

We turn now to prove \eqref{eq:P-P-dP}. 
We recall that
\[
P^+_{\gamma}-P^+_{\gamma'}=\frac{\alpha}{2\pi}\int_{-\infty}^{+\infty}(\D_{\gamma}-iz)^{-1}W_{\gamma'-\gamma}(\D_{\gamma'}-iz)^{-1}dz
\]
and
\[
dP^+_{\gamma}(\gamma-\gamma')=\frac{\alpha}{2\pi}\int_{-\infty}^{+\infty}(\D_{\gamma}-iz)^{-1}W_{\gamma'-\gamma}(\D_{\gamma}-iz)^{-1}dz.
\]
Then,
\[
P^+_{\gamma}-P^+_{\gamma'}-dP^+_{\gamma}(\gamma-\gamma')=-\frac{\alpha^2}{2\pi}\int_{-\infty}^{+\infty}(\D_{\gamma}-iz)^{-1}W_{\gamma'-\gamma}(\D_{\gamma'}-iz)^{-1}W_{\gamma'-\gamma}(\D_{\gamma}-iz)^{-1}dz.
\]
From Lemma \ref{lem:ope} for any $\phi,\psi\in \mathcal{H}$, we deduce
\begin{align}\label{eq:P-P-dP2}
    \MoveEqLeft \left(\phi,|\D|^{1/2}[P^+_{\gamma}-P^+_{\gamma'}-dP^+_{\gamma}(\gamma-\gamma')]\psi\right)\notag\\
    &\leq \frac{\alpha^2}{2\pi}\|W_{\gamma'-\gamma}\|^2_{\mathcal{B}(\mathcal{H})}\||\D_{\gamma'}|^{-1}\|_{\mathcal{B}(\mathcal{H})}\!\!\left(\int_{-\infty}^{+\infty}\|(\D_{\gamma}-iz)^{-1}|\D|^{1/2}\phi\|_{\mathcal{H}}^2 dz\right)^{1/2}\!\!\!\!\left(\int_{-\infty}^{+\infty}\|(\D_{\gamma}-iz)^{-1}\psi\|_{\mathcal{H}}^2 dz\right)^{1/2}\notag\\
    &\leq \frac{\pi^2\alpha_c^2}{8c^2}\lambda_0^{-1}\|\gamma-\gamma'\|_{X_c}^2\||\D_{\gamma}|^{-1/2}|\D|^{1/2}\phi\|_{\mathcal{H}}\||\D_{\gamma}|^{-1/2}\psi\|_{\mathcal{H}}\notag\\
    &\leq \frac{\pi^2\alpha_c^2}{8c^3}(1-\kappa)^{-1/2}\lambda_0^{-3/2}\|\gamma-\gamma'\|_{X_c}^2\|\phi\|_{\mathcal{H}}\|\psi\|_{\mathcal{H}}.
\end{align}
This proves \eqref{eq:P-P-dP}.
\end{proof}

We now turn to the
\begin{proof}[Proof of Lemma \ref{lem:t-T}]
We first prove \eqref{theta-T+}. Indeed, it suffices to prove
\begin{equation}\label{eq:inter}
    \|P^{+}_\gamma(T^n(\gamma)-T^{n-1}(\gamma))P^{+}_\gamma\|_{X_c}\leq C_{\kappa,L}(1-L)\frac{R\alpha_c^2}{c^2}\|T(\gamma)-\gamma\|_{X_c}\|T^{n-1}(\gamma)-T^{n-2}(\gamma)\|_{X_c}.
\end{equation}
Then thanks to \eqref{eq:retra},
\begin{align*} \MoveEqLeft\|P^{+}_\gamma(\theta(\gamma)-T(\gamma))P^{+}_\gamma\|_{X_c}\leq \sum_{n=2}^{+\infty}\|P^{+}_\gamma(T^n(\gamma)-T^{n-1}(\gamma))P^{+}_\gamma\|_{X_c}\leq C_{\kappa,L}\frac{R\alpha_c^2}{c^2}\|T(\gamma)-\gamma\|_{X_c}^2.
\end{align*}

\medskip

We turn to prove \eqref{eq:inter}. Let $\gamma_n=T^n(\gamma)$ and $\gamma_0=\gamma$. Then for $n\geq 2$, $\gamma_n=P^+_{\gamma_{n-1}}\gamma_{n-1}P^+_{\gamma_{n-1}}$ and $\gamma_{n-1}=P^+_{\gamma_{n-2}}\gamma_{n-1}P^+_{\gamma_{n-2}}$. Hence, for $n\geq 2$
\begin{equation}\label{eq:decom}
    \begin{aligned}
    P^+_\gamma(\gamma_n-\gamma_{n-1})P^+_\gamma&=P^+_\gamma(P^+_{\gamma_{n-1}}-P^+_{\gamma_{n-2}})P^+_{\gamma_{n-2}}\gamma_{n-1}P^+_{\gamma_{n-1}}P^+_\gamma\\
    &\quad+P^+_\gamma\gamma_{n-1}P^+_{\gamma_{n-2}}(P^+_{\gamma_{n-1}}-P^+_{\gamma_{n-2}})P^+_\gamma.
\end{aligned}
\end{equation}
For the first term on the right-hand side of \eqref{eq:decom}, we have
\begin{align*}
    P^+_\gamma(P^+_{\gamma_{n-1}}-P^+_{\gamma_{n-2}})P^+_{\gamma_{n-2}}\gamma_{n-1}P^+_{\gamma_{n-1}}P^+_\gamma=I_1+I_2
\end{align*}
where thanks to \eqref{eq:dP},
\begin{align*}
    &I_1:=P^+_{\gamma_{n-2}}(P^+_{\gamma_{n-1}}-P^+_{\gamma_{n-2}}-dP^+_{\gamma_{n-2}}(\gamma_{n-1}-\gamma_{n-2}))P^+_{\gamma_{n-2}}\gamma_{n-1}P^+_{\gamma_{n-1}}P^+_\gamma,\\
    &I_2:=(P^+_\gamma-P^+_{\gamma_{n-2}})(P^+_{\gamma_{n-1}}-P^+_{\gamma_{n-2}})P^+_{\gamma_{n-2}}\gamma_{n-1}P^+_{\gamma_{n-1}}P^+_\gamma.
\end{align*}
Then from \eqref{eq:P-P-dP} and \eqref{eq:DP}, we infer
\begin{align*}
  \|I_1\|_{X_c}&\leq \frac{(1+\kappa)^{1/2}}{(1-\kappa)^{1/2}}\|\gamma_{n-1}P^+_{\gamma_{n-1}}P^+_\gamma|\D|^{1/2}\|_{\sigma_1}\||\D|^{1/2}(P^+_{\gamma_{n-1}}-P^+_{\gamma_{n-2}}-dP^+_{\gamma_{n-2}}(\gamma_{n-1}-\gamma_{n-2}))\|_{\mathcal{B}(\mathcal{H})}\\
    &\leq\frac{\pi^2(1+\kappa)^{3/2}}{8(1-\kappa)^{2}\lambda_0^{3/2}}\frac{\alpha_c^2}{c^3}\|\gamma_{n-1}-\gamma_{n-2}\|_{X_c}^2\|\gamma_{n-1}|\D|^{1/2}\|_{\sigma_1}.
\end{align*}
According to Lemma \ref{lem:retra}, $\gamma_{n-1}\in \mathcal{U}_R$ since $\gamma\in \mathcal{U}_R$ and $T$ maps $\mathcal{U}_R$ into $\mathcal{U}_R$, and for any $n\geq 2$,
\[
\|\gamma_{n-1}-\gamma_{n-2}\|_{X_c}\leq \|\gamma-T(\gamma)\|_{X_c},\quad\|\gamma_{n-1}|\D|^{1/2}\|_{\sigma_1}\leq cR.
\]
Thus as $\kappa<1$,
\begin{align*}
    \|I_1\|_{X_c}\leq C_{\kappa,L}'\frac{R\alpha_c^2}{c^2}\|T(\gamma)-\gamma\|_{X_c}\|\gamma_{n-1}-\gamma_{n-2}\|_{X}
\end{align*}
with $C_{\kappa,L}':=\frac{\pi^2}{2(1-\kappa)^2\lambda_0^{3/2}}$. According to \eqref{eq:retra}, we have $\|\gamma_{n}-\gamma\|_{X}\leq \frac{1}{1-L}\|T(\gamma)-\gamma\|_{X_c}$. Thus, by \eqref{eq:DP} and \eqref{eq:P-P},
\begin{align*}
 \|I_2\|_{X_c}&\leq\frac{(1+\kappa)}{(1-\kappa)}\||\D|^{1/2}(P^+_\gamma-P^+_{\gamma_{n-2}})\|_{\mathcal{B}(\mathcal{H})}\||\D|^{-1/2}\|_{\mathcal{B}(\mathcal{H})}\||\D|^{1/2}(P^+_{\gamma_{n-1}}-P^+_{\gamma_{n-2}})\|_{\mathcal{B}(\mathcal{H})}\|\gamma_{n-1}|\D|^{1/2}\|_{\sigma_1}\\
&\leq\frac{\pi^2(1+\kappa)}{16(1-\kappa)^{2}\lambda_0^{3/2}}\frac{R\alpha_c^2}{c^2}\|\gamma_{n-2}-\gamma\|_{X_c}\|\gamma_{n-1}-\gamma_{n-2}\|_{X_c}\\
&\leq C_{\kappa,L}''\frac{R\alpha_c^2}{c^2}\|T(\gamma)-\gamma\|_{X_c}\|\gamma_{n-1}-\gamma_{n-2}\|_{X_c}
\end{align*}
with $C_{\kappa,L}'':=\frac{\pi^2}{8(1-\kappa)^{2}\lambda_0^{3/2}(1-L)}$. Thus,
\begin{align*}
   \MoveEqLeft \| P^+_\gamma(P^+_{\gamma_{n-1}}-P^+_{\gamma_{n-2}})P^+_{\gamma_{n-2}}\gamma_{n-1}P^+_{\gamma_{n-1}}P^+_\gamma\|_{X_c}\leq (C_{\kappa,L}'+C_{\kappa,L}'')\frac{R\alpha_c^2}{c^2}\|T(\gamma)-\gamma\|_{X_c}\|\gamma_{n-1}-\gamma_{n-2}\|_{X_c}.
\end{align*}
The term $P^+_\gamma\gamma_{n-1}P^+_{\gamma_{n-2}}(P^+_{\gamma_{n-1}}-P^+_{\gamma_{n-2}})P^+_\gamma$ in \eqref{eq:decom}
can be treated analogously:
\begin{align*}
    \| P^+_\gamma\gamma_{n-1}P^+_{\gamma_{n-2}}(P^+_{\gamma_{n-1}}-P^+_{\gamma_{n-2}})P^+_\gamma\|_{X_c}\leq (C_{\kappa,L}'+C_{\kappa,L}'')\frac{R\alpha_c^2}{c^2}\|T(\gamma)-\gamma\|_{X_c}\|\gamma_{n-1}-\gamma_{n-2}\|_{X_c}.
\end{align*}
Hence \eqref{eq:inter}. Then \eqref{theta-T+} follows with $C_{\kappa,L}:=\frac{5\pi^2}{4(1-\kappa)^2\lambda_0^{3/2}(1-L)^2}\geq  2(1-L)^{-1}(C_{\kappa,L}'+C_{\kappa,L}'')$.

We consider now the term $P^-_\gamma\theta(\gamma)P^-_\gamma$. As $\theta(\gamma)=P^+_{\theta(\gamma)}\theta(\gamma)P^+_{\theta(\gamma)}$, we have
\begin{align*}
    P^-_\gamma\theta(\gamma)P^-_\gamma= P^-_{\gamma}(P^+_{\theta(\gamma)}-P^+_{\gamma})\theta(\gamma)(P^+_{\theta(\gamma)}-P^+_{\gamma})P^-_{\gamma}.
\end{align*}
Thanks to \eqref{eq:retra}, \eqref{eq:DP} and \eqref{eq:P-P},
\begin{align*}
\| P^-_\gamma\theta(\gamma)P^-_\gamma\|_{X_c}&\leq \frac{1+\kappa}{1-\kappa}\||\D|^{1/2}(P^+_{\theta(\gamma)}-P^+_{\gamma})\|_{\mathcal{B}(\mathcal{H})}^2\|\theta(\gamma)\|_{\sigma_1}\\
    &\leq\frac{\pi^2(1+\kappa)}{16(1-\kappa)^{2}\lambda_0(1-L)^2}\frac{q\alpha_c^2}{c^2}\|T(\gamma)-\gamma\|_{X_c}^2\leq C_{\kappa,L}\frac{q\alpha_c^2}{c^2}\|T(\gamma)-\gamma\|_{X_c}^2.
\end{align*}
This ends the proof.
\end{proof}

 \subsection{Estimate on \texorpdfstring{\eqref{E-E}}{}}
 We consider now the term $T(\gamma)-\gamma$ and $P^-_{\gamma}\gamma P^-_{\gamma}$.
\begin{lemma}\label{lem:T-I}
Let $g\in\Gamma_q$ and $\gamma\in \mathcal{U}_R$. If $P^+_{g}\gamma P^+_{g}=\gamma$, we have
\begin{align}\label{eq:6.7}
    \|T(\gamma)-\gamma\|_{X_c}\leq \frac{\sqrt{2}\pi}{2(1-\kappa)\lambda_0^{1/2}}R\alpha\|\gamma-g\|_{X}
\end{align}
and
\begin{align}\label{eq:P-gamma-P}
    \|P^-_{\gamma}\gamma P^-_{\gamma}\|_{X_c}\leq \frac{\pi^2}{8(1-\kappa)^2\lambda_0}q\alpha_c^2\|g-\gamma\|_{X}^2.
\end{align}
\end{lemma}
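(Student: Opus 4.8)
The plan is to reduce both inequalities to a single operator estimate on $Q:=P^+_\gamma-P^+_g=-(P^-_\gamma-P^-_g)$, namely
\begin{equation}\label{eq:plan-Q}
\||\D|^{1/2}Q\|_{\mathcal B(\mathcal H)}\le \tfrac{\pi}{4}\,(1-\kappa)^{-1/2}\lambda_0^{-1/2}\,\alpha_c\,\|g-\gamma\|_X ,
\end{equation}
and then to exploit the constraint $P^+_g\gamma P^+_g=\gamma$ through a few algebraic identities. To prove \eqref{eq:plan-Q} I start from the resolvent representation $Q=\frac{\alpha}{2\pi}\int_{\mathbb R}(\D_\gamma-iz)^{-1}W_{g-\gamma}(\D_g-iz)^{-1}\,dz$ recalled above: I test $|\D|^{1/2}Q$ against $\phi,\psi\in\mathcal H$ (moving $|\D|^{1/2}$ onto $\phi$ by self-adjointness), apply the Cauchy--Schwarz inequality in $z$, and use $\int_{\mathbb R}(E^2+z^2)^{-1}\,dz=\pi E^{-1}$ (functional calculus with $E=|\D_\gamma|$, resp.\ $E=|\D_g|$) to reduce the two $z$-integrals to $\pi\||\D_\gamma|^{-1/2}|\D|^{1/2}\phi\|^2$ and $\pi\||\D_g|^{-1/2}\psi\|^2$. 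Estimate \eqref{eq:plan-Q} then follows from the comparison bound $\||\D_\gamma|^{-1/2}|\D|^{1/2}\|\le(1-\kappa)^{-1/2}$ and the spectral-gap bound $\||\D_g|^{-1/2}\|\le c^{-1}\lambda_0^{-1/2}$ for the mean-field operators (the same bounds used in the proof of the Proposition preceding Lemma \ref{lem:t-T}), together with the Kato-type inequality $\|W_{g-\gamma}\|_{\mathcal B(\mathcal H)}\le\frac\pi2\|g-\gamma\|_X$; this is exactly the argument behind \eqref{eq:P-P}, except that $W_{g-\gamma}$ is now measured in the $X$-norm rather than in the larger $X_c$-norm.

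Next I use $P^+_g\gamma P^+_g=\gamma$, equivalently $P^+_g\gamma=\gamma P^+_g=\gamma$. Combined with $P^-_\gamma-P^-_g=-Q$ and $P^\pm_gP^\mp_g=0$ this gives $P^-_\gamma P^+_g=-QP^+_g$ and $P^+_gP^-_\gamma=-P^+_gQ$, hence $P^-_\gamma\gamma=-Q\gamma$ and $\gamma P^-_\gamma=-\gamma Q$. Writing $T(\gamma)-\gamma=P^+_\gamma\gamma P^+_\gamma-\gamma$, inserting $\mathbbm 1=P^+_\gamma+P^-_\gamma$ on both sides of $\gamma$, and using these identities, the three ``off-diagonal'' blocks collapse to
\[
T(\gamma)-\gamma=P^+_\gamma\gamma\,Q+Q\,\gamma,\qquad P^-_\gamma\gamma P^-_\gamma=Q\,\gamma\,Q .
\]
The shape of the first identity is what will keep \eqref{eq:6.7} linear in $\|g-\gamma\|_X$: keeping $P^+_\gamma\gamma$ as a unit (rather than fully expanding to $\gamma Q+Q\gamma+Q\gamma Q$) avoids any $Q\gamma Q$-type term there.

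It then remains to estimate these $X_c$-norms by splitting each product into one $\sigma_1$-factor and bounded factors. Using $\gamma\in\mathcal U_R$ (so $\|\gamma|\D|^{1/2}\|_{\sigma_1}=\||\D|^{1/2}\gamma\|_{\sigma_1}<cR$), $\Tr\gamma\le q$, and the bound $\||\D|^{1/2}P^+_\gamma|\D|^{-1/2}\|_{\mathcal B(\mathcal H)}\le\bigl(\tfrac{1+\kappa}{1-\kappa}\bigr)^{1/2}\le\sqrt2\,(1-\kappa)^{-1/2}$ (a consequence of $(1-\kappa)|\D|\le|\D_\gamma|\le(1+\kappa)|\D|$ and $[P^+_\gamma,|\D_\gamma|]=0$), one has $\|Q\gamma\|_{X_c}\le\||\D|^{1/2}Q\|_{\mathcal B(\mathcal H)}\|\gamma|\D|^{1/2}\|_{\sigma_1}$, $\|P^+_\gamma\gamma Q\|_{X_c}\le\||\D|^{1/2}P^+_\gamma|\D|^{-1/2}\|_{\mathcal B(\mathcal H)}\||\D|^{1/2}\gamma\|_{\sigma_1}\||\D|^{1/2}Q\|_{\mathcal B(\mathcal H)}$, and $\|Q\gamma Q\|_{X_c}\le\||\D|^{1/2}Q\|_{\mathcal B(\mathcal H)}^2\|\gamma\|_{\sigma_1}$. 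Feeding in these quantities, \eqref{eq:plan-Q}, and $cR\alpha_c=R\alpha$, gives $\|T(\gamma)-\gamma\|_{X_c}\le\bigl(1+\sqrt2(1-\kappa)^{-1/2}\bigr)cR\,\||\D|^{1/2}Q\|\le 2\sqrt2\,(1-\kappa)^{-1/2}cR\,\||\D|^{1/2}Q\|$, i.e.\ \eqref{eq:6.7}, and $\|P^-_\gamma\gamma P^-_\gamma\|_{X_c}\le q\,\||\D|^{1/2}Q\|^2$, which gives \eqref{eq:P-gamma-P} after the crude bound $\tfrac1{16}(1-\kappa)^{-1}\le\tfrac18(1-\kappa)^{-2}$.

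The one genuinely delicate point is \eqref{eq:plan-Q}: one must apply the comparison bound $\||\D_\gamma|^{-1/2}|\D|^{1/2}\|\le(1-\kappa)^{-1/2}$ on the $\D_\gamma$-resolvent but the spectral-gap bound $\||\D_g|^{-1/2}\|\le c^{-1}\lambda_0^{-1/2}$ on the $\D_g$-resolvent (this asymmetry is what yields the single power $\lambda_0^{-1/2}$), and measure $W_{g-\gamma}$ in the $X$-norm, not in the larger $X_c$-norm. Everything else is routine: the operator identities hold once the order of the projectors is respected, and the $X_c$-estimates are Hölder-type inequalities for Schatten classes.
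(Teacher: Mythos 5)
Your proof is correct and takes essentially the same route as the paper: both reduce everything to the projector bound $\||\D|^{1/2}(P^+_{\gamma}-P^+_{g})\|_{\mathcal{B}(\mathcal{H})}\leq a(\alpha,c)\|\gamma-g\|_{X}$ (which the paper simply invokes as \eqref{eq:P-P} from the appendix, while you re-derive it by the same resolvent-integral argument the paper uses elsewhere), then use $P^+_{g}\gamma P^+_{g}=\gamma$ to express $T(\gamma)-\gamma$ linearly and $P^-_{\gamma}\gamma P^-_{\gamma}$ quadratically in $Q=P^+_{\gamma}-P^+_{g}$, and conclude by Schatten--H\"older estimates with $\|\gamma|\D|^{1/2}\|_{\sigma_1}\leq cR$, $\Tr\gamma\leq q$ and \eqref{eq:DP}. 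Your algebraic arrangement ($Q\gamma+P^+_{\gamma}\gamma Q$ and $Q\gamma Q$ in place of the paper's $(P^+_{\gamma}-P^+_{g})\gamma P^+_{\gamma}+P^+_{g}\gamma(P^+_{\gamma}-P^+_{g})$ and $P^-_{\gamma}(P^+_{g}-P^+_{\gamma})\gamma(P^+_{g}-P^+_{\gamma})P^-_{\gamma}$) is an equivalent rewriting and yields the stated constants.
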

\begin{proof}
Indeed, we have
\[
T(\gamma)-\gamma=(P^+_{\gamma}-P^+_{g})\gamma P^+_{\gamma}+P^+_g \gamma (P^+_{\gamma}-P^+_g).
\]
Using \eqref{eq:DP} and \eqref{eq:P-P} again, as $\kappa<1$,
\begin{align*}
\|T(\gamma)-\gamma\|_{X_c}&\leq \frac{2(1+\kappa)^{1/2}}{(1-\kappa)^{-1/2}}\||\D|^{1/2}(P^+_\gamma-P^+_g)\|_{\mathcal{B}(\mathcal{H})}\|\gamma|\D|^{1/2}\|_{\sigma_1}\\
&\leq \frac{\sqrt{2}\pi}{2(1-\kappa)\lambda_0^{1/2}}R\alpha\|\gamma-g\|_{X}.
\end{align*}

Concerning the second one, we have
\[
P^-_{\gamma}\gamma P^-_{\gamma}=P^-_{\gamma}(P^+_{g}-P^+_{\gamma})\gamma (P^+_g-P^+_{\gamma})P^-_\gamma.
\]
Then
\begin{align*}
 \|P^-_{\gamma}(T(\gamma)-\gamma)P^-_{\gamma}\|_{X_c}&\leq \frac{1+\kappa}{1-\kappa}\||\D|^{1/2}(P^+_{g}-P^+_{\gamma})\|_{\mathcal{B}(\mathcal{H})}^2\|\gamma\|_{\sigma_1}\\
    &\leq \frac{\pi^2}{8(1-\kappa)^2\lambda_0}q\alpha_c^2\|g-\gamma\|_{X}^2.
\end{align*}
This ends the proof.
\end{proof}
Inserting this lemma into Lemma \ref{lem:error-bound}, we can get immediately
\begin{align*}
     |E(\gamma)-\mathcal{E}(\gamma)|&\leq C_{\kappa,L} \frac{(48+8\pi) R + (48+3\pi^2 C_{\kappa,L}^{-1}) q}{8(1-\kappa)^2\lambda_0}\alpha_c^2\|g-\gamma\|_X^2\\
     &\leq C_{\kappa,L} \frac{(6+\pi)(R+q)\alpha_c^2}{(1-\kappa)^2\lambda_0}\|g-\gamma\|_X^2 \leq \frac{5\pi^2(6+\pi)}{4(1-\kappa)^4\lambda_0^{5/2}(1-L)^2}( R+q)\alpha_c^2\|g-\gamma\|_X^2.
\end{align*}
Here we use the fact that $R< \frac{1}{2a(\alpha,c)}\leq \frac{2}{\pi\alpha_c}$ and $C_{\kappa,L}^{-1}\leq \frac{4}{5\pi^2}$. This gives \eqref{E-E}.

\subsection{Estimate on \texorpdfstring{\eqref{E-E'}}{}}

In the above proof of \eqref{E-E}, one of the most important ingredients is Eqn. \eqref{eq:P-P}, i.e.,
\begin{align*}
        \||\D|^{1/2}(P^+_{\gamma}-P^+_{\gamma'})\|_{\mathcal{B}(\mathcal{H})}\leq a(\alpha,c)\|\gamma-\gamma'\|_X\leq \frac{a(\alpha,c)}{c}\|\gamma-\gamma'\|_{X_c}.
    \end{align*}
In order to get a better estimate on the error bound of the DF energy and the DF functional, we study the estimate on $ \||\D|^{1/2}(P^+_{\gamma}-P^+_{\gamma'})\|_{\mathcal{B}(\mathcal{H})}$ more delicately under the condition $g,\gamma\in Y$.

Before going further, we need the following.
\begin{lemma}\label{lem:6.5}
Let $h\in Y$ and $\gamma\in \Gamma_q$, then for any $u\in \mathcal{H}$,
\begin{align}\label{eq:DW}
    \|[W_{h},\D_{\gamma}-c^2\beta]u\|_{\mathcal{H}}\leq 16c(1+\kappa)\|h\|_Y\|u\|_{\mathcal{H}}.
\end{align}
\end{lemma}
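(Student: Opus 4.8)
The plan is to expand the commutator $[W_h, \D_\gamma - c^2\beta]$ into pieces that can each be controlled. Write $\D_\gamma - c^2\beta = (-ic\sum_k \alpha_k \partial_k) + (c^2\beta - c^2\beta) - V + \alpha W_\gamma = -ic\sum_k \alpha_k\partial_k - V + \alpha W_\gamma$. Since $W_h$ is a multiplication-type operator composed with integral kernels (the direct term $\rho_h * \tfrac1{|\cdot|}$ commutes with $V$ and with $\beta$, and the exchange term involves the kernel $h(x,y)$), I would first observe $[W_h, V] = 0$ and $[W_h, c^2\beta]$ only survives through the exchange part because $\rho_h*\tfrac1{|\cdot|}$ is scalar; in fact the potential term $-V$ and the scalar part of $W_h$ drop out, so the commutator reduces to $-ic\sum_k[W_h,\alpha_k\partial_k] + \alpha[W_h, W_\gamma]$. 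The factor of $c$ in front of the first term is exactly what produces the $16c$ on the right-hand side, while the second term $\alpha[W_h,W_\gamma]$ must be absorbed into the constant using $\alpha_c = \alpha/c \le \kappa$ (so $\alpha \le c\kappa$), which is where the $(1+\kappa)$ appears.

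The key step is estimating $\sum_k \|[W_h, \alpha_k \partial_k] u\|_{\mathcal H}$. For the direct (scalar multiplication) part $M := \rho_h * \tfrac1{|\cdot|}$, one has $[M, \partial_k] = -(\partial_k M)$, a multiplication by $\nabla(\rho_h * \tfrac1{|\cdot|})$, which I would bound in $L^\infty$ (or via a Kato–Seiler–Simon / Hardy-type estimate) by $\|h\|_Y$; here passing from $X$ to $Y$ is precisely what gives an extra derivative's worth of regularity on $h$ and hence a bound in operator norm rather than merely in $\sigma_1$. For the exchange part $R_h u(x) = \int \tfrac{h(x,y)}{|x-y|} u(y)\,dy$, the commutator with $\partial_k$ produces a kernel involving $\partial_{x_k} h(x,y)$, $\partial_{y_k} h(x,y)$, and the derivative of $\tfrac1{|x-y|}$; each of these is controlled by $\|h\|_Y$ after using Hardy's inequality and the fact that $(1-\Delta)^{1/2}$ applied to $h$ in both variables is trace-class (so in particular the relevant kernels define bounded operators on $\mathcal H$). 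I would collect the numerical constants from these Hardy/Sobolev estimates to arrive at the clean bound $16$.

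The main obstacle I anticipate is bookkeeping the exchange-term commutator carefully: unlike the scalar part, $R_h$ does not commute with $\beta$ either (the matrix structure of $h(x,y)$ mixes components), so I must check that $[R_h, c^2\beta]$ is genuinely $O(c)$ and not $O(c^2)$ — this should follow because the $\beta$-commutator of the kernel, $[h(x,y),\beta]$, has the same $Y$-regularity as $h$ itself and the $c^2$ is not actually present (the $c^2\beta$ term is subtracted off in the statement), so only the $-ic\sum_k\alpha_k\partial_k$ piece and the $\alpha W_\gamma$ piece contribute; I would double-check via the explicit block form (as done for $[\gamma_*,\beta]$ in the proof of Proposition \ref{prop:approx}) that no hidden $c^2$ survives. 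Once that structural point is settled, the rest is assembling Hardy-type bounds with the constant tracking to get $16c(1+\kappa)\|h\|_Y\|u\|_{\mathcal H}$.
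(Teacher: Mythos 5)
There is a genuine gap in your reduction: the claim $[W_h,V]=0$ is false. Only the direct part of $W_h$ (multiplication by $\rho_h*\tfrac{1}{|\cdot|}$) commutes with $V$; the exchange part, i.e.\ the integral operator $R_hu(x)=\int\frac{h(x,y)}{|x-y|}u(y)\,dy$, does not commute with the multiplication operator $V=-\mu*\tfrac1{|\cdot|}$. So your reduced commutator $-ic\sum_k[W_h,\alpha_k\partial_k]+\alpha[W_h,W_\gamma]$ is missing the term $[R_h,V]$, and this term is not negligible: in the paper it is precisely what produces the $2Z_c$ share of the factor $1+\kappa=1+2Z_c+2\alpha_cq$ (your accounting attributes the whole $(1+\kappa)$ to absorbing $\alpha[W_h,W_\gamma]$, which only explains the $\alpha_cq$ part). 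Moreover, estimating $[R_h,V]$ by attacking its kernel $\frac{h(x,y)}{|x-y|}\bigl(V(x)-V(y)\bigr)$ directly is delicate, because the piece $\int\frac{h(x,y)V(y)u(y)}{|x-y|}dy$ carries two Coulomb singularities acting on a function that is only in $L^2$. The paper sidesteps this: after decomposing $h=\sum_k\mu_k\,|\widetilde{\phi}_k\rangle\langle\widetilde{\phi}_k|$ with $\widetilde{\phi}_k=(1-\Delta)^{-1/2}\phi_k$ and $\sum_k|\mu_k|=\|h\|_Y$, it never expands the commutator of the exchange part at the kernel level, but bounds the one-sided product
\begin{align*}
\|(\D_\gamma-c^2\beta)R_hu\|_{\mathcal{H}}\leq c(1+\kappa)\,\|\nabla(R_hu)\|_{\mathcal{H}}\leq 6c(1+\kappa)\|u\|_{\mathcal{H}},
\end{align*}
using Hardy for $V$ and \eqref{eq:5.1-2} for $W_\gamma$, and then recovers the reversed order from $\|A\|_{\mathcal{B}(\mathcal{H})}=\|A^*\|_{\mathcal{B}(\mathcal{H})}$.

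A related soft spot: absorbing $\alpha[W_h,W_\gamma]$ ``using $\alpha\leq c\kappa$'' presupposes a bound $\|[R_h,W_\gamma]\|_{\mathcal{B}(\mathcal{H})}\lesssim q\|h\|_Y$, which does not follow from an operator-norm bound on $W_\gamma$ alone (for $\gamma\in\Gamma_q$ one only controls $\|\gamma\|_{\sigma_1}\leq q$, while \eqref{eq:5.1-1} involves $\|\gamma\|_X$, which is not uniformly bounded on $\Gamma_q$); it does follow by the same mechanism as above, namely $\|W_\gamma R_hu\|_{\mathcal{H}}\leq 2q\|\nabla(R_hu)\|_{\mathcal{H}}$ from \eqref{eq:5.1-2} plus the adjoint identity. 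Your treatment of the scalar part and of $[R_h,-ic\sum_k\alpha_k\partial_k]$ (kernel derivatives plus Hardy, with the rank-one decomposition delivering $\|h\|_Y$), and your eventual observation that no $c^2\beta$ commutator actually appears, are in line with the paper; once you reinstate $[R_h,V]$ and handle it and $[R_h,W_\gamma]$ by the one-sided-plus-adjoint argument (or an equivalent), the proof closes with the stated constant.
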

\begin{proof}
As $h\in Y$, the term $(1-\Delta)^{1/2}h (1-\Delta)^{1/2}$ can be written as
\begin{align*}
    (1-\Delta)^{1/2}h (1-\Delta)^{1/2}=\sum_{k=1}^\infty\mu_k\left|\phi_k\right>\left<\phi_k\right|
\end{align*}
where $(\phi_k)_{k\geq 1}$ is an orthonormal basis on $\mathcal{H}$, $\mu_k\in\mathbb{R}$ and $\sum_{j=1}^\infty|\mu_n|=\|h\|_Y$. Then
\begin{align*}
    h=\sum_{k=1}^\infty \mu_k\left|\widetilde{\phi}_k\right>\left<\widetilde{\phi}_k\right|,
\end{align*}
with $\widetilde{\phi}_k=(1-\Delta)^{-1/2}\phi_k$. It suffices to show that for any $k\geq 1$, $\|[W_{\left|\widetilde{\phi}_k\right>\left<\widetilde{\phi}_k\right|},\D_{\gamma}]u\|_{\mathcal{H}}\leq 10(1+2\kappa)\|u\|_{\mathcal{H}}$.

We write $W_{\gamma}=W_{1,\gamma}+W_{2,\gamma}$ where for any $u\in \mathcal{H}$,
\begin{align*}
    W_{1,\gamma}u=\rho_{\gamma}*Wu=\int_{\mathbb{R}^3}\frac{\rho_{\gamma}(x-y)u(x)}{|y|}dy,\quad W_{2,\gamma}u=\int_{\mathbb{R}^3}\frac{\gamma(x,y)u(y)}{|x-y|}dy.
\end{align*}
Then $[W_{\left|\widetilde{\phi}_k\right>\left<\widetilde{\phi}_k\right|},\D_{\gamma}-c^2\beta]=[W_{1,\left|\widetilde{\phi}_k\right>\left<\widetilde{\phi}_k\right|},\D_{\gamma}-c^2\beta]+[W_{2,\left|\widetilde{\phi}_k\right>\left<\widetilde{\phi}_k\right|},\D_{\gamma}-c^2\beta]$. We study them separately. For the term $[W_{1,\left|\widetilde{\phi}_k\right>\left<\widetilde{\phi}_k\right|},\D_{\gamma}]$, we have
\begin{align*}
    [W_{1,\left|\widetilde{\phi}_k\right>\left<\widetilde{\phi}_k\right|},\D_{\gamma}-c^2\beta]u=ic[\sum_{j=1}^3\alpha_j\partial_j, W_{1,\left|\widetilde{\phi}_k\right>\left<\widetilde{\phi}_k\right|}]u+\alpha[W_{1,\left|\widetilde{\phi}_k\right>\left<\widetilde{\phi}_k\right|},W_{2,\gamma}]u.
\end{align*}
By Hardy inequality,
\begin{align*}
   \|[\sum_{j=1}^3\alpha_j\partial_j, W_{1,\left|\widetilde{\phi}_k\right>\left<\widetilde{\phi}_k\right|}]u\|_{\mathcal{H}}&= \|\int\frac{[\sum_{j=1}^3\alpha_j\partial_j |\widetilde{\phi}_k|^2](x-y)u(x)}{|y|}dy\|_{\mathcal{H}}\\
    &\leq 2\|\nabla\widetilde{\phi}_k\|_{\mathcal{H}}\left\|\left(\int|y|^{-2}|\widetilde{\phi}_k|^2(\cdot-y)dy\right)^{1/2}\!\!u \right\|_{\mathcal{H}}\leq 4\|\nabla \widetilde{\phi}_k\|_{\mathcal{H}}^2\|u\|_{\mathcal{H}}\leq 4\|u\|_{\mathcal{H}}.
\end{align*}
On the other hand, we also have
\begin{align*}
    \left|\left<v,[W_{1,\left|\widetilde{\phi}_k\right>\left<\widetilde{\phi}_k\right|},W_{2,\gamma}]u\right>\right|&=\left|\;\iiint\limits_{\mathbb{R}^{3\times 3}}\int\limits_{t\in [0,1]}\!\!\!\!\!\! \frac{\nabla|\widetilde{\phi}_k|^2(y+t(x-y)-z)\cdot (x-y)}{|z|}\frac{v^*(x)\gamma(x,y)u(y)}{|x-y|}\;dtdxdydz\right|\\
    &\leq 4\|\nabla \widetilde{\phi}_k\|_{\mathcal{H}}^2\iint_{\mathbb{R}^{3\times 2}}(|v|(x)\rho_{\gamma}^{1/2}(y))(|u|(y)\rho_{\gamma}^{1/2}(x))dxdy\leq 4q\|u\|_{\mathcal{H}}\|v\|_{\mathcal{H}}.
\end{align*}
Thus,
\begin{align}\label{eq:W1}
     \|[W_{1,\left|\widetilde{\phi}_k\right>\left<\widetilde{\phi}_k\right|},\D_{\gamma}-c^2\beta]u\|_{\mathcal{H}}\leq 4c(1+\alpha_c q)\|u\|_{\mathcal{H}}\leq 4c(1+\kappa)\|u\|_{\mathcal{H}}.
\end{align}

Now we turn to the term $[W_{2,\left|\widetilde{\phi}_k\right>\left<\widetilde{\phi}_k\right|},\D_{\gamma}-c^2\beta]$. As $\|A\|_{\mathcal{B}(\mathcal{H})}=\|A^*\|_{\mathcal{B}(\mathcal{H})}$ and using \eqref{eq:5.1-2} and the Hardy inequality, we have
\begin{align*}
    \|[W_{2,\left|\widetilde{\phi}_k\right>\left<\widetilde{\phi}_k\right|},\D_{\gamma}-c^2\beta]\|_{\mathcal{B}(\mathcal{H})}\leq 2c(1+\kappa)\|\nabla W_{2,\left|\widetilde{\phi}_k\right>\left<\widetilde{\phi}_k\right|}\|_{\mathcal{B}(\mathcal{H})}+c^2\|[W_{2,\left|\widetilde{\phi}_k\right>\left<\widetilde{\phi}_k\right|},\beta]\|_{\mathcal{B}(\mathcal{H})}.
\end{align*}
Notice that
\begin{align*}
    \nabla (W_{2,\left|\widetilde{\phi}_k\right>\left<\widetilde{\phi}_k\right|}u) =\int\frac{(x-y) \widetilde{\phi}_k(x)\widetilde{\phi}_k^*(y)u(y)}{|x-y|^3}dy-\int\frac{\nabla \widetilde{\phi}_k(x)\widetilde{\phi}_k^*(y)u(y)}{|x-y|}dy.
\end{align*}
Then we have
\begin{align*}
\left|\int\frac{ v^*(x)\nabla\widetilde{\phi}_k(x)\widetilde{\phi}_k^*(y)u(y)}{|x-y|}dx dy\right|&\leq \left(\int |\nabla\widetilde{\phi}_k(x)|^2|u(y)|^2 dxdy\right)^{1/2}\left(\int \frac{|\widetilde{\phi}_k(y)|^2|v(x)|^2}{|x-y|^2}dxdy\right)^{1/2}\\
&\leq 2\|u\|_{\mathcal{H}}\|v\|_{\mathcal{H}}
\end{align*}
and
\begin{align*}
   \MoveEqLeft\left|\int\frac{(x-y) v^*(x)\widetilde{\phi}_k(x)\widetilde{\phi}_k^*(y)u(y)}{|x-y|^3}dx dy\right|\\
   &\leq \left(\int \frac{|\widetilde{\phi}_k(x)|^2|u(y)|^2}{|x-y|^2} dxdy\right)^{1/2}\left(\int \frac{|\widetilde{\phi}_k(y)|^2|v(x)|^2}{|x-y|^2}dxdy\right)^{1/2}\leq 4\|u\|_{\mathcal{H}}\|v\|_{\mathcal{H}}.
\end{align*}
Thus,
\begin{align*}
    \|\nabla (W_{2,\left|\widetilde{\phi}_k\right>\left<\widetilde{\phi}_k\right|}u)\|_{\mathcal{H}}=\|(\nabla W_{2,\left|\widetilde{\phi}_k\right>\left<\widetilde{\phi}_k\right|})u\|_{\mathcal{H}}\leq 6\|u\|_{\mathcal{H}}
\end{align*}
from which we infer
\begin{align}\label{eq:W2}
    \|[W_{2,\left|\widetilde{\phi}_k\right>\left<\widetilde{\phi}_k\right|},\D_{\gamma}-c^2\beta]\|_{\mathcal{B}(\mathcal{H})}\leq 12c(1+\kappa).
\end{align}
This and \eqref{eq:W1} gives \eqref{eq:DW}.
\end{proof}

\begin{lemma}
Assume that $\kappa<1$. Then for any $\gamma,\gamma'\in \Gamma_q$,
\begin{align}\label{eq:P-P'}
    \||\D|^{1/2}(P^+_{\gamma}-P^+_{\gamma'})\|_{\mathcal{B}(\mathcal{H})}\leq   \frac{\alpha_c}{2 c(1-\kappa)^{1/2}\lambda_0^{3/2}}\left(16(1+\kappa)\|g-\gamma\|_Y +c\|[W_{g-\gamma},\beta]\|_{\mathcal{B}(\mathcal{H})}\right).
\end{align}
\end{lemma}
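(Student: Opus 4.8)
Write $g$ for the density matrix $\gamma'$ appearing in the statement and set $h:=g-\gamma\in Y$ (if $h\notin Y$ the bound is vacuous). Introduce the segment $\sigma_t:=(1-t)g+t\gamma\in\Gamma_q$ for $t\in[0,1]$, so that, by linearity of $\gamma\mapsto W_\gamma$, $\D_{\sigma_t}=(1-t)\D_{g}+t\D_{\gamma}$. The plan rests on three ingredients: a path representation of $P^+_\gamma-P^+_g$, a commutator identity that trades $W_h$ for $[\D_{\sigma_t},W_h]$, and Lemma \ref{lem:6.5}, which controls exactly this commutator by $\|h\|_Y$ and $\|[W_h,\beta]\|_{\mathcal B(\mathcal H)}$.

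First I would use the differentiability of $\gamma\mapsto P^+_\gamma$ recalled in the excerpt, together with the Gâteaux-derivative formula $dP^+_{\sigma}(h)=-\frac{\alpha}{2\pi}\int_{\mathbb R}(\D_\sigma-iz)^{-1}W_h(\D_\sigma-iz)^{-1}\,dz$, to write (the overall sign being irrelevant for the estimate)
\[
P^+_\gamma-P^+_g=\pm\int_0^1 dP^+_{\sigma_t}(h)\,dt=\mp\frac{\alpha}{2\pi}\int_0^1\!\!\int_{\mathbb R}(\D_{\sigma_t}-iz)^{-1}W_h(\D_{\sigma_t}-iz)^{-1}\,dz\,dt,
\]
all integrals converging absolutely in $\mathcal B(\mathcal H)$ since $\|(\D_{\sigma_t}-iz)^{-1}W_h(\D_{\sigma_t}-iz)^{-1}\|_{\mathcal B(\mathcal H)}\le\|W_h\|_{\mathcal B(\mathcal H)}\big((c^2\lambda_0)^2+z^2\big)^{-1}$ by the spectral gap $|\D_{\sigma_t}|\ge c^2\lambda_0$. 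Then comes the key step: from the standard resolvent commutator identity, on the core $H^1$ and hence on $\mathcal H$ (using that $[\D_{\sigma_t},W_h]$ is bounded, which is part of Lemma \ref{lem:6.5}),
\[
(\D_{\sigma_t}-iz)^{-1}W_h(\D_{\sigma_t}-iz)^{-1}=W_h(\D_{\sigma_t}-iz)^{-2}-(\D_{\sigma_t}-iz)^{-1}[\D_{\sigma_t},W_h](\D_{\sigma_t}-iz)^{-2};
\]
since $\int_{\mathbb R}(\lambda-iz)^{-2}\,dz=0$ for every real $\lambda\neq0$, the spectral theorem gives $\int_{\mathbb R}(\D_{\sigma_t}-iz)^{-2}\,dz=0$, so the first term drops out and
\[
P^+_\gamma-P^+_g=\pm\frac{\alpha}{2\pi}\int_0^1\!\!\int_{\mathbb R}(\D_{\sigma_t}-iz)^{-1}[\D_{\sigma_t},W_h](\D_{\sigma_t}-iz)^{-2}\,dz\,dt .
\]

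Now I would apply $|\D|^{1/2}$ on the left and estimate pointwise in $(t,z)$. Using $|\D_{\sigma_t}|\ge(1-\kappa)|\D|$ one has $\||\D|^{1/2}|\D_{\sigma_t}|^{-1/2}\|_{\mathcal B(\mathcal H)}\le(1-\kappa)^{-1/2}$, and since $\sup_{\mu\ge c^2\lambda_0}\mu(\mu^2+z^2)^{-1}\le(c^2\lambda_0)^{-1}$ one gets $\||\D|^{1/2}(\D_{\sigma_t}-iz)^{-1}\|_{\mathcal B(\mathcal H)}\le(1-\kappa)^{-1/2}(c^2\lambda_0)^{-1/2}$; also $\|(\D_{\sigma_t}-iz)^{-2}\|_{\mathcal B(\mathcal H)}\le\big((c^2\lambda_0)^2+z^2\big)^{-1}$. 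With $\int_{\mathbb R}\big((c^2\lambda_0)^2+z^2\big)^{-1}dz=\pi(c^2\lambda_0)^{-1}$ and $\int_0^1 dt=1$ this yields
\[
\big\|\,|\D|^{1/2}(P^+_\gamma-P^+_g)\big\|_{\mathcal B(\mathcal H)}\le\frac{\alpha}{2}\,\frac{\sup_{t\in[0,1]}\|[\D_{\sigma_t},W_h]\|_{\mathcal B(\mathcal H)}}{(1-\kappa)^{1/2}(c^2\lambda_0)^{3/2}} .
\]
To finish, decompose $[\D_{\sigma_t},W_h]=[\D_{\sigma_t}-c^2\beta,\,W_h]+c^2[\beta,W_h]$; since $\sigma_t\in\Gamma_q$, Lemma \ref{lem:6.5} gives $\|[\D_{\sigma_t}-c^2\beta,W_h]\|_{\mathcal B(\mathcal H)}\le 16c(1+\kappa)\|h\|_Y$, while $\|h\|_Y=\|g-\gamma\|_Y$ and $\|[\beta,W_h]\|_{\mathcal B(\mathcal H)}=\|[W_{g-\gamma},\beta]\|_{\mathcal B(\mathcal H)}$, so $\sup_t\|[\D_{\sigma_t},W_h]\|_{\mathcal B(\mathcal H)}\le 16c(1+\kappa)\|g-\gamma\|_Y+c^2\|[W_{g-\gamma},\beta]\|_{\mathcal B(\mathcal H)}$. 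Since $(c^2\lambda_0)^{3/2}=c^3\lambda_0^{3/2}$ and $\alpha/(2c^2)=\alpha_c/(2c)$, substituting gives exactly \eqref{eq:P-P'}.

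The main obstacle — and essentially the only delicate point — is the bookkeeping around the identity $\int_{\mathbb R}(\D_{\sigma_t}-iz)^{-2}\,dz=0$: one must verify that every operator-valued integral appearing before and after the manipulation converges absolutely in $\mathcal B(\mathcal H)$ (the resolvent bounds above do this, since after extracting the commutator the integrand decays like $|z|^{-3}$), that the commutator identity is legitimate (valid on $H^1$ and extended by density because $[\D_{\sigma_t},W_h]$ is bounded by Lemma \ref{lem:6.5}), that Fubini applies in $(t,z)$, and that $t\mapsto P^+_{\sigma_t}$ is $C^1$ with derivative $dP^+_{\sigma_t}(h)$ so the fundamental theorem of calculus yields the path representation — all of which follow from the differentiability results for the projections and the retraction $\theta$ established in \cite{sere}. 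Once the spectral gap $|\D_{\sigma_t}|\ge c^2\lambda_0$ and the relative bound $|\D_{\sigma_t}|\ge(1-\kappa)|\D|$ are in hand, the remaining computations are routine.
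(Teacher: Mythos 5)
Your proof is correct and follows essentially the same route as the paper: the Cauchy-integral formula for $dP^+_{\sigma}$, the identity $\int_{\mathbb{R}}(\D_{\sigma}-iz)^{-2}dz=0$ to trade $W_h$ for the commutator $[\D_{\sigma},W_h]$, the bound of that commutator via Lemma \ref{lem:6.5} after splitting off $c^2\beta$, and the path representation $P^+_{\gamma}-P^+_{\gamma'}=\int_0^1 dP^+_{\gamma'+t(\gamma-\gamma')}(\gamma-\gamma')\,dt$. The only cosmetic differences are the order of these steps and that you estimate the $z$-integral directly in operator norm where the paper argues by duality as in \eqref{eq:P-P-dP2}; both yield the stated constant.
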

\begin{proof}
First of all, we recall that
\begin{align*}
    dP^+_{\gamma}h=\frac{\alpha}{2\pi}\int_{-\infty}^{+\infty}(\D_{\gamma}-iz)^{-1}W_{h}(\D_{\gamma}-iz)^{-1}dz.
\end{align*}
As $\sigma(|D_{\gamma}|)>0$, from the following identity
\begin{align*}
    \int_{-\infty}^{+\infty}(\D_{\gamma}-iz)^{-2}dz=0,
\end{align*}
we infer
\begin{align}\label{eq:dP2}
    dP^+_{\gamma}h&=\frac{\alpha}{2\pi}\int_{-\infty}^{+\infty}(\D_{\gamma}-iz)^{-1}[W_{h},(\D_{\gamma}-iz)^{-1}]dz\notag\\
    &=\frac{\alpha}{2\pi}\int_{-\infty}^{+\infty}(\D_{\gamma}-iz)^{-1}(\D_{\gamma}-iz)^{-1}[W_{h},\D_{\gamma}](\D_{\gamma}-iz)^{-1}dz
\end{align}
Proceeding as for \eqref{eq:P-P-dP2} and using \eqref{eq:dP2}, we can get
\begin{align}\label{eq:6.15}
 \||\D|^{1/2}dP^+_{\gamma}h\|_{\mathcal{B}(\mathcal{H})}&\leq \frac{\alpha_c}{2 c^2 (1-\kappa)^{1/2}\lambda_0^{3/2}}\|[W_{h},\D_{\gamma}]\|_{\mathcal{B}(\mathcal{H})}\notag\\
    &\leq \frac{\alpha_c}{2 c(1-\kappa)^{1/2}\lambda_0^{3/2}}\left(16(1+\kappa)\|h\|_Y +c\|[W_{h},\beta]\|_{\mathcal{B}(\mathcal{H})}\right).
\end{align}
To end the proof, it is easy to see that 
\begin{align*}
    P^+_{\gamma}-P^+_{\gamma'}=\int_{0}^1 dP^+_{\gamma'+t(\gamma-\gamma')}(\gamma-\gamma')dt.
\end{align*}
This and \eqref{eq:6.15} give \eqref{eq:P-P'}.
\end{proof}
Replacing Eqn. \eqref{eq:P-P} by Eqn. \eqref{eq:P-P'} in the proof of Lemma \ref{lem:T-I}, we obtain
\begin{lemma}\label{lem:T-I'}
Assume that $\kappa<1$. Let $g\in\Gamma_q$, $\gamma\in \mathcal{U}_R$ and $g,\gamma\in Y$. Then if $P^+_{g}\gamma P^+_{g}=\gamma$, we have
\begin{align}\label{eq:6.7'}
    \|T(\gamma)-\gamma\|_{X_c}\leq  \frac{\sqrt{2}\alpha_c R}{(1-\kappa)\lambda_0^{3/2}}\left(32\|g-\gamma\|_Y +c\|[W_{g-\gamma},\beta]\|_{\mathcal{B}(\mathcal{H})}\right)
\end{align}
and
\begin{align}\label{eq:P-gamma-P'}
   \|P^-_{\gamma}\gamma P^-_{\gamma}\|_{X_c}\leq \frac{\alpha_c^2 q}{2c^2(1-\kappa)^2\lambda_0^3}\left(32\|g-\gamma\|_Y +c\|[W_{g-\gamma},\beta]\|_{\mathcal{B}(\mathcal{H})}\right)^2.
\end{align}
\end{lemma}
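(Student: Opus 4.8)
The plan is to reproduce the proof of Lemma~\ref{lem:T-I} line by line, the single modification being that every use of the crude operator bound \eqref{eq:P-P} on $|\D|^{1/2}(P^+_{\gamma}-P^+_{\gamma'})$ is replaced by the refined bound \eqref{eq:P-P'}; the latter is legitimate here precisely because the hypothesis $g,\gamma\in Y$ makes Lemma~\ref{lem:6.5}, and hence \eqref{eq:P-P'}, applicable with $\gamma'=g$.

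For \eqref{eq:6.7'} I would start, exactly as before, from the identity
\[
T(\gamma)-\gamma=(P^+_{\gamma}-P^+_{g})\gamma P^+_{\gamma}+P^+_g \gamma (P^+_{\gamma}-P^+_g),
\]
valid since $P^+_{g}\gamma P^+_{g}=\gamma$. Estimating each summand in $\|\cdot\|_{X_c}$, one pulls out $\||\D|^{1/2}(P^+_{\gamma}-P^+_{g})\|_{\mathcal{B}(\mathcal{H})}$ in operator norm, controls the factors $|\D|^{\mp1/2}P^+_{\gamma}|\D|^{\pm1/2}$ by \eqref{eq:DP}, and uses $\|\gamma|\D|^{1/2}\|_{\sigma_1}\le cR$ because $\gamma\in\mathcal{U}_R$; substituting \eqref{eq:P-P'} for $\gamma'=g$ and then bounding $(1+\kappa)^{1/2}\le\sqrt{2}$, $16(1+\kappa)\le 32$ and $(1-\kappa)^{-1}\ge 1$ yields the claimed estimate. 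For \eqref{eq:P-gamma-P'} I would again use $\gamma=P^+_g\gamma P^+_g$ together with $P^-_{\gamma}P^+_{\gamma}=0$ to write
\[
P^-_{\gamma}\gamma P^-_{\gamma}=P^-_{\gamma}(P^+_{g}-P^+_{\gamma})\gamma (P^+_g-P^+_{\gamma})P^-_{\gamma},
\]
whence $\|P^-_{\gamma}\gamma P^-_{\gamma}\|_{X_c}\le \tfrac{1+\kappa}{1-\kappa}\,\||\D|^{1/2}(P^+_{g}-P^+_{\gamma})\|_{\mathcal{B}(\mathcal{H})}^2\,\Tr(\gamma)$ with $\Tr(\gamma)\le q$; inserting the square of \eqref{eq:P-P'} and simplifying the constants in the same way gives \eqref{eq:P-gamma-P'}.

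I do not expect any genuine obstacle: the entire analytic content — the resolvent expansion of $dP^+_{\gamma}$, the cancellation $\int_{-\infty}^{+\infty}(\D_{\gamma}-iz)^{-2}\,dz=0$ used to extract the commutator $[W_{g-\gamma},\D_{\gamma}]$, and the commutator bound $\|[W_{h},\D_{\gamma}-c^2\beta]u\|_{\mathcal{H}}\le 16c(1+\kappa)\|h\|_Y\|u\|_{\mathcal{H}}$ of Lemma~\ref{lem:6.5} — has already been absorbed into \eqref{eq:P-P'}. The only point demanding attention is bookkeeping: one must carry the $\|g-\gamma\|_Y$ term and the $c\|[W_{g-\gamma},\beta]\|_{\mathcal{B}(\mathcal{H})}$ term together inside a single bracket so that it squares cleanly in the second estimate, and check that the powers of $\alpha_c$, $c$, $(1-\kappa)$ and $\lambda_0$ come out as stated. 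This is routine constant-tracking rather than a new argument.
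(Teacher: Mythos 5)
Your proposal is correct and coincides with the paper's own argument: the paper obtains Lemma \ref{lem:T-I'} precisely by rerunning the proof of Lemma \ref{lem:T-I} (the same decomposition of $T(\gamma)-\gamma$ and of $P^-_\gamma\gamma P^-_\gamma$, together with \eqref{eq:DP} and $\|\gamma|\D|^{1/2}\|_{\sigma_1}\le cR$, $\Tr(\gamma)\le q$) with \eqref{eq:P-P} replaced by \eqref{eq:P-P'}. Your constant bookkeeping ($(1+\kappa)^{1/2}\le\sqrt2$, $16(1+\kappa)\le 32$) indeed reproduces \eqref{eq:6.7'} and \eqref{eq:P-gamma-P'}.
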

Inserting these two inequalities into Lemma \ref{lem:error-bound}, we infer
\begin{align*}
     |E(\gamma)-\mathcal{E}(\gamma)|\leq \frac{5(6+\pi)}{(1-\kappa)^4\lambda_0^{9/2}(1-L)^2}( R+q)\frac{\alpha^2}{c^4}\left(32\|g-\gamma\|_Y +c\|[W_{g-\gamma},\beta]\|_{\mathcal{B}(\mathcal{H})}\right)^2.
\end{align*}
This gives \eqref{E-E'}.

\appendix

\section{Some technical estimates}\label{sec:proof1}
In this section, we list some basic estimates used in this paper taken from \cite{sere}. The difference is only because of the change of units for $Z$, $\alpha$ and $c$.
\begin{lemma}\cite[Lemma 2.6]{sere}\label{lem:ope}
Let $\gamma\in X$. 
\begin{enumerate}
    \item 
    \begin{align}\label{eq:5.1-1}
    \|W_{\gamma}\|_{\mathcal{B}(\mathcal{H})}\leq \frac{\pi}{2}\|\gamma\|_X\leq \frac{\pi}{2 c}\|\gamma\|_{X_c}    
    \end{align}
    \item 
    \begin{align}\label{eq:5.1-2}
        \|W_{\gamma}u\|_{\mathcal{H}}\leq 2\|\gamma\|_{\sigma_1}\|\nabla u\|_{\mathcal{H}}\leq \frac{2\|\gamma\|_{\sigma_1}}{c}\||\D|^{1/2}u\|_{\mathcal{H}}.
    \end{align}
    \item  Let $\gamma\in \Gamma_q$ and $\kappa(\alpha,c)<1$. Then
    \begin{align}\label{eq:5.1-3}
        (1-\kappa(\alpha,c))^2|\D|^2\leq |\D_{\gamma}|^2\leq  (1+\kappa(\alpha,c))^{2}|\D|^2.
    \end{align}
As a result,
\begin{align}\label{eq:D-D}
        (1-\kappa(\alpha,c))|\D|\leq |\D_{\gamma}|\leq  (1+\kappa(\alpha,c))|\D|.
    \end{align}
\item Let $\gamma\in \Gamma_q$, we have
\begin{equation}\label{eq:DP}
    \||\D|^{1/2}P^{\pm}_{\gamma}u\|_{\mathcal{H}}\leq \frac{(1+\kappa(\alpha,c))^{1/2}}{(1-\kappa(\alpha,c))^{1/2}}\||\D|^{1/2} u\|_{\mathcal{H}}.
\end{equation}
\item Let $\gamma\in \Gamma_q$ and $\max(q,Z)<\frac{2}{\pi/2+2/\pi}$, then
\begin{align}\label{eq:5.1-5}
    \inf|\sigma(\D_{\gamma})|\geq c^2\lambda_0(\alpha,c)=c^2(1-\max(\alpha_c q,Z_c)).
\end{align}
\end{enumerate}
\end{lemma}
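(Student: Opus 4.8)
The plan is to recover each of the five estimates exactly as their counterparts in \cite[Lemma 2.6]{sere} are obtained, but carried out with the velocity of light $c$ and the coupling constants $\alpha_c=\alpha/c$, $Z_c=Z/c$ displayed explicitly rather than normalised to one. The only genuine bookkeeping is to track the powers of $c$ generated by the elementary identities $|\D|^2=c^4-c^2\Delta$, $|\D|\ge c^2$, $|\D|\ge c(-\Delta)^{1/2}$ and $|\D|\ge c(1-\Delta)^{1/2}$ (the last for $c\ge1$), which are what turn the ordinary Sobolev norms appearing in \cite{sere} into the weighted quantities $\|\cdot\|_{X_c}$ and $\||\D|^{1/2}\cdot\|_{\mathcal H}$. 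The analytic inputs are the same throughout: Kato's inequality $|x|^{-1}\le\frac\pi2(-\Delta)^{1/2}$, Hardy's inequality $|x|^{-2}\le4(-\Delta)$, the L\"owner--Heinz operator monotonicity of $t\mapsto\sqrt t$, and the positivity $W_\gamma\ge0$ for $\gamma\in\Gamma_q$.

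For \eqref{eq:5.1-1} I would split $W_\gamma=W_{1,\gamma}-W_{2,\gamma}$ into direct and exchange parts, write the sesquilinear form $\langle\psi,W_\gamma\phi\rangle$ as the exchange-antisymmetrised Coulomb pairing of the eigenfunctions of $\gamma$ on $\mathbb R^3\times\mathbb R^3$, and bound it with Kato's inequality together with the definition $\gamma\in X\Longleftrightarrow(1-\Delta)^{1/4}\gamma(1-\Delta)^{1/4}\in\sigma_1$; this gives $\|W_\gamma\|_{\mathcal B(\mathcal H)}\le\frac\pi2\|\gamma\|_X$, and the second inequality is then just $\|\gamma\|_X\le c^{-1}\|\gamma\|_{X_c}$, which follows from $|\D|^{1/2}\ge c^{1/2}(1-\Delta)^{1/4}$. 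For \eqref{eq:5.1-2} I use the same splitting in the $L^2$-norm: for the direct term, Cauchy--Schwarz and Hardy's inequality together with $\|\rho_\gamma\|_{L^1}\le\|\gamma\|_{\sigma_1}$; for the exchange term, the analogous Cauchy--Schwarz/Hardy estimate on the kernel $\gamma(x,y)/|x-y|$; the passage from $\|\nabla u\|_{\mathcal H}$ to the $c$-weighted quantity is once more a comparison of Fourier multipliers.

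Items \eqref{eq:5.1-3}, \eqref{eq:D-D} and \eqref{eq:DP} rest on a relative-bound argument. Writing $\D_\gamma=\D+U$ with $U=-V+\alpha W_\gamma$, I would estimate $\|U|\D|^{-1}\|_{\mathcal B(\mathcal H)}\le\kappa(\alpha,c)$: the attractive part is controlled by Kato, $V=\mu\ast|x|^{-1}\le\frac\pi2Z(-\Delta)^{1/2}$, whence $\|V|\D|^{-1}\|\le\frac\pi2Z_c$; the repulsive part is controlled by \eqref{eq:5.1-2} together with $\|\gamma\|_{\sigma_1}=\Tr\gamma\le q$ for $\gamma\in\Gamma_q$. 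Then $(1-\kappa)\||\D|\psi\|_{\mathcal H}\le\|\D_\gamma\psi\|_{\mathcal H}\le(1+\kappa)\||\D|\psi\|_{\mathcal H}$, and squaring gives \eqref{eq:5.1-3}, after which \eqref{eq:D-D} is operator monotonicity of the square root. Inequality \eqref{eq:DP} follows from \eqref{eq:D-D} and the fact that $P^\pm_\gamma$ commutes with $|\D_\gamma|$ with $P^\pm_\gamma|\D_\gamma|P^\pm_\gamma\le|\D_\gamma|$, through the chain $\||\D|^{1/2}P^\pm_\gamma u\|^2\le(1-\kappa)^{-1}\||\D_\gamma|^{1/2}P^\pm_\gamma u\|^2\le(1-\kappa)^{-1}\||\D_\gamma|^{1/2}u\|^2\le\tfrac{1+\kappa}{1-\kappa}\||\D|^{1/2}u\|^2$.

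The step I expect to be the real obstacle is \eqref{eq:5.1-5}: the crude relative bound of \eqref{eq:5.1-3} only yields $\inf|\sigma(\D_\gamma)|\ge c^2(1-\kappa)^{1/2}$, which is weaker than the asserted $c^2\lambda_0=c^2(1-\max(\alpha_c q,Z_c))$. To reach the sharp constant one must exploit the sign structure: $W_\gamma\ge0$ for $\gamma\in\Gamma_q$ --- which holds because $\gamma\mapsto W_\gamma$ is linear and, for a rank-one $\gamma=|u\rangle\langle u|$, a symmetrised Cauchy--Schwarz inequality shows the direct term dominates the exchange term --- so that $\D_\gamma$ is a monotone perturbation of $\D-V$, and then treat the Coulomb part $-V$ sharply by the quadratic-form/Hardy argument of \cite{sere} (using the smallness hypothesis of item (5)) to conclude $\D_\gamma^2\ge c^4\lambda_0^2$, hence $\sigma(\D_\gamma)\cap(-c^2\lambda_0,c^2\lambda_0)=\emptyset$ by self-adjointness of $\D_\gamma$. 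Besides this, the only thing to verify is that the constants $2(\alpha q+Z)$, $1-\max(\alpha q,Z)$, etc. of \cite{sere} become $\kappa=2(\alpha_c q+Z_c)$ and $\lambda_0=1-\max(\alpha_c q,Z_c)$ once the $c$'s are restored and that no stray power of $c$ is dropped in the Sobolev comparisons.
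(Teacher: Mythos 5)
You are reconstructing a lemma that the paper itself does not prove: it is imported from \cite[Lemma 2.6]{sere}, the appendix stating explicitly that the only difference is the restoration of the units in $\alpha$, $Z$, $c$. Measured against that, your treatment of items (1)--(4) is the standard route and is sound: Kato's inequality and the trace-class structure of $(1-\Delta)^{1/4}\gamma(1-\Delta)^{1/4}$ for \eqref{eq:5.1-1}, Hardy plus Minkowski for \eqref{eq:5.1-2}, the relative bound $\|(-V+\alpha W_\gamma)|\D|^{-1}\|_{\mathcal B(\mathcal H)}\le\tfrac\pi2 Z_c+2\alpha_c q\le\kappa(\alpha,c)$ followed by squaring for \eqref{eq:5.1-3}, L\"owner--Heinz for \eqref{eq:D-D}, and the commutation of $P^\pm_\gamma$ with $|\D_\gamma|$ for \eqref{eq:DP}. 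One small caveat on constants: bounding the direct and exchange parts of $W_\gamma$ separately gives $\pi\|\gamma\|_X$ rather than $\tfrac\pi2\|\gamma\|_X$ in \eqref{eq:5.1-1}; to get $\tfrac\pi2$ you should split $\gamma=\gamma_+-\gamma_-$ into its positive and negative parts and use $-\rho_{\gamma_-}*|x|^{-1}\le-W_{\gamma_-}\le W_\gamma\le W_{\gamma_+}\le\rho_{\gamma_+}*|x|^{-1}$ (Cauchy--Schwarz shows the exchange operator is dominated by the direct one when the density matrix is nonnegative), and then apply Kato to the direct potentials only.

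The genuine gap is item (5), and your sketch as written would not close it. From $W_\gamma\ge0$ you only obtain the one-sided comparison $\D_\gamma\ge\D-V$, and this cannot be upgraded to $\D_\gamma^2\ge c^4\lambda_0^2$: squaring is not operator monotone, and a lower bound on $\D_\gamma$ gives no control on the upper edge of the negative spectrum, which is exactly where the term $\alpha_c q$ in $\lambda_0=1-\max(\alpha_c q,Z_c)$ must come from. A direct quadratic-form/Hardy estimate of $(\D+U)^2$ with $|U|\le\max(\alpha q,Z)/|x|$ only yields a bound of the type $\D_\gamma^2\ge c^4\bigl(1-4\max(\alpha_cq,Z_c)^2\bigr)$, which is weaker than $c^4\lambda_0^2$ and becomes vacuous once $\max(\alpha_cq,Z_c)\ge\tfrac12$, whereas the hypothesis of item (5) allows couplings up to $2/(\pi/2+2/\pi)\approx0.9$; note also that item (5) does not assume $\kappa(\alpha,c)<1$, so no relative-bound argument can be the source of the estimate. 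That threshold is precisely the Tix--Burenkov--Evans constant, which points to the missing ingredient: the min-max characterization of the eigenvalues in the spectral gap (Talman/Dolbeault--Esteban--S\'er\'e), valid with the free projectors $\Lambda^\pm$ exactly under this smallness condition. Combining it with Newton's theorem, $-Z/|x|\le-V\le-V+\alpha W_\gamma\le\alpha q/|x|$, and monotonicity of the min-max levels in the potential, the positive spectrum of $\D_\gamma$ in the gap lies above the hydrogenic value $c^2\sqrt{1-Z_c^2}\ge c^2(1-Z_c)$, while after charge conjugation the negative spectrum lies below $-c^2\sqrt{1-(\alpha_cq)^2}\le-c^2(1-\alpha_cq)$; the two edges together give \eqref{eq:5.1-5}. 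Your argument for item (5) should therefore invoke these projected-operator/min-max results, as the source does, rather than the Hardy route.
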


Recall that $T(\gamma)=P^+_{\gamma}\gamma P^+_{\gamma}$.
\begin{lemma}\cite[Eqns. (2.13) and (2.15)]{sere}\label{lem:A2}
Assume that $\kappa(\alpha,c)<1$. Let $a(\alpha,c)=\frac{\pi}{4}\alpha_c(1-\kappa(\alpha,c))^{-1/2}\lambda_0(\alpha,c)^{-1/2}$. Then 
\begin{enumerate}
    \item For any $\gamma,\gamma'\in \Gamma_q$,
    \begin{align}\label{eq:P-P}
        \||\D|^{1/2}(P^+_{\gamma}-P^+_{\gamma'})\|_{\mathcal{B}(\mathcal{H})}\leq a(\alpha,c)\|\gamma-\gamma'\|_X\leq \frac{a(\alpha,c)}{c}\|\gamma-\gamma'\|_{X_c}.
    \end{align}
    \item For any $\gamma\in \Gamma_q$, we have $T(\gamma)\in \Gamma_q$ and
    \begin{align}\label{eq:T-T}
        \|T^2(\gamma)-T(\gamma)\|_{X_c}\leq 2a(\alpha,c)\left(\frac{1}{c}\|T(\gamma)|\D|^{1/2}\|_{\sigma_1}+\frac{a(\alpha,c) q}{2c^2}\|T(\gamma)-\gamma\|_{X_c}\right)\|T(\gamma)-\gamma\|_{X_c}.
    \end{align}
\end{enumerate}
\end{lemma}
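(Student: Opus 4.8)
The plan is to reproduce the proofs of \cite[Eqns.~(2.13) and (2.15)]{sere}, keeping the dependence on $c$ (through $\alpha_c=\alpha/c$ and $Z_c=Z/c$) visible in every constant; the only tools needed are the Cauchy integral representation of a difference of spectral projectors and the a priori bounds of Lemma~\ref{lem:ope}.

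For part~(1), I would start from the representation
\[
P^+_\gamma-P^+_{\gamma'}=\frac{\alpha}{2\pi}\int_{-\infty}^{+\infty}(\D_\gamma-iz)^{-1}\,W_{\gamma'-\gamma}\,(\D_{\gamma'}-iz)^{-1}\,dz ,
\]
which is legitimate because each $\D_\gamma$ has a spectral gap at $0$ with $\inf|\sigma(\D_\gamma)|\ge c^2\lambda_0>0$ (by \eqref{eq:5.1-5}) and because $\D_{\gamma'}-\D_\gamma=\alpha W_{\gamma'-\gamma}$, the integral converging absolutely since $W_{\gamma'-\gamma}\in\mathcal B(\mathcal H)$. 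Pairing this with $|\D|^{1/2}\phi$ on the left and $\psi$ on the right, moving $|\D|^{1/2}(\D_\gamma-iz)^{-1}$ onto $\phi$ by adjunction, applying Cauchy--Schwarz in $z$, and using the elementary identity $\int_{-\infty}^{+\infty}\|(\D_\gamma-iz)^{-1}u\|_{\mathcal H}^2\,dz=\pi\,\||\D_\gamma|^{-1/2}u\|_{\mathcal H}^2$, the bound reduces to a product of three factors: $\|W_{\gamma'-\gamma}\|_{\mathcal B(\mathcal H)}\le\frac{\pi}{2}\|\gamma-\gamma'\|_X$ from \eqref{eq:5.1-1}; $\||\D_\gamma|^{-1/2}|\D|^{1/2}\|_{\mathcal B(\mathcal H)}\le(1-\kappa)^{-1/2}$ from \eqref{eq:D-D}; and $\||\D_{\gamma'}|^{-1/2}\|_{\mathcal B(\mathcal H)}\le c^{-1}\lambda_0^{-1/2}$ again from \eqref{eq:5.1-5}. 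Multiplying these out (the two factors $\sqrt\pi$ from Cauchy--Schwarz cancelling the $1/(2\pi)$ down to $\pi/4$) gives precisely the prefactor $\frac{\pi}{4}\alpha_c(1-\kappa)^{-1/2}\lambda_0^{-1/2}=a(\alpha,c)$ in \eqref{eq:P-P}; the second inequality of \eqref{eq:P-P} is just $\|\cdot\|_X\le c^{-1}\|\cdot\|_{X_c}$, which follows directly from the definitions of the two norms.

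For part~(2), that $T(\gamma)=P^+_\gamma\gamma P^+_\gamma\in\Gamma_q$ is immediate: $0\le P^+_\gamma\gamma P^+_\gamma\le P^+_\gamma\le\mathbbm{1}$, $\Tr(P^+_\gamma\gamma P^+_\gamma)=\Tr(\gamma^{1/2}P^+_\gamma\gamma^{1/2})\le\Tr\gamma\le q$, and $T(\gamma)\in X$ since $P^+_\gamma$ is bounded on the form domain of $\D$ by \eqref{eq:DP}. For the estimate, write $\gamma_1:=T(\gamma)$ and $Q:=P^+_{\gamma_1}-P^+_\gamma$. The key point is the identity $\gamma_1=P^+_\gamma\gamma_1P^+_\gamma$, which gives $P^+_\gamma\gamma_1=\gamma_1=\gamma_1P^+_\gamma$; substituting $P^+_{\gamma_1}=P^+_\gamma+Q$ into $T^2(\gamma)-T(\gamma)=P^+_{\gamma_1}\gamma_1P^+_{\gamma_1}-\gamma_1$ then makes the cross terms telescope into
\[
T^2(\gamma)-T(\gamma)=Q\gamma_1+\gamma_1Q+Q\gamma_1Q .
\]
I would bound each summand in $\|A\|_{X_c}=\||\D|^{1/2}A|\D|^{1/2}\|_{\sigma_1}$ by the H\"older-type inequality $\|ABC\|_{\sigma_1}\le\|A\|_{\mathcal B(\mathcal H)}\|B\|_{\sigma_1}\|C\|_{\mathcal B(\mathcal H)}$, using $Q=Q^*$ and part~(1) in the form $\||\D|^{1/2}Q\|_{\mathcal B(\mathcal H)}=\|Q|\D|^{1/2}\|_{\mathcal B(\mathcal H)}\le a(\alpha,c)\|\gamma_1-\gamma\|_X\le\frac{a(\alpha,c)}{c}\|\gamma_1-\gamma\|_{X_c}$. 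This yields $\|Q\gamma_1\|_{X_c}=\|\gamma_1Q\|_{X_c}\le\frac{a(\alpha,c)}{c}\|\gamma_1|\D|^{1/2}\|_{\sigma_1}\|\gamma_1-\gamma\|_{X_c}$ and, because $\Tr\gamma_1\le q$, $\|Q\gamma_1Q\|_{X_c}\le\frac{a(\alpha,c)^2q}{c^2}\|\gamma_1-\gamma\|_{X_c}^2$; summing the three contributions and recalling $\gamma_1=T(\gamma)$ gives exactly \eqref{eq:T-T}.

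Everything here is routine bookkeeping; the one genuinely load-bearing step is the identity $\gamma_1=P^+_\gamma\gamma_1P^+_\gamma$ in part~(2), which is what collapses the expansion into the three-term form above instead of leaving an extra term of size $\|\gamma_1|\D|^{1/2}\|_{\sigma_1}\|\gamma_1-\gamma\|_{X_c}$ with a worse constant. The only other place requiring a little care is the $z$-integral in part~(1) and, throughout, distinguishing $\|\cdot\|_X$ from $\|\cdot\|_{X_c}$, i.e.\ remembering that $|\D|\ge c^2$.
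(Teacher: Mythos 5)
Your proposal is correct, and it is essentially the argument this paper relies on: the lemma itself is only quoted from S\'er\'e (Eqns.~(2.13) and (2.15) there, with the units changed), and no proof is given here, but your resolvent-integral representation with Cauchy--Schwarz in $z$ and the bounds \eqref{eq:5.1-1}, \eqref{eq:D-D}, \eqref{eq:5.1-5} is exactly the computation the paper itself performs for \eqref{eq:P-P-dP}, while your telescoping decomposition $T^2(\gamma)-T(\gamma)=Q\gamma_1+\gamma_1 Q+Q\gamma_1 Q$ based on $\gamma_1=P^+_{\gamma}\gamma_1P^+_{\gamma}$, followed by H\"older in $\sigma_1$, mirrors the manipulations in Lemma~\ref{lem:t-T}. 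The constants check out ($\frac{\alpha}{2\pi}\cdot\pi\cdot\frac{\pi}{2}\cdot(1-\kappa)^{-1/2}\cdot c^{-1}\lambda_0^{-1/2}=a(\alpha,c)$, and the three terms recombine into \eqref{eq:T-T}), so nothing is missing.
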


\section{Boundedness of the eigenfunctions.}\label{sec:B}
In this paper, we also need a priori estimates on $H^1$ norms for the eigenfunctions of the DF operators $\D_{\gamma}$ and the ep-HF operator $P^+_{g}\D_{\gamma}P^+_g$. For any wave function $\psi: \mathbb{R}^3\to \mathbb{C}^4$, we split it in blocks of upper and lower components:
\begin{align}
    \psi=\begin{pmatrix}
        \psi^{(1)}\\\psi^{(2)}
    \end{pmatrix},\quad\textrm{with}\quad \psi^{(1)},\;\psi^{(2)}:\;\mathbb{R}^3\to \mathbb{C}^2.
\end{align}
For any density matrix $\gamma\in X$, we also split its kernel $\gamma(x,y)$ in the blocks:
\begin{align}
    \gamma(\cdot,\cdot)=\begin{pmatrix}
        \gamma_{1,1} & \gamma_{1,2}\\\gamma_{2,1} &\gamma_{2,2}
    \end{pmatrix},\quad\textrm{with}\quad \gamma_{i,j}:\;\mathbb{R}^3\times \mathbb{R}^3\to \mathcal{M}_2(\mathbb{C}),\quad i,j=1,2.
\end{align}
We have the followings. 
\begin{lemma}\label{lem:unibound}
Let $\gamma\in \Gamma_q$. Let $\psi$ be a normalized eigenfunction of operator $\D_{\gamma}$ with eigenvalue $-c^2\leq \lambda\leq c^2$. Under Assumption \ref{ass:abzq}, there are constants $K$ and $K'$ independent of $\alpha$ and $c$ such that 
\begin{align}\label{eq:prop:eigen1}
    \|\psi\|_{H^1}\leq K,\quad \|\psi^{(2)}\|_{\mathcal{H}}\leq \frac{K'}{c}.
\end{align}
Furthermore, for any $\gamma'\in \Gamma_q$ satisfying $0\leq \gamma' \leq \mathbbm{1}_{(0,c^2)}(\D_{\gamma})$,
\[
\|\gamma'\|_Y\leq K^2q.
\]
\end{lemma}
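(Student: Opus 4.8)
The plan is to deduce all three bounds from one kinetic estimate, uniform over the range of Assumption~\ref{ass:abzq}: if $\phi\in H^1(\mathbb{R}^3;\mathbb{C}^4)$ with $\|\phi\|=1$ and $\|\D_\gamma\phi\|\le c^2$, then $\|\nabla\phi\|\le K_0$ for a constant $K_0$ depending only on $Z$ and $q$. To prove it, write $\D_\gamma=\D+\mathcal{V}$ with $\mathcal{V}:=-V+\alpha W_\gamma$, so that $\D\phi=\D_\gamma\phi-\mathcal{V}\phi$ and $\|\D\phi\|\le c^2+\|\mathcal{V}\phi\|$; squaring and using $\|\D\phi\|^2=\langle\phi,(c^4-c^2\Delta)\phi\rangle=c^4+c^2\|\nabla\phi\|^2$ gives
\[
c^2\|\nabla\phi\|^2\le 2c^2\|\mathcal{V}\phi\|+\|\mathcal{V}\phi\|^2,
\]
the point being that the leading $c^4$ has cancelled. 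Next, $\|\mathcal{V}\phi\|\le 2Z\|\nabla\phi\|+\tfrac{2\alpha q}{c}\||\D|^{1/2}\phi\|$ by Hardy's inequality (with $\int d\mu=Z$), by \eqref{eq:5.1-2}, and by $\|\gamma\|_{\sigma_1}\le q$; moreover $\||\D|^{1/2}\phi\|^2=\langle\phi,|\D|\phi\rangle\le(1-\kappa(\alpha,c))^{-1}\langle\phi,|\D_\gamma|\phi\rangle\le(1-\kappa(\alpha,c))^{-1}\|\D_\gamma\phi\|\le(1-\kappa(\alpha,c))^{-1}c^2$ by \eqref{eq:D-D}. Hence $\|\mathcal{V}\phi\|\le 2Z\|\nabla\phi\|+b$ with $b:=2\alpha q(1-\kappa(\alpha,c))^{-1/2}$, and with $t:=\|\nabla\phi\|$ the displayed inequality becomes
\[
t^2\Bigl(1-\tfrac{4Z^2}{c^2}\Bigr)\le 4Z\Bigl(1+\tfrac{b}{c^2}\Bigr)t+2b+\tfrac{b^2}{c^2}.
\]
Under Assumption~\ref{ass:abzq} one has $c>2Z$, so $1-4Z^2/c^2$ is bounded below by a positive constant, while $1-\kappa(\alpha,c)$ stays away from $0$ by Remark~\ref{rem:ass} (and $b\to0$ in the weak-interaction regime); thus $b$ is bounded and $t\le K_0(Z,q)$.

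I expect this uniform kinetic bound to be the main obstacle. The cheap inequality $|\D_\gamma|\ge(1-\kappa)|\D|$ only controls $H^{1/2}$, and a naive energy identity would require the potential $-V$ to be positive, which fails for the attractive $\D_\gamma$; it is exactly the cancellation of $c^4$ together with $c>2Z$ (so that Hardy's constant $2Z$ is beaten) that makes the bootstrap close. That the vectors $\phi$ in question lie in $H^1$ in the first place — so that $\|\nabla\phi\|$ and the manipulations make sense — follows from $Z/c<1/2$ under Assumption~\ref{ass:abzq}, or can be extracted from the same estimate after a routine frequency truncation.

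The first two assertions follow immediately. For a normalised eigenfunction $\psi$ of $\D_\gamma$ with $|\lambda|\le c^2$ one has $\|\D_\gamma\psi\|=|\lambda|\le c^2$, hence $\|\psi\|_{H^1}^2=1+\|\nabla\psi\|^2\le 1+K_0^2=:K^2$. For the lower component, the last two scalar equations of $\D_\gamma\psi=\lambda\psi$ read $(\lambda+c^2)\psi^{(2)}=-ic\sum_{k=1}^3\sigma_k\partial_k\psi^{(1)}+(\mathcal{V}\psi)^{(2)}$; since the relevant (positive-energy) eigenvalues satisfy $\lambda\ge c^2\lambda_0(\alpha,c)>0$ by \eqref{eq:5.1-5}, we have $\lambda+c^2\ge c^2$, so $\|\psi^{(2)}\|\le\frac1c\|\nabla\psi\|+\frac1{c^2}\|\mathcal{V}\psi\|\le K'/c$ for a constant $K'=K'(Z,q)$.

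Finally, set $\Pi:=\mathbbm{1}_{(0,c^2)}(\D_\gamma)$. Every $\phi\in\Pi\mathcal{H}$ satisfies $\|\D_\gamma\phi\|\le c^2\|\phi\|$, so the kinetic estimate — applied first to finite linear combinations of the eigenfunctions spanning $\Pi\mathcal{H}$, then extended by density and lower semicontinuity of the $H^1$-norm — gives $\|\phi\|_{H^1}\le K\|\phi\|$ on all of $\Pi\mathcal{H}$; equivalently $\Pi(1-\Delta)\Pi\le K^2\,\mathbbm{1}_{\mathcal{H}}$. If now $\gamma'\in\Gamma_q$ with $0\le\gamma'\le\Pi$, then $\gamma'=\Pi\gamma'\Pi$ and $\Tr\gamma'\le q$, whence
\[
\|\gamma'\|_Y=\Tr[(1-\Delta)\gamma']=\Tr[\Pi(1-\Delta)\Pi\,\gamma']\le\|\Pi(1-\Delta)\Pi\|_{\mathcal{B}(\mathcal{H})}\,\Tr\gamma'\le K^2q.
\]
The only new ingredient here, beyond the kinetic estimate itself, is that it holds on the whole spectral subspace $\Pi\mathcal{H}$ and not merely for individual eigenfunctions — which is precisely what lets one transfer the $H^1$ control from eigenfunctions to an arbitrary admissible density matrix $\gamma'$.
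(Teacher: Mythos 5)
Your proposal is correct and follows essentially the same route as the paper's own proof: the triangle/Hardy estimate on $\D\psi=(\lambda+V-\alpha W_{\gamma})\psi$ with the $c^4$-cancellation yields the uniform $H^1$ bound (the paper absorbs the $W_\gamma$ term via $\|W_\gamma\psi\|\le 2q\|\nabla\psi\|$ into the quadratic instead of your constant $b$, an immaterial difference), the lower block of the eigenvalue equation divided by $\lambda+c^2$ gives $\|\psi^{(2)}\|_{\mathcal{H}}\le K'/c$, and the $Y$-bound comes from the $H^1$ control on the range of $\mathbbm{1}_{(0,c^2)}(\D_\gamma)$ together with $\Tr\gamma'\le q$. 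If anything, your final step — the operator inequality $\mathbbm{1}_{(0,c^2)}(\D_\gamma)(1-\Delta)\mathbbm{1}_{(0,c^2)}(\D_\gamma)\le K^2$ applied to $\gamma'=\mathbbm{1}_{(0,c^2)}(\D_\gamma)\gamma'\mathbbm{1}_{(0,c^2)}(\D_\gamma)$ — is slightly more careful than the paper's, which writes $\gamma'$ directly as a mixture of eigenfunctions of $\D_\gamma$ even though $\gamma'$ need not commute with $\D_\gamma$.
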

\begin{proof}
The proof is essentially the same as in \cite[Lemma 7 and Theorem 3]{esteban2001nonrelativistic}. As
\begin{align}\label{eq:eigen1}
    \D_{\gamma}\psi=\lambda \psi,
\end{align}
we have
\[
\|\D \psi\|_{\mathcal{H}}=\|(\lambda+\alpha V-\alpha W_{\gamma})\psi\|_{\mathcal{H}}.
\]
Thus according to the Hardy inequality and $|\lambda|\leq c^2$,
\[
c^4\|\psi\|^2_{\mathcal{H}}+c^2\|\nabla \psi\|^2_{\mathcal{H}}\leq c^4\|\psi\|^2_{\mathcal{H}}+4\alpha(Z+q)c^2\|\nabla \psi\|_{\mathcal{H}}\|\psi\|_{\mathcal{H}}+4\alpha^2(Z+q)^2\|\nabla\psi\|^2_{\mathcal{H}}.
\]
Thus,
\[
\|\nabla \psi\|_{\mathcal{H}}\leq \left(\frac{4\alpha(Z+q)}{1-\kappa(\alpha,c)
^2}\right)^{1/2}\|\psi\|_{\mathcal{H}}.
\]
As $\|\psi\|_{\mathcal{H}}=1$ and according to Remark \ref{rem:ass}, we know there is a constant $K>0$ such that
\[
\|\psi\|_{H^1}\leq K.
\]
Let $\mathcal{L}:=-i(\sigma\cdot \nabla)$. Then \eqref{eq:eigen1} can be rewritten as
\begin{align}\label{eq:decom-Dirac}
    c\mathcal{L}\psi^{(2)}-V\psi^{(1)}+\left(\rho_{\gamma}*\frac{1}{|x|}\right)\psi^{(1)}-\int_{\mathbb{R}^3}\frac{\gamma_{1,1}(x,y)\psi^{(1)}(y)+\gamma_{1,2}(x,y)\psi^{(2)}(y)}{|x-y|}dy &= (\lambda-c^2)\psi^{(1)},\notag\\
    c\mathcal{L}\psi^{(1)}-V\psi^{(2)}+\left(\rho_{\gamma}*\frac{1}{|x|}\right)\psi^{(2)}-\int_{\mathbb{R}^3}\frac{\gamma_{2,1}(x,y)\psi^{(1)}(y)+\gamma_{2,2}(x,y)\psi^{(2)}(y)}{|x-y|}dy &= (\lambda+c^2)\psi^{(2)}.
\end{align}
Dividing by $\lambda+c^2$ the second equation of \eqref{eq:decom-Dirac}, we get
\begin{align}
   \|\psi^{(2)}\|_{\mathcal{H}}\leq \frac{c}{\lambda+c^2}\|\mathcal{L}\psi^{(1)}\|_{\mathcal{H}}+\frac{C}{\lambda+c^2}\|\psi\|_{H^1}\leq \frac{K'}{c}
\end{align}

For the second estimate, according to \eqref{eq:gamma}, we rewrite $\gamma'$ as 
\[
\gamma'=\sum_{j=1}^{+\infty} \mu_j\left|\psi_j\right>\left<\psi_j\right|
\]
where $\mu_j\geq 0$, $\sum_{j=1}^{+\infty}\mu_j=q$ and $\psi_j$ is the normalized eigenfunctions of $\D_{\gamma}$ with eigenvalues $0\leq\lambda_j\leq c^2$. Thus,
\[
\|\gamma'\|_{Y}\leq \sum_{j=1}^{+\infty}\mu_j\|\psi_j\|_{H^1}^2\leq K^2q.
\]
This ends the proof.
\end{proof}

We turn to prove the boundedness of the eigenfunctions of the DF type operator $P^+_g\D_{\gamma}P^+_g$.
\begin{lemma}\label{lem:unibound'}
Let $\gamma,g\in \Gamma_q$. Let $\psi$ be a normalized eigenfunction of operator $P^+_g\D_{\gamma}P^+_g$ with eigenvalue $-c^2\leq \lambda\leq c^2$. Under Assumption \ref{ass:abzq}, there are constants $K$ and $K'$ independent of $\alpha$ and $c$ such that
\begin{align}\label{eq:prop:eigen2}
    \|\psi\|_{H^1}\leq K,\quad \|\psi^{(2)}\|_{\mathcal{H}}\leq \frac{K'}{c}.
\end{align}
Furthermore, let $\gamma'\in \Gamma^{(g)}_q$ satisfying $0\leq \gamma'\leq \mathbbm{1}_{(0,c^2)}(P^+_g\D_{\gamma}P^+_g)$, then
\[
\|\gamma'\|_Y\leq K^2q.
\]
\end{lemma}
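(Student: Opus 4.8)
The plan is to follow the proof of Lemma~\ref{lem:unibound} almost line by line, the only new feature being that $\psi$ now solves an eigenvalue equation for the \emph{compressed} operator $P^+_g\D_\gamma P^+_g$ rather than for $\D_\gamma$ itself. First I would reduce to the relevant case $\lambda\neq 0$ (the eigenfunctions building the minimizer \eqref{eq:closed-shell} have positive eigenvalue); applying $P^-_g$ to $P^+_g\D_\gamma P^+_g\psi=\lambda\psi$ then forces $P^+_g\psi=\psi$. Writing $\D_\gamma=\D_g+\alpha W_{\gamma-g}$ (the map $\gamma\mapsto W_\gamma$ being linear), using that $P^\pm_g$ commute with $\D_g$ and that $P^-_g\psi=0$, I obtain the identity
\[
\D_\gamma\psi=\lambda\psi+\eta,\qquad \eta:=\alpha P^-_gW_{\gamma-g}\psi .
\]
By \eqref{eq:5.1-2} and $\|\gamma\|_{\sigma_1},\|g\|_{\sigma_1}\le q$,
\[
\|\eta\|_{\mathcal H}\le\alpha\|W_{\gamma-g}\psi\|_{\mathcal H}\le 2\alpha\|\gamma-g\|_{\sigma_1}\|\nabla\psi\|_{\mathcal H}\le 4\alpha q\|\nabla\psi\|_{\mathcal H},
\]
so compared with Lemma~\ref{lem:unibound} the eigenvalue equation acquires only the extra right-hand side $\eta$, of size $O(\alpha q)\|\nabla\psi\|_{\mathcal H}$.

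For the $H^1$ bound I would repeat the computation of Lemma~\ref{lem:unibound} carrying this extra term: from $\D\psi=\D_\gamma\psi+V\psi-\alpha W_\gamma\psi=\lambda\psi+V\psi-\alpha W_\gamma\psi+\eta$, the identity $\|\D\psi\|_{\mathcal H}^2=c^4\|\psi\|_{\mathcal H}^2+c^2\|\nabla\psi\|_{\mathcal H}^2$, the Hardy inequality, \eqref{eq:5.1-2} and the bound on $\|\eta\|_{\mathcal H}$, I would get
\[
c^4\|\psi\|_{\mathcal H}^2+c^2\|\nabla\psi\|_{\mathcal H}^2\le\bigl(c^2\|\psi\|_{\mathcal H}+C\alpha(Z+q)\|\nabla\psi\|_{\mathcal H}\bigr)^2,
\]
the constant $C$ absorbing the additional $4\alpha q\|\nabla\psi\|_{\mathcal H}$ coming from $\eta$. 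Exactly as in Lemma~\ref{lem:unibound}, invoking Remark~\ref{rem:ass} this yields $\|\nabla\psi\|_{\mathcal H}\le C$, hence $\|\psi\|_{H^1}\le K$ with $K$ independent of $\alpha,c$.

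Next I would treat $\psi^{(2)}$. Since $\lambda>0$ one has $\lambda+c^2\ge c^2$, so no lower spectral bound on $P^+_g\D_\gamma P^+_g$ is needed. I would decompose $\D_\gamma\psi=\lambda\psi+\eta$ into upper and lower blocks exactly as in \eqref{eq:decom-Dirac}, except that the lower equation now carries the extra source $\eta^{(2)}$ on its right-hand side. Isolating $(\lambda+c^2)\psi^{(2)}$, dividing by $\lambda+c^2\ge c^2$, and estimating the right-hand side by Hardy together with $\|\psi\|_{H^1}\le K$ and $\|\eta^{(2)}\|_{\mathcal H}\le\|\eta\|_{\mathcal H}\le 4\alpha qK$, and using $c>2Z$, $\alpha<Z/q$ from Assumption~\ref{ass:abzq}, I get
\[
\|\psi^{(2)}\|_{\mathcal H}\le\frac{c\|\nabla\psi^{(1)}\|_{\mathcal H}+C(Z+q)\|\psi\|_{H^1}+4\alpha qK}{c^2}\le\frac{K'}{c}
\]
with $K'$ independent of $\alpha,c$. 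Finally, for $\gamma'\in\Gamma^{(g)}_q$ with $0\le\gamma'\le\mathbbm{1}_{(0,c^2)}(P^+_g\D_\gamma P^+_g)$, I would diagonalise $\gamma'=\sum_j\mu_j|\psi_j\rangle\langle\psi_j|$ with $(\psi_j)$ an orthonormal family of eigenfunctions of $P^+_g\D_\gamma P^+_g$ with eigenvalues in $(0,c^2)$, $0\le\mu_j\le1$ and $\sum_j\mu_j=\Tr(\gamma')\le q$, whence $\|\gamma'\|_Y\le\sum_j\mu_j\|(1-\Delta)^{1/2}\psi_j\|_{\mathcal H}^2=\sum_j\mu_j\|\psi_j\|_{H^1}^2\le K^2q$, exactly as in Lemma~\ref{lem:unibound}.

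The only genuinely new point — and the step I expect to require the most care — is that $\eta=\alpha P^-_gW_{\gamma-g}\psi$ does not respect the block decomposition of the free Dirac operator, so one cannot literally quote \eqref{eq:decom-Dirac}. What makes this harmless is that $\eta$ is controlled in the full $\mathcal H$-norm (not merely in some weaker norm) by $O(\alpha q)\|\nabla\psi\|_{\mathcal H}$ via \eqref{eq:5.1-2}: it is then an $O(\alpha)$ correction in the $H^1$-estimate and is divided by $c^2$ in the $\psi^{(2)}$-estimate. A secondary, routine point is the inequality $\lambda+c^2\ge c^2$, which holds because the relevant eigenvalues are positive.
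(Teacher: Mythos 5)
Your proposal is correct and follows essentially the same route as the paper: after reducing to $P^+_g\psi=\psi$, you rewrite the compressed eigenvalue equation as the one treated in Lemma \ref{lem:unibound} up to a remainder controlled by $4\alpha q\|\nabla\psi\|_{\mathcal H}$ via \eqref{eq:5.1-2}, rerun the $H^1$ and lower-component estimates, and bound $\|\gamma'\|_Y$ by diagonalisation exactly as there. Writing the extra term as $\eta=\alpha P^-_g W_{\gamma-g}\psi$ relative to $\D_\gamma$ is only a regrouping of the paper's identity $P^+_g\D_\gamma P^+_g=\D_g P^+_g+\alpha P^+_g W_{\gamma-g}P^+_g$, so the two arguments coincide.
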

\begin{proof}
It is easy to see that $P^+_g\psi=\psi$. As $P^+_g\D_{\gamma}P^+_g=\D_{g}P^+_g+P^+_gW_{\gamma-g}P^+_g$, we have
\begin{align}\label{eq:eigen2}
    P^+_g\D_{\gamma}P^+_g\psi=\D_{g}\psi +P^+_gW_{\gamma-g}\psi=\lambda\psi.
\end{align}
By Lemma \ref{lem:ope},
\[
\|P^+_g W_{\gamma-g}\psi\|_{\mathcal{H}}\leq \|W_{\gamma-g}\psi\|_{\mathcal{H}}\leq 4q \|\nabla \psi\|_{\mathcal{H}}.
\]
Then,
\[
\|\D \psi\|_{\mathcal{H}}=\|(\lambda+\alpha V-\alpha W_{\gamma}-P^+_gW_{\gamma-g})\psi\|_{\mathcal{H}}.
\]
Proceeding as in the proof of Lemma \ref{lem:unibound} for \eqref{eq:eigen2}, we know that under Assumption \ref{ass:abzq}, there is a constant $K>0$ such that
\[
\|\psi\|_{H^1}\leq K,\quad \|\psi^{(2)}\|_{\mathcal{H}}\leq \frac{K'}{c},\quad \textrm{and}\quad
\|\gamma'\|_Y\leq K^2q.
\]
\end{proof}

\medskip

{\bf Acknowledgments.} L.M acknowledges support from the European Research Council (ERC) under the European Union's Horizon 2020 research and innovation program (grant agreement No. 810367). The author also wishes to thank Isabelle Catto and Eric S\'er\'e for some useful discussions. 

\medskip

\bibliographystyle{plain}
\bibliography{reference}

\end{document}